\newcommand{\ek}{{\rm e}_k}
\newcommand{\wt}{\widetilde}
\newcommand{\bea}{\begin{eqnarray}}
\newcommand{\eea}{\end{eqnarray}}
\newcommand{\bee}{\begin{eqnarray*}}
\newcommand{\eee}{\end{eqnarray*}}
\newcommand\NN{\mathbb{N}}
\newcommand\FF{\mathbf{F}}
\newcommand\RR{\mathbb{R}}
\newcommand\PP{\mathbf{P}}
\newcommand\EE{\mathcal{E}}
\newcommand\HH{\mathbf{H}}
\newcommand\JJ{\mathbf{J}}
\newcommand\yzk{y_{z_k}}
\newcommand\e{\varepsilon}
\newcommand\be{\begin{equation}}
\newcommand\ee{\end{equation}}
\newcommand\lsl{\frac{\dot \lambda}{\lambda}}
\newcommand\md{{\rm M}}
\newcommand\la{\langle}
\newcommand\ra{\rangle}
\newcommand\re{\mathrm{\, Re\,}}
\newcommand\im{\mathrm{\, Im\,}}
\newcommand\pa{\partial}
\newcommand\R{\RR}
\newcommand\lam{\lambda}
\newtheorem{theorem}{Theorem}
\newtheorem{lemma}[theorem]{Lemma}
\newtheorem{corollary}[theorem]{Corollary}
\newtheorem{proposition}[theorem]{Proposition}
\numberwithin{equation}{section}
\title[Blow up for critical NLS]{Strongly interacting blow up bubbles \\ for the mass critical NLS}
\author[Y. Martel]{Yvan Martel}
\address{CMLS, \'Ecole Polytechnique, CNRS, 91128 Palaiseau, France}
\email{yvan.martel@polytechnique.edu}
\author[P. Rapha\"el]{Pierre Rapha\"el}
\address{Universit\'e de Nice Sophia-Antipolis, Institut Universitaire de France, European Research Council, France}
\email{pierre.raphael@unice.fr}
\begin{document} 
\begin{abstract} We consider the mass critical two dimensional nonlinear Schr\"odinger equation  (NLS)
$$i\pa_tu+\Delta u+|u|^2u=0, \ \ t\in \RR,\  x\in \RR^2.$$ 
Let $Q$ denote the positive ground state solitary wave satisfying $\Delta Q-Q+Q^3=0$. We construct a new class of multi--solitary wave solutions:  given any integer $K\geq 2$, there exists a global (for $t>0$) solution $u(t)$ of (NLS) 
 that decomposes asymptotically into a sum of solitary waves centered at the vertices of a $K$--sided regular polygon and concentrating at a logarithmic rate as $t\to +\infty$ so that  the solution blows up in infinite time with the rate
 $$\|\nabla u(t)\|_{L^2}\sim |\log t|\ \ \mbox{as}\ \ t\to+\infty.$$ 
This special behavior is due to  strong interactions between the waves, in contrast with previous works on multi--solitary waves of (NLS) where interactions do not affect the blow up rate. Using the pseudo--conformal symmetry of the (NLS) flow, this yields the first example  of solution $v(t)$ of (NLS) blowing up in finite time  with a rate strictly above the pseudo--conformal one, namely, $$\|\nabla v(t)\|_{L^2}\sim \left| \frac{\log |t|}{t}\right|\ \ \mbox{as}\ \ t\uparrow 0.$$
Such solution concentrates $K$ bubbles at a point $x_0\in \RR^2$, i.e.
$|v(t)|^2\rightharpoonup K\|Q\|_{L^2}^2\delta_{x_0}$ as $t\uparrow 0.$
\end{abstract}
\maketitle


\section{Introduction}


\subsection{General setting}

We consider in this paper the mass critical two dimensional non linear Schr\"odinger equation (NLS)
\be\label{nls}
i\pa_tu+\Delta u+|u|^{2}u=0,\quad (t,x)\in \RR\times\RR^2.
\ee
It is well-known (see e.g. \cite{Ca03} and the references therein) that for any  $u_0\in H^1(\RR^2)$, there exists a unique maximal solution $u\in\mathcal{C}((-T_\star,T^\star),H^1(\RR^2))$ of~\eqref{nls} with $u(0)=u_0$. Moreover, the following blow up criterion holds
\be
\label{blowoucrit}
 T^\star<+\infty\ \ \mbox{implies}\ \ \lim_{t\uparrow T^\star}\|\nabla u(t)\|_{L^2}=+\infty.
 \ee
The mass (i.e. the $L^2$ norm) and the energy $E$ of the solution are conserved by the flow, where
\begin{equation*}
E(u)=\frac12\int_{\RR^2} |\nabla u|^2-\frac{1}{4}\int_{\RR^2} |u|^4
\end{equation*}
From a variational argument, the unique (up to symmetry) ground state solution to 
\[
\Delta Q-Q+Q^3=0, \ \ Q\in H^1(\RR^2), \ \ Q>0, \ \ \mbox{$Q$ is radially symmetric}
\]
attains the best constant $C$ in the following Gagliardo--Nirenberg inequality 
\begin{equation}\label{GN}
\forall u\in H^1(\RR^2), \ \  \|u\|_{L^4}^{{4}}\leq C\|u\|_{L^2}^{2}\|\nabla u\|_{L^2}^2
\end{equation}
(see  \cite{BL83,We83,Kw89}).
As a consequence, one has
\be
\label{gagenergy}
\forall u\in H^1(\RR^2), \ \ E(u)\geq \frac 12\|\nabla u\|_{L^2}^2\left(1- \frac{\|u\|_{L^2}^2}{\|Q\|_{L^2}^2}\right).
\ee
Together with the conservation of mass and energy and the blow up criterion~\eqref{blowoucrit}, this implies the global existence of any solution with initial data $\|u_0\|_{2}<\|Q\|_{2}$. Actually it is now known that in this case, the solution scatters i.e. behaves asymptotically in large time as a solution of the linear equation,
 see \cite{KTV,Do} and references therein.\\
We also know that  $\|u\|_{L^2}=\|Q\|_{L^2}$ corresponds to  the mass threshold for global existence since the pseudo--conformal symmetry of the (NLS) equation
\be
\label{pseduocno}
v(t,x)=\frac{1}{|t|}u\left( \frac{1}{|t|},\frac{x}{|t|}\right)e^{-i\frac{|x|^2}{4|t|}}
\ee applied to the solitary wave solution $u(t,x)=e^{it} Q(x)$  yields the existence of an explicit single bubble blow up solution $S(t)$ with minimal mass 
\be\label{defst}
S(t,x) = \frac{1}{|t|}Q\left(\frac {x}{|t|} \right)e^{-i\frac{|x|^2}{4|t|}}e^{\frac {i}{|t|}},\quad
\|S(t)\|_{L^2}= \|Q\|_{L^2}, \quad \|\nabla S(t)\|_{L^2}\mathop{\sim}_{t\sim 0^-}\frac 1{|t|}.
\ee
We refer to \cite{Ca03} for more properties of the pseudo-conformal transform.
From \cite{Me93},   minimal mass blow up solutions are {\it classified} in $H^1(\R^2)$: $$\|u(t)\|_{L^2}=\|Q\|_{L^2}\ \ \mbox{and}\ \ T^*<+\infty\ \ \mbox{imply}\ \ u\equiv S \ \mbox{up to the symmetries of the flow.}$$  Recall also the following well-known general sufficient criterion for finite time blow up: for initial data $u_0\in \Sigma=H^1\cap L^2(|x|^2dx)$, the virial identity 
\be
\label{globalvirial}
\frac{d^2}{dt^2}\int_{\RR^2} |x|^2|u(t,x)|^2dx=16E(u_0)
\ee
implies  blow up in finite time provided $E(u_0)<0$ (by~\eqref{gagenergy}, this implies necessarily $\|u_0\|_{L^2}>\|Q\|_{L^2}$).

\subsection{Single bubble blow up dynamics} 
We focus now on the case of mass slightly above the threshold, i.e.
\be
\label{condmass}
\|Q\|_{L^2}<\|u_0\|_{L^2}<\|Q\|_{L^2}+\alpha_0,  \ \ 0<\alpha_0\ll1 .
\ee 
We first recall in this context that a large class of finite time blow up solutions was constructed in \cite{BW} (see also \cite{KS}, \cite{MeRaSz14}) as weak perturbation of the minimal mass solution $S(t)$. In particular, these solutions blow up with the pseudo--conformal blow up rate  
\begin{equation}\label{conf} \|\nabla u(t)\|_{L^2}\mathop{\sim}_{t\sim T^*} {\frac{1}{T^*-t}}.
\end{equation}
Second, recall that the series of works \cite{Pe1,MeRa04,MeRa05,Ra05,MeRa05bis,MeRa06} 
provides a thorough study of the  {\it stable} blow up dynamics under condition~\eqref{condmass},
corresponding to the so called \emph{log--log} blow up regime 
\begin{equation}\label{llog} \|\nabla u(t)\|_{L^2}\mathop{\sim}_{t\sim T^*} c^* \sqrt{\frac{\log|\log(T^*-t)|}{T^*-t}}.
\end{equation}
Third, it is proved in \cite{MeRaSz14} (see also \cite{KS}) that   solutions constructed in \cite{BW} are unstable and correspond in some sense to a threshold between the above log--log blow up and scattering.

\noindent Finally, recall that under~\eqref{condmass}, a universal gap on the blow up speed  was proved in \cite{Ra05}: given a finite time blow up solution satisfying~\eqref{condmass},  either it blows up in the log--log regime~\eqref{llog}, or it blows up faster than the pseudo--conformal rate
$$\|\nabla u(t)\|_{L^2}\gtrsim \frac1{T^*-t}.$$ 
(See also \cite{An,Ba}.)
However,  the  existence of solutions blowing up strictly faster than the   conformal speed 
 is a long lasting open problem, which is equivalent, by   the pseudo--conformal symmetry~\eqref{pseduocno}, to the existence of global solutions blowing up in infinite time.

\subsection{Multi bubbles blow up dynamics} For  larger $L^2$ mass, it is conjectured (see e.g. \cite{MeRa07}) that any finite time blow up solution   concentrates at the blow up time universal quanta of mass $m_j>0$ at a finite number of points $x_k\in \RR^2$ , i.e.
 $$|u(t)|^2\rightharpoonup \sum_{k=1}^K m_k \delta_{x_k}+|u^*|^2 \ \ \hbox{as } t\uparrow T^*,$$ 
 where $u^*\in L^2$ is a (possibly zero) residual. 
 The first example of multiple point blow up solution  is given in \cite{Mmulti}: let $K\geq 1$ and let  $(x_k)_{1\leq k\leq K}$ be $K$ arbitrary distinct points of $\RR^2$,  
 there exist a finite time blow up solution $u(t)$ of~\eqref{nls} with $$
 \left\|u(t)-\sum_{k=1}^K S(t,.-x_k)\right\|_{H^1}\to 0,\quad 
 |u(t)|^2\rightharpoonup \|Q\|_{L^2}^2 \sum_{k=1}^K \delta_{x_k}\ \ \mbox{as}\ \ t\uparrow 0.$$  
 In particular, $u(t)$ blows up with the pseudo--conformal rate
 $$\|\nabla u(t)\|_{L^2}\sim \frac{1}{|t|}\ \ \hbox{as } t\uparrow 0. $$ 
Other general constructions of multi bubble blow up are provided by \cite{PlRa07,Fan}
in the context of the log--log regime. 
Observe that these works deal with {\it weak} interactions  in the sense that the blow up dynamics of each bubble is not perturbed at the main order by the presence of the other (distant) bubbles.

\subsection{Main results} 
In this paper we construct the first example of infinite time blow up solution of (NLS),
related to the  \emph{strong}   interactions of an arbitrary number $K\geq 2$ of bubbles.
As a consequence,  
using the pseudo--conformal transform, we also obtain the first example of solution blowing up in finite time strictly faster than the conformal blow up rate.
 Such a solution concentrates the $K$ bubbles at one point at the blow up time.

\begin{theorem}[Infinite time blow up]
\label{thmmain} Let $K\ge 2$ be an integer. 
There exists a solution $u\in \mathcal C([0,+\infty),\Sigma)$ of~\eqref{nls} 
which decomposes asymptotically into a  sum of $K$ solitary waves 
\be
\label{strognconv}
\left\|u(t)- e^{i\gamma(t)}\sum_{k=1}^K\frac{1}{\lam(t)}Q\left(\frac{.-x_k(t)}{\lam(t)}\right)\right\|_{H^1}\to 0,
\quad \lam(t) = \frac {1+o(1)}{\log t}  \ \ \mbox{as} \ \ t\to +\infty,
\ee 
where the translation parameters $x_k(t)$  
  converge as $t\to +\infty$ to the vertices of a $K$-sided regular polygon, and where $\gamma(t)$ is  some phase parameter.
 In particular,
\be\label{thm2}
 \|\nabla u(t)\|_{L^2} = K^{\frac 12}\|\nabla Q\|_{L^2}  (1+o(1)) \log t \ \
 \mbox{as} \ \ t\to +\infty.
\ee
\end{theorem}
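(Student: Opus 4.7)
The plan is to construct $u$ as the limit of a sequence of solutions of \eqref{nls} defined backward in time from $T_n\to+\infty$, built around a modulated $K$-bubble ansatz, with a bootstrap on the modulation parameters driven by a tailored energy--virial functional adapted to the strong interaction regime. Imposing the cyclic $\ZZ/K\ZZ$ symmetry, I would first design an approximate solution
\[
\Phi(t,x) = e^{i\gamma(t)}\sum_{k=1}^{K}\frac{1}{\lam(t)}(Q+R)\!\left(\frac{x-x_k(t)}{\lam(t)}\right),
\]
with $x_k(t)$ at the vertices of a regular $K$-gon of radius $r(t)$, and $R$ a small explicit correction to $Q$ built iteratively to reduce the formal residue. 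Substituting into \eqref{nls} and projecting on the generalized null space of the linearization around $Q$ --- spanned by $iQ$, $\Lambda Q = Q + y\cdot\nabla Q$, $\nabla Q$ and $i|y|^2 Q$ --- produces an effective ODE system for the parameters $(\lam,r,\gamma,b)$, with $b\simeq -\lam_t/\lam$. The coupling between neighboring bubbles, of order $e^{-r/\lam}$ by the exponential decay of $Q$, enters at leading order and yields a relation of the schematic form $\lam_t\sim -c_K\lam^2 e^{-r/\lam}$ with $c_K>0$ a constant specific to the $K$-gon geometry; the unique forward-in-time solution with $r(t)\to r_\infty$ is exactly $\lam(t)\sim 1/\log t$, and \eqref{thm2} then follows by almost $L^2$-orthogonality of the $K$ rescaled bubbles.

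For each $n$, I would then consider the solution $u_n$ of \eqref{nls} with final data $u_n(T_n)=\Phi(T_n)$, and decompose on a maximal backward interval $[T_0(n),T_n]$ as $u_n = \Phi_{\mathrm{mod}} + \e_n$, where the parameters of $\Phi_{\mathrm{mod}}$ are adjusted via the implicit function theorem so as to impose orthogonality of $\e_n$ to the generalized null space at each bubble. Under the bootstrap hypothesis that the parameters remain in a $\lam^L$-tube of the formal profile and $\|\e_n(t)\|_{H^1}\leq \lam(t)^L$ for a large universal $L$, the modulation equations close and reproduce the expected asymptotic behavior of the geometric parameters on a uniform time interval $[T_0,T_n]$.

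The heart of the argument is upgrading the $H^1$ control on $\e_n$. I would introduce a functional
\[
\mathcal H(t) = \frac12\int|\nabla\e_n|^2 + \frac{1}{\lam^2}\int|\e_n|^2 - \sum_{k=1}^{K}\int\!\bigl(\tfrac12\re(\overline{\Phi_k}^2\e_n^2) + |\Phi_k|^2|\e_n|^2\bigr)\,dx + \frac{b}{\lam}\int G_\lam(x)\,\im(\bar\e_n\nabla\e_n)\,dx,
\]
with $G_\lam$ a cut-off at the bubble scale. Using the Hamiltonian structure of \eqref{nls}, the orthogonality conditions, and an extension of the coercivity of $-\Delta+1-3Q^2$ on the orthogonal complement of its generalized kernel to the $K$-bubble superposition --- made available by the $\ZZ/K\ZZ$ symmetry --- one derives $\mathcal H\gtrsim \|\e_n\|_{H^1}^2/\lam^2$ together with $|\tfrac{d}{dt}\mathcal H|\lesssim \lam^{2L+\kappa}$ for some fixed $\kappa>0$. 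Backward integration from $T_n$ then improves this to $\|\e_n\|_{H^1}^2\lesssim \lam^{2L+\kappa}$, strictly beating the bootstrap. A $\Sigma$-estimate coming from the virial identity \eqref{globalvirial} applied to $u_n$ provides the needed tightness and yields uniform bounds on $[T_0,T_n]$ with $T_0$ independent of $n$.

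Finally, $H^1_{\mathrm{loc}}$ compactness combined with $\Sigma$-tightness extracts, along a subsequence, a limit $u\in \mathcal C([T_0,+\infty),\Sigma)$ solving \eqref{nls} and satisfying \eqref{strognconv}; the gradient asymptotic \eqref{thm2} then follows from the formal profile. Forward extension to $[0,T_0]$ is standard Cauchy theory in $\Sigma$. The principal obstacle is the sharp computation of the interaction integrals and the construction of a coercive modulated energy in the multi-bubble regime: the classical single-bubble spectral analysis does not transfer automatically to strongly interacting waves, and the positivity of $c_K$ together with the stability of the regular polygon configuration must be checked by an explicit computation exploiting the $\ZZ/K\ZZ$ symmetry --- this is precisely the point that separates the present construction from all previous (weakly interacting) multi-soliton results.
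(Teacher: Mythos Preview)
Your overall architecture (backward construction from $T_n\to\infty$, modulation with $\ZZ/K\ZZ$ symmetry, energy--momentum functional, compactness) matches the paper's, but the core dynamical mechanism and the bootstrap scales are both misidentified, and a genuinely needed step is missing.

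\textbf{The effective ODE is wrong.} The nearest-neighbour interaction does \emph{not} feed directly into the scaling law. Projecting the error on the scaling direction gives only $b+\dot\lambda/\lambda\approx 0$; the interaction first shows up in the \emph{mass} direction $iQ$, where it would produce a forbidden drift. The paper's key step is to take the profile $Q_a=Q+a(z)\rho$ with $L_+\rho=\tfrac{|y|^2}{4}Q$ and $a(z)=-c_a z^{1/2}e^{-\kappa z}$ chosen precisely so that this mass drift cancels at leading order. Then $a$ enters the equation for the pseudo-conformal parameter $b$ as $\dot b+b^2=a$, and the system
\[
\frac{\dot\lambda}{\lambda}=-b,\qquad \dot z=bz,\qquad \dot b+b^2=a(z)
\]
has the solution $b\sim s^{-1}\log^{-1}s$, $z\sim\tfrac2\kappa\log s$, $\lambda\sim\log^{-1}s$. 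Your relation $\lambda_t\sim -c_K\lambda^2 e^{-r/\lambda}$ is off by several orders: with $\lambda\sim1/\log t$ and $e^{-\kappa r/\lambda}\sim t^{-2}$, its right-hand side is $O(t^{-2}\log^{-2}t)$, whereas the actual $\lambda_t\sim -t^{-1}\log^{-2}t$. Without the $Q_a$ correction and the resulting $b$-equation you will not recover the $1/\log t$ rate.

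\textbf{The bootstrap scale is wrong.} The interactions and the residual $\e$ all live at scale $s^{-1}$ (equivalently $t^{-1}$ in original time), which is $e^{-1/\lambda}$, exponentially small in $1/\lambda$. A bootstrap of the form $\|\e\|_{H^1}\le\lambda^L=(\log t)^{-L}$ is far too crude: it cannot resolve the forcing $|a|\sim s^{-2}\log^{-1}s$, so neither the modulation equations nor the energy identity will close. The paper bootstraps $\|\e\|_{H^1}\le s^{-1}\log^{-3/2}s$ and obtains $|\tfrac{d}{ds}\FF|\lesssim s^{-3}\log^{-7/2}s$, which integrates.

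\textbf{A shooting argument is missing.} The estimate on the rescaled inter-bubble distance $z$ (equivalently on $\zeta=(2/\kappa c_a)^{1/2}z^{-3/4}e^{\kappa z/2}$) is \emph{not} self-improving under the bootstrap; it must be secured by a topological argument adjusting $z^{in}$ at the final time, using a transversality condition on $\xi(s)=(\zeta(s)-s)^2 s^{-2}\log s$. Without this, the backward interval on which the bootstrap holds cannot be made uniform in $n$.
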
 

\begin{corollary}[Finite time collision]
\label{maincor}
Let $u(t)\in C([0,+\infty),\Sigma)$ be given by Theorem~\ref{thmmain} and 
let $v\in \mathcal C((-\infty,0),\Sigma)$ be the pseudo conformal transform of $u(t)$ defined by~\eqref{pseduocno}.
Then $v(t)$  blows up at $T^*=0$   with
\be
\label{logmodifiy}
\|\nabla v(t)\|_{L^2}= K^{\frac 12}\|\nabla Q\|_{L^2}(1+o(1))\left| \frac{\log |t|}{t}\right|,
\quad |v|^2\rightharpoonup K\|Q\|_{L^2}^2\delta_{0}\ \ \mbox{as}\ \ t\uparrow 0.
\ee
\end{corollary}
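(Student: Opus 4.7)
The plan is to deduce Corollary~\ref{maincor} directly from Theorem~\ref{thmmain} by invoking the pseudo-conformal symmetry~\eqref{pseduocno}. Since~\eqref{pseduocno} is a bijection between $H^1$ solutions on $(1,+\infty)$ and $H^1$ solutions on $(-1,0)$, preserves the $L^2$ norm, and sends $\Sigma$ to $\Sigma$, we obtain $v\in\mathcal{C}((-\infty,0),\Sigma)$ blowing up at $T^*=0$ for free; the real content is to translate the asymptotics~\eqref{strognconv}--\eqref{thm2} on $u$ into the asserted asymptotics~\eqref{logmodifiy} on $v$.

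For the blow up rate, I would differentiate~\eqref{pseduocno} and change variables $y=x/|t|$, $s=1/|t|$ to arrive at the identity
\begin{equation*}
\|\nabla v(t)\|_{L^2}^2 = \frac{1}{t^2}\|\nabla u(s)\|_{L^2}^2 + \frac{1}{4}\bigl\|\,|y|\,u(s)\bigr\|_{L^2}^2 - \frac{1}{|t|}\int_{\RR^2} y\cdot\im\bigl(\nabla u(s,y)\,\overline{u(s,y)}\bigr)\,dy.
\end{equation*}
By~\eqref{thm2}, the first term is $K\|\nabla Q\|_{L^2}^2(1+o(1))(\log|t|)^2/t^2$, which is the claimed rate. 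The remaining two terms must be checked to be of lower order. From~\eqref{strognconv}, the bubbles concentrate at scale $\lam(s)\to 0$ around points $x_k(s)$ converging to fixed vertices $x_k^\infty$ of a regular $K$-gon, so $\bigl\|\,|y|\,u(s)\bigr\|_{L^2}^2 \to \sum_k |x_k^\infty|^2\|Q\|_{L^2}^2 = O(1)$. The momentum term is similarly harmless: the diagonal contributions vanish because $Q$ is real and the common phase $e^{i\gamma(s)}$ factors out of each bubble, while off-diagonal contributions are exponentially small in the bubble separation $|x_k^\infty-x_l^\infty|/\lam(s)\to+\infty$. Both are negligible compared with $(\log|t|)^2/t^2$, so the gradient part of~\eqref{logmodifiy} follows.

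For the mass concentration, I would exploit the pointwise identity $|v(t,x)|^2 = s^2|u(s,sx)|^2$ and the change of variables $y=sx$: for any test function $\phi\in C_c(\RR^2)$,
\begin{equation*}
\int_{\RR^2} |v(t,x)|^2\,\phi(x)\,dx = \int_{\RR^2} |u(s,y)|^2\,\phi(y/s)\,dy \;\longrightarrow\; K\|Q\|_{L^2}^2\,\phi(0)
\end{equation*}
as $s\to+\infty$, by dominated convergence combined with mass conservation, which together with~\eqref{strognconv} forces $\|u(s)\|_{L^2}^2 = K\|Q\|_{L^2}^2$.

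The proof is essentially a routine change-of-variables computation; I do not anticipate any serious obstacle. The only inputs needed beyond~\eqref{strognconv}--\eqref{thm2} are uniform-in-$s$ bounds on $\bigl\|\,|y|\,u(s)\bigr\|_{L^2}$ and on the momentum density $\int y\cdot\im(\nabla u\,\bar u)$, both of which should be direct byproducts of the modulation analysis used in the proof of Theorem~\ref{thmmain} (the construction keeps the bubbles $\Sigma$-bounded and at bounded positions for all time).
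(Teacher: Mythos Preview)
Your proposal is correct and follows essentially the same route as the paper: expand $\|\nabla v\|_{L^2}^2$ via the pseudo-conformal identity, read off the leading term from~\eqref{thm2}, and control the lower-order terms using the uniform bound $\||y|\,u(s)\|_{L^2}=O(1)$, which is indeed a byproduct of the construction (see~\eqref{Sig} and~\eqref{b:tuO}). One simplification worth noting: the paper disposes of the momentum cross term by a single Cauchy--Schwarz, $\tfrac{1}{|t|}\bigl|\int y\cdot\im(\nabla u\,\bar u)\bigr|\le \tfrac{1}{|t|}\|\nabla u(s)\|_{L^2}\,\||y|u(s)\|_{L^2}=O(|t|^{-1}|\log|t||)=o(|t|^{-2}|\log|t||^2)$, which sidesteps your bubble-by-bubble argument and the slight imprecision in saying the diagonal contributions vanish (that is exact only for the approximation in~\eqref{strognconv}, not for $u$ itself).
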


\noindent{\it Comments on the main results.}

\medskip

\noindent{\it 1. Dynamics with multiple nonlinear objects.} 
Multiple bubble solutions with \emph{weak interactions} and  asymptotically free Galilean motion  have been constructed in various settings, both in stable and unstable contexts, see in particular \cite{Mmulti,Mi03,Martelkdv,MMnls,KMR,CMM,Perelman,Boulenger,Fan}. 
As a typical example of weakly interacting dynamics, for the nonlinear Schr\"odinger equations 
\begin{equation}\label{nlsp}
i\partial_t u + \Delta u + |u|^{p-1} u = 0,\quad x\in \RR^d, \quad 1<p<1+\frac {4}{d-2},
\end{equation}
there exist   multi solitary wave solutions satisfying for large $t$,
\be
\label{multisolitary}
\left\|u(t)-\sum_{k=1}^K e^{-i \Gamma_k(t,x)} \omega_k^{\frac 1{p-1}}Q\Big(\omega_k^{\frac 12}(.-\nu_k t)\Big)
\right\|_{H^1}\lesssim e^{-\gamma t}, \ \ \gamma>0,
\ee
for any given set of parameters $\{\nu_k,\omega_k\}_k$ with the decoupling condition $\nu_k\neq \nu_{k'}$ if $k\neq k'$
(see \cite{MMnls,CMM}).

\noindent 
In \cite{KMR}, two different regimes with \emph{strong interactions} related to the two body problem of gravitation
are exhibited for the Hartree model (hyperbolic  and parabolic asymptotic motions). We  also refer to \cite{Mi03,MaMeinvent} for works related to sharp interaction problems in the setting of the  subcritical (gKdV) equation. 
We thus see the present work as the first intrusion into the study of strongly interacting non radial multi solitary wave motions for (NLS). Note that the solution given by Theorem~\ref{thmmain} is a minimal threshold dynamics and its behavior is unstable by perturbation of the data. An important direction of further investigation is the derivation of  {\it stable} strongly interacting multiple bubbles blow up dynamics.

\noindent We observe from the proof of Corollary~\ref{maincor} that the $K$ bubbles of the solution  collide at the same point at the blow up time  providing  the first example of  collision at blow up for (NLS). Note that the geometry of the  trajectories of the blow up points (straight lines from the origin to the egde of the $K$-sided regular polygon) is an essential feature of these solutions. 
A related one dimensional mechanism is involved in the derivation of degenerate blow up curves in the context of ``type I'' blow up for the wave equation, see \cite{MZ}.
For the nonlinear heat equation in one dimension, solutions for which two points of maximum collide at blow up are constructed in \cite{HV}.
There are also analogies of the present work with the construction of stationary solutions with mass concentrated along specific nonlinear grids, see \cite{MPW}.
In the context of two dimensional incompressible fluid mechanics, special solutions to the vortex point system are studied as a simplified model for dynamics of interacting and possibly colliding vortex, see for example \cite{Miot} for an overview of these problems.
 
\medskip

\noindent{\it 2. Minimal mass solutions.}
The proof of Theorem~\ref{thmmain} follows the now standard strategy of constructing {\it minimal}  dynamics by approximate solutions and compactness, initiated in \cite{Mmulti} and extended in various ways and  contexts by \cite{Martelkdv,KMR,CMM,RaSz11}.
We combine in a blow up context the approach developed for multibubble flows  in \cite{Martelkdv,KMR} and  a specific strategy to construct minimal blow up solutions for (NLS) type equations 
introduced  in \cite{RaSz11,KLR}.
A key ingredient of the proof is the precise tuning of the interactions between the waves.
In particular, we observe that the $K$ bubbles in \eqref{strognconv}  have the same phase, which is crucial in our analysis. 
The dynamics of  two symmetric bubbles with opposite phase ($\gamma_1=\gamma_2+\pi$) is related to  the dynamics of a single bubble  on a half--plane with Dirichlet boundary condition and it is known in this context  that minimal mass blow up at a boundary point (which  corresponds to the collision case) does not exists, see \cite{Ba}.

\noindent Note that we restrict ourselves to space dimension $2$ for simplicity, but similar results hold for the mass critical (NLS) equation in any space dimension with   same proof.

\noindent
For the mass subcritical ($1<p<1+\frac 4d$) and supercritical ($1+\frac 4d<p<1+\frac 4{d-2}$) 
nonlinear Schr\"odinger equations \eqref{nlsp},
we expect using similar approach the existence of  bounded  strongly interacting multi solitary waves,
with logarithmic relative distances, i.e.  non free Galilean motion. 
Interestingly enough, the existence of such solutions  is {\it ruled out} in the mass critical case  by the   virial law~\eqref{globalvirial}. The scaling instability direction of the critical case is excited by the interactions which leads to the infinite time concentration displayed in Theorem~\ref{thmmain}.

\noindent Conversely, solutions such as $u(t)$  in Theorem~\ref{thmmain} cannot exist in the sub and supercritical cases. In the subcritical case, it is well-known that all $H^1$ solutions are bounded in $H^1$ (\cite{Ca03}). In the supercritical case, any solution in $\Sigma$ that is global for $t\geq 0$ satisfies
\[
\liminf_{t\to +\infty} \|\nabla u(t)\|_{L^2} \lesssim 1.
\]
Indeed in this case,  the Virial identity $\frac {d^2}{dt^2} \int |x|^2 |u|^2 = 4d(p-1) E(u_0) - (\frac d2(p-1)-1)\int |\nabla u|^2$
integrated twice in time provides the global bound $\int_0^t \int_0^s \|\nabla u(s)\|_{L^2}^2 ds dt \lesssim t^2.$

\noindent Note that the construction of Theorem~\ref{thmmain} is performed near $t=+\infty$ (by translation invariance, it is then obvious to obtain a solution  on the time interval $[0,+\infty)$). An interesting  question is to understand the behaviour of such solutions for $t\leq 0$.

\medskip

 \noindent{\it 3. Zero energy global solutions}. From the proof of Theorem~\ref{thmmain}, the solution $u$ has zero energy.  
In \cite{MeRa06}, it is proved that any zero energy solution satisfying~\eqref{condmass} blows up in finite time with the log--log regime. Thus, in the neighborhood of  $Q$, $e^{it} Q$  is the only global zero energy solution.
For the critical (gKdV) equation, a similar result holds, though in a stronger topology (see \cite{MMR1}). 
Note that the existence of global in time zero energy solutions is strongly related to Liouville type  theorems and to blow up profile, see \cite{Nawa,MeRa04}. 
For (NLS), the only known examples of global in positive time zero energy solutions so far were the time periodic solutions $ e^{it} P$ where  $P$ is any solution to the stationary equation $\Delta P-P+P^3=0$.
Therefore, the existence of such a non trivial global (for positive time) zero energy solution $u(t)$ is surprising.
For other works related to minimal mass solutions and their key role in the dynamics of the flow, we   refer to 
 \cite{Mdu,RaSz11,BCD,MeRaSz14,DKM,MMR1,MMRII}.
 
\medskip

\noindent{\it 4. Blow up speed for \emph{(NLS)}}. The question of determining all possible blow up rates for solutions of nonlinear dispersive equations is in general intricate. 
For the (NLS) equation~\eqref{nlsp} in the  mass supercritical--energy subcritical range, a universal sharp upper bound on the blow up rate has been derived in \cite{MeRaSz14} for radial data, but no such bound exists for the mass critical problem.  
For (NLS) with a double power non linearity of the form $|u|^{p-1}u+|u|^2u$ where $1<p<3$, the minimal mass solution has a surprising blow up rate different from the conformal rate, see \cite{LeCMR}.
For the mass critical (gKdV) equation, solutions arbitrarily close to the solitary wave with arbitrarily fast blow up speed have been constructed in \cite{MMRIII}.
Recall that constructions of   blow up solutions with various blow up rate are  also available in the energy critical and super--critical context, see  \cite{KST,KS2,MRR,DHKS,Collot,JJ}. 
However, such general constructions seem by now out of reach  for the mass critical (NLS) problem. 
In this context, the derivation of the anomalous blow up speed~\eqref{logmodifiy}, in spite of its  rigidity, is 
an interesting new fact. We will see in the proof how such a blow up rate is  related to strong coupling between the solitary waves.

\subsection{Notation}
Let $\Sigma=H^1\cap L^2(|x|^2dx)$.
The $L^2$ scalar product of two complex valued functions $f,g\in L^2(\RR^2)$ is denoted by
\[
\la f,g\ra = \re\left( \int_{\RR^2} f(x) \overline g(x) dx\right).
\]
In this paper, $K$ is an integer with $K\geq 2$. For brevity, $\sum_k$ denotes $\sum_{k=1}^K$. 
For $k=1,\ldots, K$, $\ek$ denotes the unit vector of $\RR^2$ corresponding to the complex number $e^{i\frac {2\pi(k-1)}{K}}$. We define the constant $\kappa=\kappa(K)$ by
\be\label{d:ka}
\kappa = \left|1 - e^{i \frac{2\pi}{K}}\right| = (2-2\cos(2\pi/K))^{1/2} >0.
\ee
Recall that we denote by $Q(x):=Q(|x|)$ the unique radial positive ground state of~\eqref{nls}:

\be\label{e:q}
Q''+\frac{Q'}{r}- Q+Q^3 = 0,\quad Q'(0)=0,\quad \lim_{r\to +\infty}Q(r)=0.
\ee
It is well-known and easily checked by ODE arguments that for some constant $c_Q>0$,
\be\label{as:q}
\hbox{for all $r>1$},\quad 
\left| Q(r)-c_Q r^{-\frac 12} e^{-r}\right| +
\left| Q'(r)+c_Q r^{-\frac 12} e^{-r}\right| \lesssim r^{-\frac 32}e^{-r}.
\ee
We set
\be\label{d:Iq}
I_Q = \int   Q^3(x) e^{x_1} dx, \quad x=(x_1,x_2).
\ee
We denote by $\mathcal Y$ the set of smooth functions $f$ such that
\be\label{d:Y}
\hbox{for all $p\in \NN$, there exists $q\in \NN$, s.t. for all $x\in \RR^2$}\quad |f^{(p)}(x)|\lesssim |x|^q e^{-|x|}.
\ee
Let  $\Lambda$ be the generator of $L^2$-scaling in two dimensions:
\[
\Lambda f=f+x\cdot\nabla f  .
\]
The linearization of~\eqref{nls} around  $Q$ involves the following Schr\"odinger operators:
\[
L_+:=-\Delta+1-3Q^2,\qquad
L_-:=-\Delta+1-Q^2.
\]
Denote by $\rho\in \mathcal Y$   the unique radial solution $H^1$ to 
\be
\label{eqrho}
L_+\rho =\frac{|x|^2}4Q
\ee which satisfies on $\RR^2$
\be\label{d:rh}
|\rho(x)|+|\nabla\rho(x)|\lesssim (1+|x|^3) Q(x).
\ee
We recall the generalized null space relations  (see \cite{We85})
\begin{equation}\label{e:s}
\begin{aligned}
&L_-Q=0,\quad L_+(\Lambda Q)=-2Q,\quad L_-(|x|^2Q)=-4\Lambda Q,\quad L_+\rho =\frac{|x|^2}4Q,\\
&L_+(\nabla Q)=0,\quad L_-(xQ)=- 2 \nabla Q,
\end{aligned}
\end{equation}
and the classical (see e.g. \cite{MeRa04,MeRa05,RaSz11,We85,ChGuNaTs07}) coercivity property: there exists $\mu>0$ such that  for all $\eta\in H^1$, 
\begin{multline}\label{e:co} 
\la L_+ \re \eta,\re \eta\ra + \la L_-\im\eta,\im\eta\ra \geq \mu \|\eta\|_{H^1}^2\\
-\frac 1\mu\left( \la \eta,Q \ra^2+\la \eta,|x|^2Q \ra^2+|\la \eta,xQ \ra|^2
+\la \eta,i \rho\ra^2+|\la\eta,i\nabla Q \ra|^2 \right).
\end{multline}


\subsection{Outline of the paper}

The main goal of Sect.~\ref{s:2} is to
 construct a symmetric  $K$--bubble approximate solution to (NLS) and to extract the formal evolution system
of the geometrical parameters of the bubbles.
The key observation is that this system contains forcing terms due to the nonlinear interactions of the waves, and has a special solution corresponding at the main order to the regime of Theorem~\ref{thmmain}
(see Sect.~\ref{s:2.2}).
In Sect.~\ref{s:3}, we prove  uniform estimates 
on particular \emph{backwards solutions} of (NLS) related to the special   regime of Theorem~\ref{thmmain}.
We proceed in two main steps. First, we control the residue term by  energy arguments in the context of multi--bubbles. Second,   a careful adjustment of the \emph{final data} yields a  uniform control of the geometrical parameters. 
In Sect.~\ref{s:4}, we finish the proof of Theorem~\ref{thmmain} by   compactness  arguments on a suitable sequence of {backwards solutions} of (NLS) satisfying the uniform estimates of Sect.~\ref{s:3}.


\subsection*{Acknowledgements}  Y.M. was partly supported by ERC 291214 BLOWDISOL.
P.R. was supported by the ERC-2014-CoG 646650 SingWave and the Institut Universitaire de France. 
This work was done while the authors were members of the MSRI (Fall 2015).
They warmly thank MSRI for its hospitality.
The authors thank Thierry Cazenave for enlightening discussions.

\section{Approximate solution}\label{s:2}

In this section, we first
 construct a symmetric  $K$--bubble approximate solution to (NLS) and extract the evolution system
of the geometrical parameters of the bubbles. This system contains forcing terms due to the nonlinear interactions of the waves.
Second, we  write explicitly a special formal solution of this system that will serve as a guideline for the construction of the special solution $u(t)$ of Theorem~\ref{thmmain}.
Third, we state a standard modulation lemma around the approximate solution.
Recall that the integer $K\geq 2$ is fixed.

\subsection{Approximate solution and nonlinear forcing}
Consider a time dependent $C^1$ function $\vec p$ of the form
\[
\vec p = (\lambda,z,\gamma,\beta,b)\in (0,+\infty)^2\times \RR^3,
\]
with $|b|+|\beta|\ll 1$ and $z\gg 1$. We renormalize the flow by considering
\be\label{d:v}
u(t,x)=\frac {e^{i\gamma(s)}}{\lambda(s)} v(s,y),\quad   {dt} =  {\lambda^2(s)} {ds},
\quad y=\frac x{\lambda(s)},
\ee
so that
\be\label{e:v}
i \partial_t u + \Delta u + |u|^2 u=
\frac{e^{i\gamma}}{\lambda^3} \left[i\dot v + \Delta v - v + |v|^2 v - i\frac{\dot \lambda}{\lambda} \Lambda v + (1-\dot \gamma)v \right]
\ee
($\dot v$ denotes derivation with respect to $s$).
We  introduce the following $\vec p$-modulated ground state solitary waves, for any $k\in \{1,\ldots,K\}$, 
\begin{equation}\label{d:Pk}
 P_k(s,y) =  e^{i\Gamma_k(s,y-z_k(s))} Q_{a(z(s))}(y-z_k(s)), 
\end{equation}
for
\begin{equation}\label{d:bzk}
\beta_k =\beta \ek, \quad z_k = z \ek,\quad  \Gamma_k(s,y) =  \beta_k \cdot y - \frac b4 |y|^2,
\end{equation}
and where we have fixed
\be\label{d:a} 
Q_a = Q + a \rho,\quad 
a(z) = -c_a z^{\frac 12} e^{-\kappa z}, \quad
c_a =\left\{ 
\begin{aligned} 
&\frac{\kappa^{\frac 12}  c_Q  I_Q }{4 \la \rho,Q\ra} >0 \quad  \hbox{for $K=2$}\\
&\frac{\kappa^{\frac 12}  c_Q  I_Q }{2 \la \rho,Q\ra} >0 \quad  \hbox{for $K\geq 3$}
\end{aligned}\right.\ee
Note that the introduction of such modulated $Q_a$ corresponds to the intrinsic instability of the pseudo--conformal blow up regime ($a=0$ leads to $b(s)=s^{-1}$). Similar exact $Q_a$ (at any order of $a$) were introduced in \cite{MeRaSz13}.
The explicit above choice of $a(z)$ corresponds to direct integration of the nonlinear interactions at the main order, as explained in  Sect.~\ref{s:2.2}. We also refer to~\eqref{c:a} in the proof of Lemma~\ref{l:m} where this choice of $a(z)$ leads to an almost conservation of the mass for the approximate solution $\PP$ defined below. Note   that the different formula for $c_a$ depending on the value of $K$ corresponds to the fact that for $K\geq 3$, each given soliton has exactly two closest neighbor solitons.

\noindent Let
\be\label{d:PP}
\PP(s,y) =  \PP(y;(z(s),b(s),\beta(s)))= \sum_k P_k(s,y).
\ee
Then,  $\PP$ is an approximate solution of the rescaled equation  in the following sense.
\begin{lemma}[Leading order approximate flow]
\label{pr:P}
Let the vectors of modulation equations be 
\be\label{d:mkM}
\vec m_k^a=\left(
\begin{array}{c}
b+\lsl \\[0.1cm]
\dot z_k - 2\beta_k +\lsl z_k\\[0.1cm]
\dot \gamma - 1+ |\beta_k|^2 - \lsl (\beta_k \cdot z_k) - (\beta_k \cdot\dot z_k)  \\[0.1cm]
\dot \beta_k - \lsl \beta_k +  \frac b2 (\dot z_k - 2 \beta_k + \lsl z_k) \\[0.1cm]
\dot b + b^2 - 2 b (b+\lsl) -a 
\end{array}\right) ,
\qquad
\vec\md V =\left(
\begin{array}{c}
-i\Lambda V \\[0.15cm]
-i \nabla V\\[0.15cm]
-V\\[0.15cm]
-y V \\[0.15cm]
\frac {|y|^2}4 V
\end{array}
\right).
\ee
Then the error to the renormalized flow~\eqref{e:v} at $\PP$,
\be\label{d:EE}
 \EE_{\PP} = i\dot \PP + \Delta \PP - \PP + |\PP|^2 \PP - i \lsl \Lambda \PP + (1-\dot \gamma) \PP 
\ee
decomposes as 
\begin{equation}\label{d:psik}
\EE_{\PP} = \sum_k \big[e^{i\Gamma_k}\Psi_k\big] (y-z_k),
\quad
\Psi_k = \vec m_k^a \cdot \vec \md Q_a +i \dot  z a'(z) \rho + G_k +\Psi_{Q_a},
\end{equation}
where
\begin{equation}\label{b:Gk}
\|G_{k} \|_{L^\infty} \lesssim z^{-\frac 12} e^{-\kappa z},\quad 
\|\Psi_{Q_a}\|_{L^\infty}\lesssim |a|^2,
\end{equation}
and
\begin{equation}\label{b:iGk}
\left|\la G_k,iQ_a\ra + \kappa c_a \la \rho,Q\ra b z^{\frac 32} e^{-\kappa z}\right| \lesssim
(|\beta|^2 z^2+|b|^2z^4 + |\beta| z+|b|z) z^{-\frac 12} e^{-\kappa z} 
+ z^3 e^{-2 \kappa z}.
\end{equation}
\end{lemma}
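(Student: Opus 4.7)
The plan is to expand $\EE_\PP$ bubble by bubble. Writing $\PP = \sum_k P_k$ with $P_k = e^{i\Gamma_k(\cdot-z_k)} Q_a(\cdot-z_k)$, I split the renormalized error into a single-bubble contribution (what one would obtain by replacing $\PP$ by $P_k$ in~\eqref{e:v}) plus the nonlinear cross-terms coming from $|\PP|^2\PP - \sum_j|P_j|^2 P_j$. For the single-bubble part, the time derivatives of $\beta_k$, $b$, $z_k$, $\lambda$, $\gamma$, combined with the spatial actions of $\Delta$ and $\Lambda$ on $e^{i\Gamma_k}Q_a$, produce the five components of $\vec m_k^a\cdot\vec\md Q_a$ by direct algebra. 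The only non-routine entry is the $-a$ in the fifth component, which arises from $\Delta Q_a - Q_a + Q_a^3 = -a(|y|^2/4)Q + O(|a|^2)$, a consequence of $L_+\rho=(|y|^2/4)Q$ from~\eqref{eqrho}; the $O(|a|^2)$ remainder collects into $\Psi_{Q_a}$, which is bounded by $|a|^2$ in $L^\infty$ since $\rho,Q\in\mathcal Y$. Differentiating $a = a(z(s))$ in $s$ generates the additional term $i\dot z\,a'(z)\rho$ in $\Psi_k$.

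The cross term $G_k$ then collects the residue of $|\PP|^2\PP - \sum_j|P_j|^2 P_j$ localized and dephased at the $k$-th bubble. Its leading pointwise size is governed by the three-wave products $2|P_k|^2 P_j + P_k^2\overline{P_j}$ for $j\ne k$. Since $\min_{j\ne k}|z_k - z_j| = \kappa z$ and~\eqref{as:q} gives $|Q(z_k - z_j + \tilde y)| \lesssim (\kappa z)^{-1/2}e^{-\kappa z}e^{C|\tilde y|}$ on any $O(1)$-window in $\tilde y$, the bound~\eqref{b:Gk} follows. Triple-bubble contributions involving three distinct centers are $O(e^{-2\kappa z})$ and negligible at this level.

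The heart of the proof is the projection bound~\eqref{b:iGk}. Writing $\la G_k, iQ_a\ra = \im\int G_k(\tilde y)\,Q_a(\tilde y)\,d\tilde y$ and discarding non-nearest-neighbor contributions (bounded by $z^3 e^{-2\kappa z}$, the last error term in~\eqref{b:iGk}), one obtains
\[
\la G_k, iQ_a\ra \;\approx\; \sum_{j:\,|{\rm e}_k - {\rm e}_j| = \kappa} \im\!\int Q_a^3(\tilde y)\,Q_a(z_k - z_j + \tilde y)\,\bigl[2e^{i\phi_{kj}(\tilde y)} + e^{-i\phi_{kj}(\tilde y)}\bigr]\,d\tilde y,
\]
with phase difference, using $\beta_k = \beta\,{\rm e}_k$, $z_k = z\,{\rm e}_k$,
\[
\phi_{kj}(\tilde y) = \Gamma_j(z_k - z_j + \tilde y) - \Gamma_k(\tilde y) = -\tfrac{\kappa^2}{4}(2\beta z + b z^2) - \bigl(\beta + \tfrac{bz}{2}\bigr)({\rm e}_k - {\rm e}_j)\cdot\tilde y.
\]
Since $\im[2e^{i\phi}+e^{-i\phi}] = \sin\phi$, I would Taylor-expand $\sin\phi_{kj}$, use the far-field asymptotic $Q(z_k - z_j + \tilde y) = c_Q(\kappa z)^{-1/2}e^{-\kappa z - \hat\nu\cdot\tilde y}(1+o(1))$ with $\hat\nu = ({\rm e}_k - {\rm e}_j)/\kappa$, and the rotational identity $\int Q^3(\tilde y)\,e^{-\hat\nu\cdot\tilde y}\,d\tilde y = I_Q$ for any unit vector (by radiality of $Q$, after the change of variable $\tilde y\to-\tilde y$). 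The constant piece $-\kappa^2 bz^2/4$ of $\sin\phi_{kj}$ then yields $-\tfrac{c_Q I_Q \kappa^{3/2}}{4}\,bz^{3/2}e^{-\kappa z}$ per nearest neighbor. Multiplying by the number of nearest neighbors ($1$ for $K=2$, $2$ for $K\ge 3$) and invoking~\eqref{d:a} reproduces exactly the target $-\kappa c_a\la\rho,Q\ra\,bz^{3/2}e^{-\kappa z}$. The linear-in-$\tilde y$ piece of $\sin\phi_{kj}$ and the higher-order Taylor corrections, together with the $O(1/z)$ correction to the $Q$-asymptotic, supply the remainder terms $|\beta|z$, $|b|z$, $|\beta|^2 z^2$, $|b|^2 z^4$ in~\eqref{b:iGk}. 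The main obstacle is precisely the bookkeeping needed to verify that the definition~\eqref{d:a} of $c_a$ — with its $K$-dependent factor reflecting the number of nearest-neighbor bubbles — is exactly what makes the leading $bz^{3/2}e^{-\kappa z}$ term match, and that the $\rho$-part of $Q_a = Q + a\rho$ contributes only to higher-order remainders.
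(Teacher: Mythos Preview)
Your proposal is correct and follows essentially the same route as the paper: split $\EE_\PP$ into single-bubble errors (step~1 in the paper), collect the cubic cross-terms into $G_k$ (step~2), and then estimate $G_k$ and its projection onto $iQ_a$ via the far-field asymptotics of $Q$ (steps~3--4). Your explicit computation of the phase difference $\phi_{kj}$ and the use of $\im[2e^{i\phi}+e^{-i\phi}]=\sin\phi$ is exactly the mechanism behind the paper's identity $\la 2G_k^{\rm(I)}+\overline{G_k^{\rm(I)}},iQ_a\ra=\la G_k^{\rm(I)},iQ_a\ra$, and your matching of the constant $c_a$ via the nearest-neighbor count reproduces the paper's case distinction $K=2$ versus $K\ge 3$.

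Two minor points of care. First, your justification of the $L^\infty$ bound on $G_k$ (``on any $O(1)$-window in~$\tilde y$'') is not quite sufficient as phrased: you need the product bound $Q(\tilde y)Q(\tilde y-\omega)\lesssim|\omega|^{-1/2}e^{-|\omega|}$ \emph{uniformly} in $\tilde y$, which follows from $(1+|\tilde y|)^{-1/2}(1+|\tilde y-\omega|)^{-1/2}\lesssim|\omega|^{-1/2}$ together with $e^{-|\tilde y|}e^{-|\tilde y-\omega|}\le e^{-|\omega|}$; this is the paper's~\eqref{prod}. Second, for $K\ge 4$ the non-nearest-neighbor two-wave terms are not $O(z^3e^{-2\kappa z})$ but rather $O(|b|z^{3/2}e^{-\kappa' z})$ with $\kappa'>\kappa$; these are nonetheless absorbed by the right-hand side of~\eqref{b:iGk} since $ze^{-(\kappa'-\kappa)z}\lesssim 1$. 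Neither point affects the validity of your argument.
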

\begin{proof}[Proof of Lemma~\ref{pr:P}]
\noindent \textbf{step 1.} Equation for $P_k$.
Let
\[
\mathcal E_{P_k} = i\dot P_k + \Delta P_k - P_k + |P_k|^2 P_k - i \lsl \Lambda P_k + (1-\dot \gamma) P_k.
\]
Let $\yzk=y-z_k$.
By direct computations
\begin{multline*}
i \dot P_k  = \Big[e^{i\Gamma_k} \Big( i\dot z a'(z) \rho - (\dot \beta_k\cdot \yzk) Q_a +(\dot z_k\cdot \beta_k) Q_a
\\ +\frac {\dot b} 4 |\yzk|^2 Q_a - \frac b 2 (\dot z_k \cdot \yzk) Q_a - i (\dot z_k \cdot \nabla Q_a)\Big)\Big] (\yzk),
\end{multline*}
\begin{multline*}
\Delta P_k  = \Big[e^{i\Gamma_k} \Big( \Delta Q_a - |\beta_k|^2 Q_a - \frac{b^2}4 |\yzk|^2 Q_a - i  b Q_a\\
+b (\beta_k\cdot \yzk) Q_a + 2 i (\beta_k\cdot \nabla Q_a) - i b (\yzk\cdot \nabla Q_a)\Big)\Big] (\yzk),
\end{multline*}
\begin{multline*}
\Lambda P_k  = \Big[e^{i\Gamma_k} \Big(  \Lambda Q_a + i(\beta_k\cdot \yzk) Q_a -i \frac b2 |\yzk|^2 Q_a
+(\yzk\cdot \nabla Q_a) \\+ i (z_{k}\cdot \beta_k) Q_a - i \frac b2 (z_k\cdot \yzk) Q_a + (z_k\cdot \nabla Q_a) \Big)\Big] (\yzk).
\end{multline*}
Thus,
\begin{align*}
& \mathcal E_{P_k}   = 
\Big[e^{i\Gamma_k} \Big( -i(b+\lsl) \Lambda Q_a - i (\dot z_k - 2 \beta_k+\lsl z_k) \cdot \nabla Q_a
\\& 
-(\dot \gamma - 1 +|\beta_k|^2 - \lsl (\beta_k\cdot z_k) - (\beta_k\cdot \dot z_k)) Q_a
 - (\dot \beta_k - \lsl \beta_k + \frac b2 (\dot z_k - 2 \beta_k + \lsl z_k)) \cdot \yzk Q_a\\
& +\frac 14 (\dot b + b^2 - 2 b (b+\lsl)) |\yzk|^2 Q_a
+i \dot  z a'(z) \rho 
+ \Delta Q_a - Q_a + |Q_a|^2 Q_a\Big)\Big]  (\yzk) 
\end{align*}
By $\Delta Q - Q + Q^3=0$ and  the definition  of $\rho$, $L_+ \rho = -\Delta \rho + \rho - 3 Q^2 \rho = \frac {1}{4} |y|^2 Q$ (see~\eqref{eqrho}), we have
\[
\Delta Q_a - Q_a + |Q_a|^2 Q_a
= - \frac a4 |y|^2 Q_a + \Psi_{Q_a},
\]
where
\begin{equation}\label{d:PQa}
\Psi_{Q_a} = |Q_a|^2 Q_a -Q^3 - 3 a Q^2 \rho + \frac {a^2}4 |y|^2 \rho.
\end{equation}
We have thus obtained the $P_k$ equation
\begin{equation}\label{e:Pk}
\mathcal E_{P_k}   = 
\Big[ e^{i\Gamma_k} \Big(\vec m_k^a \cdot \vec \md Q_a +i \dot  z a'(z) \rho  +\Psi_{Q_a} 
\Big)\Big]  (y-z_k),
\end{equation}
where $\vec m_k^a$ and $\vec \md$ are defined in~\eqref{d:mkM}.

\smallskip

\noindent\textbf{step 2.} Equation for $\PP$.  
From step 1 and the definition of $\EE_{\PP}$ in~\eqref{d:EE}, it follows that
\[
\EE_{\PP}  = \sum_k \EE_{P_k} + |\PP|^2 \PP - \sum_k |P_k|^2 P_k.
\]
Observe that
\[
|\PP|^2 \PP - \sum_k |P_k|^2 P_k 
= \sum_{j,\, k,\,  l} P_k P_j \overline P_l - \sum_k |P_k|^2 P_k  = \sum_k F_k,
\]
with
\[
F_k
= 2|P_k|^2  \sum_{j\neq k}  P_j + P_k^2 \sum_{j\neq k} \overline P_j  
+  \overline P_k \sum_{j\neq k,\, l\neq k,\, j\neq l} P_j P_l = \big[e^{i\Gamma_k} G_k\big](y-z_k),
\]
where we have set
\be\label{d:Gk}
G_k   = 2 G_{k}^{\rm (I)} + \overline G_{k}^{\rm (I)} + G_k^{\rm (II)},
\ee
and
\[\begin{split}
& G_{k}^{\rm (I)}(y)= \big[e^{-i \Gamma_k}Q_a^2\big](y) \sum_{j\neq k}  \big[ e^{i\Gamma_j} Q_a\big] (y-(z_j-z_k))	,\\
& G_k^{\rm (II)}(y) = \big[e^{-2i \Gamma_k} Q_a\big](y)     \sum_{j\neq k,\,  l\neq k,\,  j\neq l} 
\left( \big[ e^{i\Gamma_j}Q_a\big](y-(z_j-z_k))\cdot \big[ e^{i\Gamma_l}  Q_a\big](y-(z_l-z_k))\right).
\end{split}\]
Therefore,
\begin{equation}\label{d:psikb}
\EE_{\PP} = \sum_k \big[e^{i\Gamma_k}\Psi_k \big](y-z_k)
\quad \hbox{where}\quad
\Psi_k = \vec m_k^a \cdot \vec \md Q_a +i \dot  z a'(z) \rho + G_k +\Psi_{Q_a}.
\end{equation}

\noindent{\bf step 3} Nonlinear interaction estimates. In order to estimate the various  terms in~\eqref{d:psikb}, we will use the following interaction estimates: let $\omega,\wt\omega\in \RR^2$, $|\omega|\gg 1$, $|\wt\omega|\gg 1$, let $q\geq 0$, then:
\be\label{e:es}
\int (1+|y|^q) Q^3(y)Q(y-\omega)dy \lesssim |\omega|^{-\frac 12} e^{-|\omega|}.
\ee
\be\label{e:esb}
\int (1+|y|^q) Q^2(y)Q(y-\omega)Q(y-\wt\omega)dy \lesssim  e^{-\frac 32|\omega|}+e^{-\frac 32|\wt\omega|}.
\ee
\be\label{e:eq}
\left| \int Q^3(y)Q(y-\omega) dy - c_Q  I_Q  |\omega|^{-\frac 12}e^{-|\omega|}\right|
\lesssim |\omega|^{-\frac 32}e^{-|\omega|},
\ee
with $c_Q$ and $I_Q$ are given by~\eqref{as:q}--\eqref{d:Iq}.

\smallskip

\noindent{\it Proof of~\eqref{e:es}.} From~\eqref{as:q},
observe that
\begin{equation}\label{prod}
Q(y)Q(y-\omega)\lesssim 
(1+|y|)^{-\frac 12}(1+|y-\omega|)^{-\frac 12} e^{-|y|}e^{-|\omega|+|y|}
\lesssim |\omega|^{-\frac 12}  e^{-|\omega|}.\end{equation}
Thus,
\[
\int (1+|y|^q) Q^3(y)Q(y-\omega)dy\lesssim
 |\omega|^{-\frac 12}  e^{-|\omega|} \int  (1+|y|^q) Q^2(y) dy \lesssim  |\omega|^{-\frac 12}  e^{-|\omega|} .
\]

\noindent{\it Proof of~\eqref{e:esb}.} 
From  ~\eqref{prod},
\begin{align*}
\int  (1+|y|^q) Q^2(y)Q(y-\omega)Q(y-\wt\omega)dy  
& \lesssim \int  (1+|y|^q) Q^2(y)Q^{\frac 34}(y-\omega)Q^{\frac 34}(y-\wt\omega)dy\\
& \lesssim 
e^{-\frac 34 |\omega|}e^{-\frac 34 |\wt \omega|}
\int (1+|y|^q) Q^{\frac 12}(y) dy \lesssim
e^{-\frac 34 |\omega|}e^{-\frac 34 |\wt \omega|}.
\end{align*}
\noindent{\it Proof of~\eqref{e:eq}.} First,  using~\eqref{as:q},
\[
\int_{|y|>\frac 34|\omega|} Q^3(y) Q(y-\omega) dy \lesssim   e^{-\frac 94 |\omega|} \int Q(y-\omega) dy \lesssim  e^{-\frac 94 |\omega|}  .
\]
Second, for $|y|<\frac 34|\omega|$, we use~\eqref{as:q} to write
\[
\left|Q(y-\omega)-c_Q |y-\omega|^{-\frac 12} e^{-|y-\omega|}\right|
\lesssim |y-\omega|^{-\frac 32}e^{-|y-\omega|}
\lesssim |\omega|^{-\frac 32}e^{-|\omega|+|y|}.
\]
In particular,
\begin{align*}
&\left|\int_{|y|<\frac 34|\omega|} Q^3(y)Q(y-\omega) dy 
- c_Q \int_{|y|<\frac 34|\omega|} Q^3(y)|y-\omega|^{-\frac 12} e^{-|y-\omega|} dy\right|
 \lesssim |\omega|^{-\frac 32}e^{-|\omega|}.
\end{align*}
Still for $|y|<\frac 34|\omega|$, the expansion
\[
|y-\omega|^2 =  |\omega|^2 - 2 {y\cdot \omega} + {|y|^2}
\]
implies
\[
\left| |y-\omega|^{-\frac 12} - |\omega|^{-\frac 12}\right|
\lesssim |\omega|^{-\frac 32} |y|
\]
and
\[
\left| |y-\omega| - |\omega| + y\cdot \frac \omega{|\omega|} \right|\lesssim |\omega|^{-1} |y|^2 .
\]
Thus,
\[
\left| e^{-|y-\omega|} - e^{-|\omega|+ y\cdot\frac \omega{|\omega|}}\right|
\lesssim |\omega|^{-1} |y|^2 \left(e^{-|y-\omega|} +e^{-|\omega|+y\cdot\frac \omega{|\omega|}}\right)
\lesssim |\omega|^{-1} |y|^2 e^{-|\omega|}e^{|y|}.
\]
Therefore,
\[
\left| |y-\omega|^{-\frac 12} e^{-|y-\omega|} - |\omega|^{-\frac 12} e^{-|\omega|+ y\cdot\frac \omega{|\omega|}}\right|
\lesssim |\omega|^{-\frac 32} (1+|y|^2) e^{-|\omega|}e^{|y|},
\]
and so
\begin{align*}
&\left| \int_{|y|<\frac 34|\omega|} Q^3(y)|y-\omega|^{-\frac 12} e^{-|y-\omega|} dy
- |\omega|^{-\frac 12} e^{-|\omega|} \int_{|y|<\frac 34|\omega|} Q^3(y)  e^{y\cdot\frac \omega{|\omega|}} dy\right|
\\&\lesssim |\omega|^{-\frac 32} e^{-|\omega|} \int (1+|y|^2) e^{-2|y|}dy\lesssim |\omega|^{-\frac 32} e^{-|\omega|}.
\end{align*}
Also, we see that
\[
|\omega|^{-\frac 12} e^{-|\omega|}  \int_{|y|>\frac 34|\omega|} Q^3(y) e^{y\cdot\frac \omega{|\omega|}} dy
\lesssim |\omega|^{-\frac 12} e^{-|\omega|}  \int_{|y|>\frac 34|\omega|} e^{-2|y|} dy
\lesssim e^{-2|\omega|}.
\]
Since for all $\omega\neq 0$ (see~\eqref{d:Iq}),
\[
I_Q = \int Q^3(y) e^{y\cdot\frac \omega{|\omega|}} dy,
\]
we have proved~\eqref{e:eq}.

\smallskip

\noindent\textbf{step 4.} Estimates of $G_{k}$ and $\Psi_{Q_a}$. We are now in position to prove~\eqref{b:Gk} and~\eqref{b:iGk}. The estimate on $\Psi_{Q_a}$ in~\eqref{b:Gk}  follows directly from its definition~\eqref{d:PQa}. To estimate $G_k$ as in~\eqref{b:Gk}, we first note that from~\eqref{as:q},~\eqref{d:rh} and the definition of $a(z)$ in~\eqref{d:a}, we have, for some $q>0$,
\[
|Q_a|\lesssim |y|^{-\frac 12} e^{-|y|} + |a(z)| |y|^{q}  e^{-|y|}
\lesssim (1+|y|)^{-\frac 12} e^{-|y|} +  z^{\frac 12} e^{-\kappa z}(1+|y|)^{q}e^{-|y|}.
\]
Moreover, for $j\neq k$, from the definition of $\kappa$ in~\eqref{d:ka},
\[
|z_j-z_k| = z |{\rm e}_k-{\rm e}_j|  \geq   \kappa z.
\]
From this, it follows easily that for $j\neq k$,
\[
|Q_a(y)||Q_a(y-(z_k-z_j))|\lesssim z^{-\frac 12} e^{-\kappa z},
\]
which in light of the explicit formula~\eqref{d:Gk} yields the control of $G_k$ in~\eqref{b:Gk}.\\
We now turn to the proof of ~\eqref{b:iGk} which requires a   more careful analysis of the interaction terms. We first compute the main order of the contribution of $G_1^{\rm (I)}$ to $\la G_1,iQ\ra$.
For $j=2,\ldots,K$,
\begin{align*}
&\la \big[e^{-i\Gamma_1}Q_a^2\big] (y) \big[e^{i\Gamma_j}Q_a\big](y-(z_j-z_1)),iQ_a\rangle\\
& = \int Q_a^3(y)  Q_a(y-z(\mathrm{e}_j-\mathrm{e}_1)) \sin(\Gamma_j(y-z(\mathrm{e}_j-\mathrm{e}_1))-\Gamma_1(y)) dy.
\end{align*}
First,  by the decay of $\rho$ (see~\eqref{d:rh}),  ~\eqref{e:es} and the definition of $a(z)$ in~\eqref{d:a}, we have
\begin{align*}
& \int \left| Q_a^3(y)  Q_a(y-z(\mathrm{e}_j-\mathrm{e}_1)) -  Q^3(y) Q(y-z(\mathrm{e}_j-\mathrm{e}_1))\right| dy
\lesssim |a| z^{\frac 52} e^{-\kappa z}
\lesssim z^3  e^{-2 \kappa z}.
\end{align*}
Next, note that, since $\Gamma_j= \beta_j \cdot y - \frac b4|y|^2$,
\begin{align*}
&\left|\sin(\Gamma_j(y-z(\mathrm{e}_j-\mathrm{e}_1))-\Gamma_1(y))-(\Gamma_j(y-z(\mathrm{e}_j-\mathrm{e}_1))-\Gamma_1(y))\right| 
\\ &\lesssim |\Gamma_j(y-z(\mathrm{e}_j-\mathrm{e}_1))|^2+|\Gamma_1(y)|^2 \lesssim |\beta|^2 (|y|^2+z^2)+|b|^2(|y|^4+z^4),
\end{align*}
and 
\[\left| (\Gamma_j(y-z(\mathrm{e}_j-\mathrm{e}_1))-\Gamma_1(y)) + \frac{b}4 |z(\mathrm{e}_j-\mathrm{e}_1)|^2 \right|
\lesssim |\beta| (|y|+z) + |b| (|y|^2 + |y|z).\]
Thus, using~\eqref{e:es},
\begin{align*}
& \int Q^3(y) Q(y-z(\mathrm{e}_j-\mathrm{e}_1)) \Big|\sin(\Gamma_j(y-z(\mathrm{e}_j-\mathrm{e}_1))-\Gamma_1(y)) + \frac{b}4 |z(\mathrm{e}_j-\mathrm{e}_1)|^2 \Big| dy\\
& \lesssim (|\beta|^2 z^2+|b|^2z^4 + |\beta| z+|b|z) z^{-\frac 12} e^{-\kappa z}.
\end{align*}
Therefore, we have proved
\begin{align*}
&\left|\la \big[e^{i\Gamma_1} Q_a^2\big] (y) \big[e^{i\Gamma_j} Q_a\big](y-z(\mathrm{e}_j-\mathrm{e}_1)),iQ_a\rangle 
+  \frac{b}4 |z(\mathrm{e}_j-\mathrm{e}_1)|^2  \int Q^3(y) Q(y-z(\mathrm{e}_j-\mathrm{e}_1))\right|\\
& \lesssim (|\beta|^2 z^2+|b|^2z^4 + |\beta| z+|b|z) z^{-\frac 12} e^{-\kappa z}+z^3 e^{-2 \kappa z}.
\end{align*}
For $j=2$ and $j=K$, we have $|z(\mathrm{e}_j-\mathrm{e}_1))|=\kappa z$, and so using~\eqref{e:eq},
\begin{align*}
&\left|\la  \big[e^{i\Gamma_1} Q_a^2\big] (y) \big[e^{i\Gamma_j} Q_a\big](y-z(\mathrm{e}_j-\mathrm{e}_1)),iQ_a\rangle
+ \frac{b}4c_Q I_Q \kappa^{\frac 32}  z^{\frac 32} e^{-\kappa z}  \right|\\
& \lesssim (|\beta|^2 z^2+|b|^2z^4 + |\beta| z+|b|z) z^{-\frac 12} e^{-\kappa z}
+z^3 e^{-2 \kappa z}.
\end{align*}
For $K\geq 4$ and $j=3,\ldots,K-1$, we have 
$|\mathrm{e}_j-\mathrm{e}_1|> \kappa'$, for some $\kappa'>\kappa$.
Thus the following bound follows from similar computations  
\begin{align*}
&\left|\la \big[e^{i\Gamma_1} Q_a^2\big] (y) \big[e^{i\Gamma_j} Q_a\big](y-z(\mathrm{e}_j-\mathrm{e}_1)),iQ_a\rangle\right| \\
&\lesssim 
(|\beta|^2 z^2+|b|^2z^4 + |\beta| z+|b|z^2) z^{-\frac 12} e^{-\kappa' z} +z^3 e^{-2 \kappa z}.
\end{align*}
Note that $\la 2 G_1^{\rm (I)}  + \overline G_1^{\rm (I)} , iQ_a \ra = \la G_1^{\rm (I)}, iQ_a\ra$. We finally bound the contribution of $G_1^{\rm (II)}$. For $j\neq 1,$ $l\neq 1$ and $l\neq j$,
\begin{align*}
& \la \left[ e^{-2i \Gamma_k} Q_a\right](y)  \left[e^{i \Gamma_j} Q_a\right](y-(z_j-z_k)) \left[e^{i\Gamma_l} Q_a\right](y-(z_l-z_k)),iQ_a\ra
\\&= \int Q_a^2(y)  Q_a(y-(z_j-z_k)) Q_a(y-(z_l-z_k))\\
& \qquad \times\sin(\Gamma_j(y-(z_j-z_k))+\Gamma_l(y-(z_l-z_k)) -2 \Gamma_k(y)) dy.
\end{align*}
By~\eqref{e:esb}, the bound on $|a|$ and
$|\Gamma_j|\lesssim |\beta| (|y|+z) + |b| (|y|^2 + z^2)$,
this term is bounded by $(|\beta| z+|b|z^2) e^{-\frac 32 \kappa z}$.\\
Gathering these estimates, using the definition of the constant $c_a$ in~\eqref{d:a} which takes into account the two different cases $K=2$ and $K\geq 2$ 
(for $K=2$, the soliton $P_1$ has nonlinear interaction with only one other soliton, while for $K\geq 3$, it has  exactly two closest neighboring solitons, $P_2$ and $P_K$),
 we obtain finally
\begin{align*}
|\la G_k,iQ_a\ra + \kappa c_a \la \rho,Q\ra b z^{\frac 32} e^{-\kappa z} | \lesssim
(|\beta|^2 z^2+|b|^2z^4 + |\beta| z+|b|z) z^{-\frac 12} e^{-\kappa z}
+z^3 e^{-2\kappa z},
\end{align*}
which completes the proof of~\eqref{b:iGk}.
\end{proof}


\subsection{Formal resolution of the modulation system with forcing}\label{s:2.2}


From Lemma~\ref{pr:P}, we derive a simplified modulation system with forcing term and we determine one of its   approximate solution that is relevant for the regime of Theorem~\ref{thmmain}.
Moreover, we justify the special choice of function $a(z)$ in~\eqref{d:a}.
Formally, i.e. assuming that $\PP$ is a solution of~\eqref{nls} up to error terms of lower order than the ones in~\eqref{d:psik} (making this rigorous will be the object of the bootstrap estimates in Sect.~\ref{s:4}), we have the following bounds  ($\vec m^a_k$ is defined in~\eqref{d:mkM})
\begin{equation}\label{formal1}
|\vec m_1^a|\lesssim z^{-\frac 12} e^{-\kappa z}.
\end{equation}
Indeed,~\eqref{formal1} is obtained from~\eqref{d:psik}--\eqref{b:Gk} by projecting $\EE_\PP$ onto   directions 
related to the generalized null space~\eqref{e:s} (see  Lemma~\ref{l:m} for rigorous computations).
To simplify the discussion, we drop the equation of $\gamma$, which is not coupled with any other equation and has no influence on the regime. Next, we see that using the first line of $\vec m^a_1$, i.e. $|b+\lsl|\lesssim z^{-\frac 12} e^{-\kappa z}$, we can replace $\lsl$ by $-b$ in all the other estimates.
Similarly, we insert the estimate on $\dot z$ from the second line into the estimate for $\dot \beta$.
We obtain the following simplified system
\begin{equation}\label{formal3}
|b+\lsl|+
|\dot z  - 2\beta  - b z| +  |\dot \beta + b \beta | + |\dot b + b^2  -a|\lesssim   z^{-\frac 12} e^{-\kappa z}.
\end{equation}
It is easy to check the following estimates 
\begin{lemma}\label{formal4}
Let $(z_{\rm app},\lambda_{\rm app}(s),\beta_{\rm app},b_{\rm app}(s))$ be such that
\begin{equation}\label{formal5}
\begin{aligned}
& \lambda_{\rm app}(s) = \log^{-1}(s),\quad
z_{\rm app}^{-\frac 32}(s) e^{\kappa z_{\rm app}(s)}
= \frac  {\kappa c_a}2 s^2,\\
& |\beta_{\rm app}(s) |\lesssim  s^{-1} \log^{-\frac 32}(s),\quad
b_{\rm app}(s) = s^{-1} \log^{-1}(s).
\end{aligned}
\end{equation}
Then,
\begin{equation}\label{formal6}
\begin{aligned}
& z_{\rm app}(s) \sim \frac 2\kappa \log(s),\quad
|b_{\rm app}+\frac{\dot \lambda_{\rm app}}{\lambda_{\rm app}}|=0,\quad
|\dot z_{\rm app}  - 2\beta_{\rm app}  - b_{\rm app} z_{\rm app}| \lesssim s^{-1} \log^{-\frac 12}(s),\\
& |a(z_{app}) + s^{-2} \log^{-1}(s) |\lesssim s^{-2}  \log^{-\frac 32}(s),
\quad  |\dot b_{\rm app} + b_{\rm app}^2  -a(z_{\rm app})|\lesssim s^{-2} \log^{-\frac 32}(s).
\end{aligned}\end{equation}
\end{lemma}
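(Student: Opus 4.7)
The plan is to verify the five estimates of~\eqref{formal6} by direct calculation. The only real work lies in extracting the asymptotics of $z_{\rm app}(s)$ from its implicit defining equation; everything else is elementary differentiation and bookkeeping of remainders, made possible by the specific prefactor $\kappa c_a/2$ in the definition of $z_{\rm app}$. I would begin with the cheap identity: since $\lambda_{\rm app}(s) = 1/\log s$, I compute $\dot\lambda_{\rm app}/\lambda_{\rm app} = -1/(s\log s) = -b_{\rm app}(s)$, so $b_{\rm app} + \dot\lambda_{\rm app}/\lambda_{\rm app} = 0$ is an equality, not an estimate.

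To extract the asymptotics of $z_{\rm app}$, I take the logarithm of the defining relation and obtain $\kappa z_{\rm app} - \frac{3}{2}\log z_{\rm app} = 2\log s + O(1)$. A single bootstrap round (substituting the rough estimate $z_{\rm app}\sim 2\log s/\kappa$ back into the logarithmic term) yields $z_{\rm app}(s) = \frac{2}{\kappa}\log s + \frac{3}{2\kappa}\log\log s + O(1)$, which in particular gives the first claim $z_{\rm app}\sim 2\log s/\kappa$. Differentiating the logarithmic form in $s$ gives $(\kappa - 3/(2z_{\rm app}))\dot z_{\rm app} = 2/s$, hence $\dot z_{\rm app}(s) = 2/(\kappa s) + O(1/(s\log s))$.

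For the $\dot z$ line, I compute $b_{\rm app}z_{\rm app} = z_{\rm app}/(s\log s) = 2/(\kappa s) + O(\log\log s/(s\log s))$ using the expansion of $z_{\rm app}$. Subtracting gives $|\dot z_{\rm app} - b_{\rm app}z_{\rm app}| = O(\log\log s/(s\log s))$, which is $o(s^{-1}\log^{-1/2}s)$; since $|\beta_{\rm app}|\lesssim s^{-1}\log^{-3/2}s$ is smaller still, the third line of~\eqref{formal6} follows. For $a(z_{\rm app})$, I factor out one power of $z_{\rm app}$ and use the defining relation to eliminate the exponential:
\[
a(z_{\rm app}) = -c_a z_{\rm app}^{1/2}e^{-\kappa z_{\rm app}} = -\frac{c_a}{z_{\rm app}}\bigl(z_{\rm app}^{3/2}e^{-\kappa z_{\rm app}}\bigr) = -\frac{2}{\kappa z_{\rm app}s^2}.
\]
Inserting the expansion of $z_{\rm app}$ yields $a(z_{\rm app}) + s^{-2}\log^{-1}s = s^{-2}\bigl(\log^{-1}s - 2/(\kappa z_{\rm app})\bigr) = O(\log\log s/(s^2\log^2 s))$, which is $o(s^{-2}\log^{-3/2}s)$.

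Finally, differentiating $b_{\rm app} = 1/(s\log s)$ gives $\dot b_{\rm app} = -s^{-2}\log^{-1}s - s^{-2}\log^{-2}s$, and $b_{\rm app}^2 = s^{-2}\log^{-2}s$, so the second-order terms cancel and $\dot b_{\rm app} + b_{\rm app}^2 = -s^{-2}\log^{-1}s$. Combining with the expansion of $a(z_{\rm app})$ from the previous paragraph, the $-s^{-2}\log^{-1}s$ terms cancel and the remainder is of order $\log\log s/(s^2\log^2 s)$, which is $o(s^{-2}\log^{-3/2}s)$. I do not anticipate any obstacle beyond correctly tracking these $\log\log s/\log s$ type remainders; the whole point of the constants fixed in~\eqref{d:a} and in the defining relation for $z_{\rm app}$ is precisely to engineer the leading-order cancellations in the third and fifth lines of~\eqref{formal6}.
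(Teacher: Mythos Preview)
Your proof is correct and is essentially the direct verification the paper has in mind; the paper itself gives no proof, stating only that ``it is easy to check the following estimates,'' and your computation fills in exactly those details.
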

The above estimates mean that~\eqref{formal5} is a reasonnable guess for the first order asymptotics as $s\to +\infty$ of some particular solutions of
\eqref{formal3} (we refer to Sect.~\ref{:s:4.4} for a rigorous integration of~\eqref{formal3}).
Note that we do not actually determine the main order of $\beta(s)$; to do this, more interaction computations would be necessary. However, since $|\dot \beta + b \beta |\lesssim z^{-\frac 12} e^{-\kappa z}$, formally, we obtain
$|\dot \beta|\lesssim s^{-2} \log^{-2}(s)$, which justifies a bootstrap on $\beta(s)$ of the form
$|\beta(s)|\ll s^{-1} \log^{-2}(s)$. 
Note also that there exist   solutions of~\eqref{formal5} with different asymptotics,
 corresponding to (NLS) solutions like $v(t)$ of Corollary~\ref{maincor}.

\smallskip

To complete this formal discussion,
we justify the choice of $a(z)$ in~\eqref{d:a} in the regime given by~\eqref{formal5}.
Indeed, projecting $\Psi_1$ onto the direction $iQ_a$, from~\eqref{b:iGk}, we obtain at the leading order
\begin{equation}\label{formal2}
|\dot z a'(z) - \kappa c_a b z^{\frac 32} e^{-\kappa z}|\lesssim
(|\beta|^2 z^2+|b|^2z^4 + |\beta| z+|b|z) z^{-\frac 12} e^{-\kappa z} + z^3 e^{-2\kappa z}. 
\end{equation}
In the regime suggested by~\eqref{formal5}, since $|\beta|\ll |b|z$, we have $bz\sim \dot z$ and thus,
simplifying $\dot z$, we obtain
\[
|  a'(z) - \kappa c_a   z^{\frac 12} e^{-\kappa z}| \lesssim z^{-\frac 12} e^{-\kappa z},
\]
which justifies the definition~\eqref{d:a} by integrating in $z$.


\subsection{Modulation of the approximate solution}

 
We state a standard modulation result around $\PP$. 
We  restrict ourselves to the case of solutions invariant by the  rotation preserving $\PP$.
Denote by $\tau_K$ the rotation of center $0$ and angle $\frac{2\pi}{K}$ in $\RR^2$. Since $Q$ and $\rho$ are radial, by definition of $P_k$ and $\beta_k$, $z_k$ in~\eqref{d:Pk} and~\eqref{d:bzk}, we have
for $k\in \{1,\ldots, K-1\}$,  $P_k( y) = P_{k+1}(\tau_K  y)$ and $P_K(y) = P_1(\tau_K y)$.
In particular, it follows that $\PP(\tau_K y)=\PP(y)$, i.e. $\PP$ is invariant by the rotation $\tau_K$.
Note also that equation~\eqref{nls} is invariant by rotation. In particular, if a solution of~\eqref{nls} is invariant by the rotation $\tau_K$ at some time, then it is invariant by rotation at any time. In this context, the  following modulation result relies on a standard argument based on the Implicit Function Theorem (see e.g. Lemma~2 in~\cite{MeRa05}) and we omit its proof.

\begin{lemma}[Modulation around $\PP$]
\label{le:mod}
Let $I$ be some time interval. Let $u \in C(I,H^1(\RR^2))$ be a solution of~\eqref{nls} 
  invariant by the rotation $\tau_K$ and
such that
\be\label{close}
\sup_{t\in I} 
\left\| e^{-i\tilde\gamma (t)}\tilde\lambda (t) u(t,\tilde\lambda (t) \,.\,)- \sum_k Q(\,.\,-{\rm e}_k \tilde z (t))\right\|_{H^1}<\delta
\ee
for some $\tilde\lambda(t)>0$, $\tilde\gamma(t)\in \RR^2$, $\tilde z(t)>\delta^{-1}$, where $\delta>0$ is small enough.
Then, there exist a $C^1$ function
\[
\vec p=(\lambda,z,\gamma,\beta,b):I\to (0,\infty)^2\times\RR^3,
\]
such that,  for $\PP(t,y)=  \PP(y;z(t),b(t),\beta(t))$ as defined in~\eqref{d:PP},
the solution $u(t)$ decomposes on $I$   as
\be\label{dec}
u(t,x) = \frac {e^{i\gamma(t)}} {\lambda(t)} (\PP+\e)(t,y),\quad y=\frac{x}{\lambda(t)},
\ee
where for all $t\in I$,
\be\label{small}
|b(t)|+|\beta(t)|+\|\e(t)\|_{H^1}+|z(t)|^{-1}\lesssim \delta,
\ee
and, setting $\e(t,y)  = \big[e^{ i\Gamma_1} \eta_1\big](t,y-z_1)$,
\be\label{ortho}
\la \eta_1(t),|y|^2Q \ra =|\la \eta_1(t),yQ \ra|=
\la \eta_1(t),i\rho\ra=|\la\eta_1(t),i\nabla Q\ra|+ \la \eta_1(t),i \Lambda Q\ra=0.
\ee
Moreover, $\e $ is also invariant by the rotation $\tau_K$.
\end{lemma}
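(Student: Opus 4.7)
The plan is to apply the Implicit Function Theorem to a functional whose vanishing encodes the orthogonality conditions \eqref{ortho}. Given $u$ satisfying \eqref{close}, I look for parameters $\vec p = (\lambda, z, \gamma, \beta, b)$ in a neighborhood of $\vec p_0 = (\tilde\lambda, \tilde z, \tilde\gamma, 0, 0)$ such that, after the renormalization \eqref{dec} and the localization $\e(y) = [e^{i\Gamma_1}\eta_1](y - z_1)$, the conditions \eqref{ortho} hold. Since both $u$ and $\PP$ (built from the symmetric configuration $z_k = z\,{\rm e}_k$, $\beta_k = \beta\,{\rm e}_k$) are invariant under the rotation $\tau_K$, the residual $\e$ inherits the same symmetry; the ensuing relation $\eta_k(y) = \eta_1(\tau_K^{1-k} y)$ is exactly what is needed to make the five \emph{independent} scalar orthogonality conditions implicit in \eqref{ortho} match the five scalar modulation parameters.

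Define
\[
\mathcal F(\vec p, u) = \Big(\la \eta_1, |y|^2 Q\ra,\ \la \eta_1, {\rm e}_1\cdot yQ\ra,\ \la \eta_1, i\rho\ra,\ \la \eta_1, i\,{\rm e}_1\cdot\nabla Q\ra,\ \la \eta_1, i\Lambda Q\ra\Big) \in \RR^5,
\]
where $\eta_1$ is determined from $u$ and $\vec p$ by the decomposition above. The next step is to compute the Jacobian $\partial_{\vec p}\mathcal F$ at the reference configuration $(\vec p_0, u_0)$ with $u_0 = \tilde\lambda^{-1} e^{i\tilde\gamma}\sum_k Q(\cdot/\tilde\lambda - {\rm e}_k \tilde z)$. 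Modulo interaction terms exponentially small in $\tilde z$, each partial derivative $\partial_{p_j}\eta_1\big|_{\vec p_0}$ reduces to one of the five linearized generators $-\Lambda Q$, $-\partial_{y_1}Q$, $-iQ$, $-iy_1 Q$, $\tfrac{i}{4}|y|^2 Q$ associated respectively with variations of $\lambda, z, \gamma, \beta, b$. The generalized null-space relations \eqref{e:s} then produce a triangular Jacobian with diagonal entries proportional to $\la \rho,\Lambda Q\ra$, $\|\partial_{y_1}Q\|_{L^2}^2$, $\|Q\|_{L^2}^2$, $\la y_1 Q,\partial_{y_1}Q\ra$, and $\la |y|^2 Q, Q\ra$, each of which is readily checked to be nonzero. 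Hence the Jacobian is invertible.

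Once invertibility is established, the standard quantitative IFT yields, for $\delta$ small enough, a unique $C^1$ map $u\mapsto \vec p(u)$ solving $\mathcal F(\vec p, u) = 0$ in a neighborhood of $\vec p_0$. Applying this pointwise for $t \in I$ delivers the $C^1$ function $\vec p(t)$ of the statement; the bound \eqref{small} follows from continuity of the construction, and the $\tau_K$-invariance of $\e$ follows from uniqueness in the IFT, since rotating the entire problem by $\tau_K$ produces an admissible decomposition with the same parameters. The main mild obstacle is carrying the Jacobian computation \emph{uniformly} as $u$ varies over the ball \eqref{close} and as the reference parameters $(\tilde\lambda, \tilde\gamma, \tilde z)$ vary; this is handled by the hypothesis $\tilde z > \delta^{-1}$ with $\delta$ sufficiently small, which renders the exponential interaction tails negligible against the nonzero diagonal of the leading-order Jacobian.
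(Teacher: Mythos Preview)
Your approach is exactly what the paper indicates: it omits the proof, stating only that it ``relies on a standard argument based on the Implicit Function Theorem'' and citing \cite{MeRa05}. Your sketch correctly sets up the functional $\mathcal F$, the reference point, and the mechanism by which the Jacobian is invertible (block-triangular with nonzero diagonal, up to exponentially small interaction terms controlled by $\tilde z>\delta^{-1}$).

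Two minor points are worth tightening. First, a few of the diagonal entries you list are not quite right: for example the $(\lambda,\,|y|^2Q)$ entry is $-\la\Lambda Q,|y|^2Q\ra=\int|y|^2Q^2$, and the $(\gamma,\,i\rho)$ entry is $-\la Q,\rho\ra$ rather than $\|Q\|_{L^2}^2$; this is harmless since each of the five correct entries is still nonzero. Second, the relation $\eta_k(y)=\eta_1(\tau_K^{1-k}y)$ that you derive links the different $\eta_k$ but does not by itself force the components of $\la\eta_1,yQ\ra$ and $\la\eta_1,i\nabla Q\ra$ orthogonal to ${\rm e}_1$ to vanish. This is cleanly resolved by observing that the ansatz $\PP$ (and hence the data actually used in Section~\ref{s:3}) is also invariant under the reflection $(y_1,y_2)\mapsto(y_1,-y_2)$, which the flow preserves; then $\eta_1$ is even in $y_2$ and the transverse components vanish by parity. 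Alternatively, one may simply impose only the five scalar conditions along ${\rm e}_1$, which is all that is used downstream in the proof of Lemma~\ref{l:m}.
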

Note that the choice of the special orthogonality conditions~\eqref{ortho} is related to the generalized null space of the linearized equation around $Q$,~\eqref{e:s} and to the coercivity property~\eqref{e:co}. See the proof of Lemma~\ref{l:m} for a technical justification of these choices (see also \cite{RaSz11}).

\section{Backwards uniform estimates}\label{s:3}
In this section, we  prove uniform estimates on particular backwards solutions.
The key point is to   carefully adjust their final data to obtain uniform estimates corresponding to the special  regime of Theorem~\ref{thmmain} and Lemma~\ref{formal4}.

\smallskip

Let $(\lambda^{in},z^{in},b^{in})\in  (0,+\infty)^2\times \RR$ to be chosen
with $\lambda^{in}\ll 1$, $z^{in}\gg 1$, $|b^{in}|\ll 1$. Let
$u(t)$ for $t\leq 0$ be the solution of~\eqref{nls} with data (see~\eqref{d:PP})
\be\label{init}
u(0,x)= \frac{1}{\lambda^{in}}\PP^{in}\left(\frac{x}{\lambda^{in}}\right)
\quad \hbox{where}\quad 
\PP^{in}(y)= \PP(y;(z^{in},b^{in},0))
\ee
(we   arbitrarily fix $\gamma^{in}=\beta^{in}=0$).
Note that $u(0)$ satisfies~\eqref{close} and, by continuity of the solution of~\eqref{nls} in $H^1$, it exists and satisfies~\eqref{close}  
on some maximal time interval $(t^{mod},0]$, where $t^{mod}\in [-\infty,0)$. 
Note also that by invariance by rotation of  equation~\eqref{nls}, $u(t)$ is invariant by the rotation $\tau_K$.
On $(t^{mod},0]$, we consider $(\vec p,\e)$  the decomposition of $u$
defined from Lemma~\ref{le:mod}. For   $s^{in}\gg 1$, we normalize the rescaled time $s$ as follows,
for $t\in (t^{mod},0]$,
\be\label{d:s}
s=s(t)=s^{in} - \int_t^{0} \frac{d\tau}{\lambda^2(\tau)}.
\ee
Observe from~\eqref{init} that
\be\label{c:in2}
\begin{aligned}
&\lambda(s^{in})=\lambda^{in},\quad  b(s^{in})=b^{in},\quad z(s^{in})=z^{in},\\
&\gamma(s^{in})=0,\quad \beta(s^{in})=0,\quad\e(s^{in})\equiv 0.
\end{aligned}\ee 
\begin{proposition}[Uniform backwards estimates]
\label{p:st}
There exists $s_0>10$ such that for all $s^{in}> s_0$, there exists a choice of  parameters
$(\lambda^{in},z^{in},b^{in})$ with
\be\label{c:in}
\begin{aligned}
&\left|  \left(\frac{2}{\kappa c_a}\right)^{\frac 12} (z^{in})^{-\frac 34} e^{\frac \kappa 2 z^{in}} - s^{in}\right|  < s^{in}\log^{-\frac 12} (s^{in}),\\
  &\lambda^{in} = \log^{-1}(s^{in}),\quad b^{in} =  \left(\frac{2c_a}{\kappa}\right)^{\frac 12} (z^{in})^{-\frac 14} e^{-\frac\kappa 2 z^{in}},  
\end{aligned}\ee
such that the solution $u$ of~\eqref{nls} corresponding to~\eqref{init} exists and satisfies~\eqref{close} on the rescaled interval of time $[s_0,s^{in}]$, the rescaled time $s$ being defined  in~\eqref{d:s}.
Moreover, the decomposition of $u$ given by Lemma~\ref{le:mod} on $[s_0,s^{in}]$ \[
u(s,x) = \frac {e^{i\gamma(s)}} {\lambda(s)} (\PP+\e)(s,y),\quad y=\frac{x}{\lambda(s)},
\]
satisfies the following uniform estimates, for all  $s\in [s_0,s^{in}]$,
\be\label{b:su}
\begin{aligned} 
&\left|z(s)- \frac{2}{\kappa}\log(s)\right|\lesssim \log(\log(s)),  \quad
\left| \lambda(s) - \log^{-1}(s)\right| \lesssim \log^{-\frac 32} (s),
\\
& \left|b(s) -  s^{-1} \log^{-1}(s)\right| +|\beta(s)|+\|\e(s)\|_{H^1}\lesssim s^{-1} \log^{-\frac 32}(s),\quad
|a(s)|\lesssim s^{-2}\log^{-1} (s).
\end{aligned}
\ee
\end{proposition}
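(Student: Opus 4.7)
\smallskip

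\noindent\emph{Proof plan for Proposition~\ref{p:st}.}
The proof will run as a backwards bootstrap argument on the rescaled interval $[s^\star,s^{in}]$, starting from the exact data~\eqref{c:in2} at $s=s^{in}$, and combined with a one-parameter topological (shooting) argument on $z^{in}$ to control the one unstable direction carried by~$b$. Concretely, I will assume the bounds~\eqref{b:su} hold on some maximal subinterval $[s^\star,s^{in}]$ with strict inequalities replaced by a constant~$C$, and  will aim to recover the same inequalities with a strictly better constant, so that $s^\star=s_0$ by continuity and maximality. Throughout, the orthogonality conditions~\eqref{ortho} and the coercivity~\eqref{e:co} play the decisive role in getting sharp control on $\varepsilon$.

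First, I will derive the modulation equations for $\vec p=(\lambda,z,\gamma,\beta,b)$. Differentiating the orthogonality conditions~\eqref{ortho} in~$s$ and using the equation $i\dot\varepsilon+\Delta\varepsilon-\varepsilon+\cdots=-\EE_\PP+L\varepsilon+N(\varepsilon)$ (where $L$ is the linearized operator around $\PP$ and $N$ is at least quadratic), one pairs the equation with the generators $\{\Lambda Q,\nabla Q, Q, yQ, |y|^2Q/4, i\rho\}$ in the generalized null space~\eqref{e:s}. Thanks to Lemma~\ref{pr:P} and~\eqref{b:iGk}, the projections of $\EE_\PP$ onto these directions produce the components $\vec m_1^a$ plus the key forcing term $-\kappa c_a\la\rho,Q\ra\, b z^{3/2} e^{-\kappa z}$ in the $iQ_a$-direction; the contributions of $L\varepsilon$ and $N(\varepsilon)$ are controlled by $\|\varepsilon\|_{H^1}$ and $\|\varepsilon\|_{H^1}^2$ respectively. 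This yields the simplified system~\eqref{formal3} plus an $O(\|\varepsilon\|_{H^1})$ remainder in every line, together with the equation for $a$ obtained by projecting onto $iQ_a$.

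The core technical step is the backwards energy estimate on $\varepsilon$. I will build a modified energy--virial functional of the form
\begin{equation*}
\mathcal F(s) = \tfrac12\int|\nabla\varepsilon|^2 + \tfrac12\int|\varepsilon|^2 - \tfrac14\int\bigl(|\PP+\varepsilon|^4 - |\PP|^4 - 4|\PP|^2\re(\overline\PP\varepsilon)\bigr) + \tfrac{b}{2}\la \Lambda\varepsilon,i\varepsilon\ra,
\end{equation*}
whose derivative along the rescaled flow, after using the modulation equations, is of the schematic form $\dot{\mathcal F} = O(z^{-1/2}e^{-\kappa z}\|\varepsilon\|_{H^1}) + O(\|\varepsilon\|_{H^1}^3) + O(b\|\varepsilon\|_{H^1}^2)$. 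Combined with coercivity~\eqref{e:co} and the orthogonality conditions~\eqref{ortho}, which eliminate precisely the directions in which $\mathcal F$ fails to be positive, this gives $\mathcal F(s)\gtrsim\|\varepsilon(s)\|_{H^1}^2$. Integrating backwards from $s^{in}$ where $\varepsilon(s^{in})=0$, using the bootstrap $|b|\lesssim s^{-1}\log^{-1}s$ and $z\sim\tfrac2\kappa\log s$, one obtains the strictly better bound $\|\varepsilon(s)\|_{H^1}\ll s^{-1}\log^{-3/2}s$.

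With the $\varepsilon$ bound in hand, the modulation equations become genuine ODEs with controlled remainders, and the equations for $(\lambda,z,\beta)$ close directly by backward integration against the prescribed asymptotics of Lemma~\ref{formal4}: $\lambda^{-1}\dot\lambda+b=O$(small) gives $\lambda(s)=\log^{-1}s+O(\log^{-3/2}s)$; the $\dot z$--equation gives $z\sim \tfrac2\kappa\log s$; and the contraction $\dot\beta+b\beta=O$(small) with $\beta(s^{in})=0$ gives $|\beta|\lesssim s^{-1}\log^{-3/2}s$. The remaining and \emph{hardest} step is $b$: linearizing the $\dot b=-b^2+a+O$ equation around the formal solution of Lemma~\ref{formal4} shows that the perturbation $\widetilde b(s):=b(s)-s^{-1}\log^{-1}s$ satisfies $\dot{\widetilde b} = c\,z^{1/2}e^{-\kappa z}\,\widetilde z+\cdots$, an unstable ODE when integrated backwards. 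This forces a shooting argument: viewing the map $z^{in}\mapsto (\widetilde b,\widetilde z)(s^\star)$ on the backward exit time $s^\star$, one proves that if $s^\star>s_0$ then the exit must occur through the $b$-direction, and then a standard intermediate value / Brouwer-type argument on the one-parameter family of $z^{in}$ (with $b^{in}$ prescribed by~\eqref{c:in}) selects a value for which $s^\star=s_0$. This final topological step, encoding the fact that we are constructing a threshold (unstable) dynamics as in~\cite{RaSz11,KLR}, is the principal obstacle of the proof.
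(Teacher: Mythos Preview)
Your overall architecture (bootstrap + modulation equations + energy functional + shooting) is right, but you have misidentified the structure of the parameter ODEs, and this is a genuine gap.

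You claim that $\lambda$ and $z$ close ``directly by backward integration'' while $b$ is the unstable direction requiring shooting. This ordering cannot work. The equation for $z$ is $\dot z = bz + O(s^{-1}\log^{-1}s)$, so to integrate it you must already know $b$; but under the bootstrap you only know $b$ up to a factor $2$, and integrating $\dot z\approx bz$ from $s^{in}$ down to $s$ then produces an error in $z$ of size $O(\log s)$, far too large to close the target $|z-\tfrac{2}{\kappa}\log s|\lesssim\log\log s$. The same obstruction prevents closing $\lambda$ before $b$. The paper resolves this by exploiting the special relation $b^{in}=(\tfrac{2c_a}{\kappa})^{1/2}(z^{in})^{-1/4}e^{-\kappa z^{in}/2}$ in~\eqref{c:in}: multiplying the $b$-equation by $b$ and using $\dot z\approx bz$ yields the almost-conservation law
\[
\Bigl|\,b^2-\tfrac{2c_a}{\kappa}\,z^{-1/2}e^{-\kappa z}\Bigr|\lesssim s^{-2}\log^{-3}s,
\]
which holds for all $s\in[s^*,s^{in}]$ because it holds exactly at $s^{in}$. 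This closes $b$ \emph{in terms of} $z$. Only then does one address $z$: setting $\zeta=(\tfrac{2}{\kappa c_a})^{1/2}z^{-3/4}e^{\kappa z/2}$, the equation becomes $|\dot\zeta-1|\lesssim\log^{-1}s$, and it is the bootstrap bound $|\zeta-s|\leq s\log^{-1/2}s$ that cannot be improved by integration (the allowed window shrinks as $s$ decreases). The shooting is therefore on $z^{in}$ (equivalently $\zeta^{in}$), not on $b$; your linearization of the $b$-equation misses the conservation law that removes one degree of freedom.

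A second gap: your energy argument assumes the orthogonality conditions~\eqref{ortho} alone yield coercivity via~\eqref{e:co}, but~\eqref{e:co} contains the term $\langle\eta,Q\rangle^2$, and $\langle\eta_1,Q\rangle=0$ is \emph{not} among the conditions~\eqref{ortho}. The paper recovers $|\langle\eta_1,Q\rangle|\lesssim s^{-2}\log^{-2}s$ separately, from $L^2$-conservation for $u$ combined with a ``minimal mass'' computation showing $|\,\|\PP(s)\|_{L^2}-\|\PP^{in}\|_{L^2}|\lesssim s^{-2}\log^{-2}s$ (this is precisely where the choice of $a(z)$ in~\eqref{d:a} is used). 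Without this step, neither the coercivity of your $\mathcal F$ nor the sharp modulation estimate $|\vec m_1^a|\lesssim s^{-2}\log^{-2}s$ (which needs $|\langle\eta_1,Q\rangle|$ small when projecting onto $i\Lambda Q$, since $L_+\Lambda Q=-2Q$) can be obtained. Finally, your global virial correction $\tfrac{b}{2}\langle\Lambda\varepsilon,i\varepsilon\rangle$ is not defined in $H^1$; the paper uses a \emph{localized} momentum $\sum_k b\,\mathrm{Im}\int(z_k\cdot\nabla\varepsilon)\bar\varepsilon\,\chi_k$ instead.
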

The key point in Proposition~\ref{p:st} is that $s_0$ and the constants in~\eqref{b:su} are independent of $s^{in}$ as $s^{in}\to +\infty$. 
Observe that estimates~\eqref{b:su} match the discussion of Sect.~\ref{s:2.2}.

\smallskip

The rest of this section is devoted to the proof of Proposition~\ref{p:st}. The proof relies on a bootstrap argument, integration of the  differential system of geometrical parameters  and  energy estimates.
We estimate $\e$ by standard energy arguments in the framework of multi-bubble solutions.
The particular regime of the geometrical parameters does not create any further difficulty. On the contrary, 
the special behavior  $b(s)\sim s^{-1}\log^{-1}(s)$ simplifies this part of the proof (see step 2 of the proof of Proposition~\ref{p:coe}).
We control the geometrical parameters of the bubbles in the bootstrap regime  adjusting the final data $(\lambda^{in},z^{in},b^{in})$.

\subsection{Bootstrap bounds}\label{s:34} The proof of Proposition~\ref{p:st} follows from 
bootstrapping the following estimates, chosen in view of the formal computations in Sect.~\ref{s:2.2},
\begin{equation}\label{BS}
\begin{aligned}
&  \left| \left(\frac{2}{\kappa c_a}\right)^{\frac 12} z^{-\frac 34} e^{\frac \kappa 2 z} - s\right|  \leq s\log^{-\frac 12} (s),
\\&   \frac 12 s^{-1} \log^{-1}(s) \leq  b(s) \leq 2  s^{-1} \log^{-1} (s),
\\ & |\beta(s)|\leq s^{-1} \log^{-\frac 3 2}(s) ,\quad  \|\e(s)\|_{H^1} \leq   s^{-1} \log^{-\frac 3 2}(s).
\end{aligned}
\end{equation}
Note that the estimate on $z$ in~\eqref{BS} immediately implies that, for $s$ large 
\be\label{BS2}
e^{-\kappa z} \lesssim s^{-2} \log^{-\frac 32}(s),\quad
\left| z(s) -\frac{2}{\kappa} \log(s)\right| \lesssim \log(\log(s)),\quad
|a(s)| \lesssim s^{-2} \log^{-1}(s).
\ee
For $s_0>10$ to be chosen large enough (independent of $s^{in}$), and all $s^{in} \gg s_0$, we define
\be\label{d:ss}
s^* = \inf \{ \tau \in [s_0,s^{in}];\ \hbox{\eqref{BS} holds on $[\tau,s^{in}]$}\}.
\ee

\subsection{Control of the modulation equations}
We claim the following bounds on the modulation system $\vec m^a_1$ and on the error $\EE_\PP$ given by~\eqref{d:mkM},~\eqref{d:EE}--\eqref{d:psik} in the bootstrap regime~\eqref{BS}.

\begin{lemma}[Pointwise control of the modulation equations and the error]
\label{l:m}
The following estimates hold on $[s^{*},s^{in}]$.
\be\label{b:m}
|\vec m^a_1(s)|\lesssim s^{-2} \log^{-2}(s).
\ee
\be\label{b:os}
|\la \eta_1(s), Q\ra| \lesssim s^{-2} \log^{-2}(s),
\ee
\be\label{b:ode1}
|\dot z -b z| \lesssim s^{-1} \log^{-1}(s), \quad |\dot \beta|+|\dot b   -a|\lesssim s^{-2} \log^{-2}(s).
\ee
Moreover,  for all $s\in [s^*,s^{in}]$, for all $y\in \RR^2$,
\be\label{b:EE}
 {|\EE_\PP(s,y)|+|\nabla\EE_\PP(s,y)|}
\lesssim s^{-2} \log^{-2}(s)  \,  {\sum_k Q^{1/2}(y-z_k(s))}   .
\ee
\end{lemma}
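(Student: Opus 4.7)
The plan is to derive the evolution equation for the remainder $\e$, turn the orthogonality conditions~\eqref{ortho} into a closed linear system on $\vec m_1^a$, and then read off the remaining statements. Substituting $v=\PP+\e$ into~\eqref{e:v} and subtracting~\eqref{d:EE} yields
\[
i\partial_s \e + \Delta\e - \e - i\lsl\Lambda\e + (1-\dot\gamma)\e + \bigl(|\PP+\e|^2(\PP+\e)-|\PP|^2\PP\bigr) = -\EE_\PP,
\]
and after passing to $\eta_1(s,y)=e^{-i\Gamma_1(s,y)}\e(s,y+z_1(s))$ one obtains an equation for $\eta_1$ in which the linearized operator around $Q$ appears and the source decomposes via~\eqref{d:psik}. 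Differentiating each of the seven scalar orthogonality conditions in~\eqref{ortho} in $s$ and substituting this equation reduces the problem to solving a linear system for the components of $\vec m_1^a$, whose matrix has entries given by inner products of $\vec\md Q_a$ against the test functions $|y|^2Q$, $yQ$, $i\rho$, $i\nabla Q$, $i\Lambda Q$. The generalized null-space relations~\eqref{e:s} show that this matrix is essentially triangular with nonzero diagonal up to $O(|a|)$ perturbations, hence invertible.

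The right-hand side of the system collects the projections of $G_1$, $\Psi_{Q_a}$, $i\dot z a'(z)\rho$, the quadratic remainder in $\eta_1$, and the interaction cross terms. By~\eqref{b:Gk} one has $\|G_1\|_{L^\infty}\lesssim s^{-2}\log^{-3/2}(s)$ and $\|\Psi_{Q_a}\|_{L^\infty}\lesssim |a|^2\lesssim s^{-4}\log^{-2}(s)$; the quadratic term is $O(\|\e\|_{H^1}^2)=O(s^{-2}\log^{-3}(s))$; and the bootstrap controls $|\dot z\,a'(z)|$. All of these are comfortably $O(s^{-2}\log^{-2}(s))$ except on the $\dot b$ equation, where the crude pointwise control of $G_1$ is short by a $\log^{1/2}$ factor. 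On that line I would invoke the sharp expansion~\eqref{b:iGk}, which produces $\la G_1,iQ_a\ra\simeq -\kappa c_a\la\rho,Q\ra b z^{3/2}e^{-\kappa z}$; the forcing $i\dot z a'(z)\rho$ in $\Psi_1$ contributes $\dot z\,a'(z)\la\rho,Q\ra$ to the same projection. Combining $\dot z = bz + O(s^{-1}\log^{-1}(s))$, read off the second line of $\vec m_1^a$ from the diagonal block of the system, with $a'(z)\sim\kappa c_a z^{1/2}e^{-\kappa z}$ from~\eqref{d:a}, yields an exact cancellation at leading order and leaves only $O(s^{-2}\log^{-2}(s))$. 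Solving the system then gives~\eqref{b:m}, from which~\eqref{b:ode1} follows by inspecting the entries of $\vec m_1^a$ and using $\lsl=-b+O(s^{-2}\log^{-2}(s))$.

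The bound~\eqref{b:os} on $\la\eta_1,Q\ra$ is not in the orthogonality set~\eqref{ortho} and I would recover it from $L^2$-mass conservation: since $\e(s^{in})=0$, at every $s$ one has $2\la\PP,\e\ra+\|\e\|_{L^2}^2 = \|\PP^{in}\|_{L^2}^2-\|\PP\|_{L^2}^2$. Expanding $\|\PP\|_{L^2}^2 = K\|Q\|_{L^2}^2 + 2Ka\la Q,\rho\ra + O(z^{-1/2}e^{-\kappa z}+a^2)$, which is the almost-conservation property motivating the choice of $c_a$ recalled after~\eqref{d:a}, bounds this difference by $s^{-2}\log^{-2}(s)$; at the same time $\la\PP,\e\ra = K\la Q_a,\eta_1\ra + O(z^{-1/2}e^{-\kappa z}\|\e\|_{L^2})$ by exponential localization of $\eta_1$ around bubble $1$ and $\tau_K$-symmetry, and isolating $\la Q,\eta_1\ra$ yields~\eqref{b:os}. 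Finally,~\eqref{b:EE} follows pointwise from~\eqref{d:psik}: the linear term $\vec m_1^a\cdot\vec\md Q_a$ is bounded using~\eqref{b:m} together with~\eqref{as:q} and~\eqref{d:rh}; the forcing $i\dot z a'(z)\rho$ using~\eqref{b:ode1} and $|a'(z)|\lesssim z^{1/2}e^{-\kappa z}$; the interaction $G_k$ using the pointwise bound $|Q_a(y-z_k)Q_a(y-z_j)|\lesssim z^{-1/2}e^{-\kappa z}\,Q^{1/2}(y-z_k)$ for $j\neq k$; and $\Psi_{Q_a}$ directly via $|a|^2$. Each contribution is bounded by $s^{-2}\log^{-2}(s)\,Q^{1/2}(y-z_k)$, and since differentiating only multiplies these exponential envelopes by polynomial weights, the same estimate applies to $\nabla\EE_\PP$. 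The main obstacle is precisely closing the $\dot b$ equation: the room in~\eqref{BS} is only a single $\log^{1/2}$ factor, the two leading contributions $\la G_1,iQ_a\ra$ and $\dot z a'(z)\la\rho,Q\ra$ are both of order $s^{-2}\log^{-3/2}(s)$, and it is the algebraic match provided by~\eqref{d:a} and~\eqref{b:iGk} that allows the system to close in the bootstrap regime.
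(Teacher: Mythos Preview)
Your outline has the right ingredients but misplaces the key obstacle, and this creates a circularity that your argument does not close. First, a minor arithmetic point: from~\eqref{b:Gk} and~\eqref{BS2} one has $\|G_1\|_{L^\infty}\lesssim z^{-1/2}e^{-\kappa z}\lesssim s^{-2}\log^{-2}(s)$, not $s^{-2}\log^{-3/2}(s)$; you dropped the $z^{-1/2}$ factor. So $G_1$ is \emph{not} the problematic term in the modulation system, and the elaborate cancellation you invoke there via~\eqref{b:iGk} (projecting onto $iQ_a$) is both unnecessary and aimed at the wrong test function: the $\dot b$ equation comes from differentiating $\la\eta_1,i\Lambda Q\ra=0$, whose test direction is $\Lambda Q$, not $iQ_a$. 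The actual obstacle is the linear term: by $L_+\Lambda Q=-2Q$ (see~\eqref{e:s}), differentiating $\la\eta_1,i\Lambda Q\ra=0$ produces $2\la\eta_1,Q\ra$ on the right-hand side, and this quantity is \emph{not} among the orthogonality conditions~\eqref{ortho}. Hence~\eqref{b:m} cannot be obtained before controlling $\la\eta_1,Q\ra$, and your derivation of~\eqref{b:os} in turn uses~\eqref{b:m} implicitly (through the mass almost-conservation), so the argument is circular as written.

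The paper breaks the loop by introducing a secondary bootstrap: one assumes $|\la\eta_1,Q\ra|\le C^{**}s^{-2}\log^{-2}(s)$ on $[s^{**},s^{in}]$, derives $|\vec m_1^a|\lesssim C^{**}s^{-2}\log^{-2}(s)$, and then closes $C^{**}$ by proving~\eqref{b:os} with an implicit constant \emph{independent} of $C^{**}$. This last step is where the cancellation you describe is actually used, but dynamically rather than statically: one computes $\tfrac12\tfrac{d}{ds}\|\PP\|_{L^2}^2=\la\EE_\PP,i\PP\ra$ and shows each piece $\la\Psi_k,iQ_a\ra$ is $O(s^{-3}\log^{-2}(s))$, the key point being that $\la i\dot z\,a'(z)\rho+G_1,iQ_a\ra$ is small precisely because of~\eqref{b:iGk} and the choice of $a(z)$ in~\eqref{d:a}. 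Your static expansion $\|\PP\|_{L^2}^2=K\|Q\|_{L^2}^2+2Ka\la Q,\rho\ra+O(z^{-1/2}e^{-\kappa z}+a^2)$ does not capture this: the term $2Ka\la Q,\rho\ra$ alone varies by $O(|a|)\sim s^{-2}\log^{-1}(s)$ between $s$ and $s^{in}$, which is a full $\log$ factor too large, and there is no evident static cancellation with the cross terms $\la P_j,P_k\ra$ (those involve $\int Q(y)Q(y-\omega)\,dy$, not $\int Q^3(y)Q(y-\omega)\,dy$). So you must either carry out the dynamic computation $\tfrac{d}{ds}\|\PP\|_{L^2}^2$ or exhibit an explicit static cancellation; in either case, the auxiliary bootstrap on $\la\eta_1,Q\ra$ is needed first to make the $\vec m_1^a$ contributions harmless.
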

\begin{proof}[Proof of Lemma~\ref{l:m}] The proofs of the first two estimates are to be combined. Since $\e(s^{in})\equiv 0$, we may define
\[
s^{**} = \inf\{s\in [s^*,s^{in}];\ |\la \eta_1(\tau), Q\ra| \leq C^{**} \tau^{-2} \log^{-2}(\tau) \hbox{ holds on $[s,s^{in}]$}\},
\]
for some constant $C^{**}>0$ to be chosen large enough. We work on the interval $[s^{**},s^{in}]$.

\smallskip

\noindent\textbf{step 1} Equation of $\e$ and change of variable. Let $v=\PP+\e$ in~\eqref{d:v}. It follows from~\eqref{e:v},~\eqref{d:EE} that
\be\label{e:e}
i\dot \e + \Delta \e - \e + \left( |\PP+\e|^2(\PP+\e)-|\PP|^2 \PP\right)
-i\lsl \Lambda \e + (1-\dot \gamma)\e +\EE_{\PP}=0.
\ee
By rotation symmetry (see Lemma~\ref{le:mod}) it is enough to understand the orthogonality for $\eta_1$.
Thus, we change the space variable to match the one of the bubble $P_1$.
Recall that we have defined
$
\e(s,y) = \big[e^{i \Gamma_1} \eta_1\big](s,y-z_1).
$
Denote
\[
\PP(s,y)= \big[e^{i \Gamma_1} \PP_1\big](s,y-z_1),\quad
\EE_{\PP}(s,y)=\big[ e^{i \Gamma_1}\EE_{\PP_1}\big](s,y-z_1).
\]
We  rewrite the equation of $\e$ into the following equation for $\eta_1$
(see also step 1 of the proof of Lemma~\ref{pr:P})
\be\label{e:eta}
i\dot \eta_1 + \Delta \eta_1 - \eta_1 + (|\PP_1+\eta_1|^2(\PP_1+\eta_1) - |\PP_1|^2 \PP_1)   
+ \vec m^0_1\cdot \vec \md \eta_1 +\EE_{\PP_1}=0,
\ee
Recall also that $\eta_1$ satisfies the orthogonality conditions~\eqref{ortho}.

\smallskip

\noindent\textbf{step 2} General null space like computation.
Let $A(y)$ and $B(y)$ be two real-valued functions in $\mathcal Y$.
We claim the following estimate on $[s^{**},s^{in}]$
\be\label{o:gen}
\left| \frac{d}{ds}\la \eta_1 , A+iB\ra  
- \left[ \la \eta_1, i L_- A - L_+ B \ra - \la \vec m^a_1 \cdot \vec\md Q, i A- B\ra \right]  \right|
\lesssim s^{-2} \log^{-2}(s)+s^{-1} |\vec m^a_1|.
\ee
We compute from~\eqref{e:eta},
\[\label{etab}\begin{split}
\frac{d}{ds}\la \eta_1 , A+iB\ra &= \la \dot\eta_1 , A+iB\ra =\la  i\dot \eta_1 , iA-B\ra\\
& = \la -\Delta \eta_1 + \eta_1 - (2 Q^2 \eta_1 + Q \overline\eta_1), iA-B\ra\\
& -\la |\PP_1+\eta_1|^2 (\PP_1+\eta_1) - |\PP_1|^2\PP_1 -2 Q^2 \eta_1 -Q^2 \overline\eta_1 ,iA-B\ra \\&
- \la \vec m^0_1\cdot \vec\md\eta_1,iA-B \ra  - \la \EE_{\PP_1} ,iA-B\ra.
\end{split}\]
First, since $A$ and $B$ are real-valued, we have
\[
\la -\Delta \eta_1 + \eta_1 - (2 Q^2 \eta_1 + Q \overline\eta_1), iA-B\ra 
=\la \eta_1, i L_- A - L_+ B \ra.
\]
Second, note that
\begin{align*}
&|\PP_1+\eta_1|^2 (\PP_1+\eta_1) - |\PP_1|^2\PP_1-2 Q^2 \eta_1 -Q^2 \overline\eta_1 \\
& = 2 (|\PP_1|^2-Q^2) \eta_1 + (\PP_1^2-Q^2) \overline\eta_1 
+2\PP_1 |\eta_1|^2 +\overline \PP_1 \eta_1^2 + 2 \PP_1|\eta_1|^2 + |\eta_1|^2 \eta_1,
\end{align*}
and recall the expression of $\PP_1$
\[
\PP_1= Q + a\rho + \sum_{k=2}^K e^{i(\Gamma_k(y-(z_k-z_1))-\Gamma_1(y))} Q_a(y-(z_k-z_1)).
\]
Therefore,  using $A,B\in \mathcal Y$,~\eqref{BS}--\eqref{BS2} and $|z_k-z_j|\geq \kappa z$, for $k\neq j$,
 we have, for some $q>0$,
\begin{align*}
&|\la (|\PP_1|^2-Q^2)\eta_1,iA-B\ra|+|\la (\PP_1^2 - Q^2)\overline \eta_1,iA-B\ra|
\\&\lesssim (|a|+z^q e^{-\kappa z}) \|\eta_1\|_{L^2}
\lesssim   s^{-3} \log^{q}(s).
\end{align*}
Next,
\[
|\la 2\PP_1 |\eta_1|^2 +\overline \PP_1 \eta_1^2 + 2 \PP_1|\eta_1|^2 ,iA-B\ra|
\lesssim \|\e\|_{L^2}^2 \lesssim  s^{-2}\log^{-3}(s),
\]
\[
|\la |\eta_1|^2 \eta_1, iA-B\ra| \lesssim  \la |\e|^3  , |A|+|B|\ra 
\lesssim \|\e\|_{H^1}^3 \lesssim  s^{-3} \log^{-\frac 92}(s) .
\]
Third, we have, using~\eqref{BS}--\eqref{BS2}, integration by parts,
\begin{align*}
|\la \vec m^0_1\cdot \vec\md\eta_1,iA-B \ra|
&\lesssim \left|\la \vec m^a_1\cdot \vec\md\eta_1,iA-B \ra\right|+\left|\la (\vec m^a_1-\vec m^0_1)\cdot \vec\md\eta_1,iA-B \ra\right|\\
&\lesssim s^{-1} \log^{-\frac 32}(s) |\vec m^a_1| +s^{-3} \log^{-\frac 52}(s).
\end{align*}
Finally, we claim the following estimate, which is enough to complete the proof of~\eqref{o:gen}.
\be\label{o:gen2}
\left| \la \EE_{\PP_1} , iA-B \ra - \la \vec m_1^a \cdot \vec \md Q,iA-B\ra \right|
\lesssim s^{-2} \log^{-2}(s) + s^{-1} |\vec m_1^a|.
\ee
Indeed, recall the expression of $\EE_{\PP_1}$ (from~\eqref{d:EE}--\eqref{d:psik})
\begin{align*}
\EE_{\PP_1} 
&= \Psi_1 + \sum_{k=2}^K e^{i(\Gamma_k(y-(z_k-z_1))-\Gamma_1(y))} \Psi_k(y-(z_k-z_1)).
\\& = \vec m_1^a \cdot \vec \md Q_a +i \dot  z a'(z) \rho + G_1 +\Psi_{Q_a}
+\sum_{k=2}^K e^{i(\Gamma_k(y-(z_k-z_1))-\Gamma_1(y))} \Psi_k(y-(z_k-z_1)). 
\end{align*}
First, since $Q_a= Q+a \rho$, by~\eqref{BS}--\eqref{BS2},
\[
\left| \la \vec m_1^a \cdot \vec \md (Q_a-Q),iA-B\ra\right|
\lesssim |a| |\vec m_1^a| \lesssim s^{-2} \log^{-1}(s) |\vec m_1^a|.
\]
Second, from~\eqref{BS}--\eqref{BS2},
\be\label{pz}\begin{split}
|\la \dot  z a'(z) \rho  ,iA-B\ra| 
&\lesssim  |a'(z)| |\dot z|
\lesssim s^{-2}\log^{-1}(s) ( |\vec m_1^a| + |\beta| + |\lsl| z) \\
&\lesssim s^{-2}\log^{-1}(s) ((z+1) |\vec m_1^a| +|\beta|+ |b| z )\\
&\lesssim s^{-2} |\vec m_1^a|+ s^{-3}\log^{-1}(s).
\end{split}\ee
Third, from~\eqref{b:Gk} and~\eqref{BS}--\eqref{BS2},
\[
|\la G_1, iA-B\ra|\lesssim \|G_1\|_{L^\infty} \lesssim z^{-\frac 12} e^{-\kappa z} \lesssim s^{-2} \log^{-2}(s).
\]
Fourth, from~\eqref{b:Gk} and~\eqref{BS}--\eqref{BS2},
\[
|\la \Psi_{Q_a}, iA-B\ra|\lesssim \|\Psi_{Q_a}\|_{L^\infty} \lesssim |a|^2 \lesssim s^{-4} \log^{-2}(s).
\]
Last, since $A,B\in \mathcal Y$,  for $k\geq 2$, we have
\[
|\la e^{i(\Gamma_k(y-(z_k-z_1))-\Gamma_1(y))} (\vec m_k^a \cdot \vec \md Q_a(.-(z_k-z_1))), iA-B\ra|
\lesssim s^{-1} |\vec m_k^a|,
\] 
and, proceeding as before for the other terms in $\Psi_k$, we obtain
\[
|\la e^{i(\Gamma_k(y-(z_k-z_1))-\Gamma_1(y))} \Psi_k(y-(z_k-z_1)) , iA-B\ra|\lesssim s^{-1} |\vec m_k^a| + s^{-2} \log^{-2}(s).
\] 
The proof of~\eqref{o:gen2} is complete.

\smallskip

\noindent{\textbf{step 3}} Modulation equations. We now use~\eqref{ortho} and~\eqref{o:gen}  to control the modulation vector $\vec m^a_1$.
Using~\eqref{o:gen}, we   draw one by one  the consequences of the orthogonality relations~\eqref{ortho}.
Note that the special orthogonality conditions~\eqref{ortho}, related to cancellations  (see~\eqref{e:s}) are crucial in these computations.

\smallskip

\noindent $\underline{\la \eta_1,|y|^2 Q \ra = 0}$. Let $A=|y|^2 Q$ and $B=0$. 
Since $L_- (|y|^2 Q) = -4 \Lambda Q$, $\la \eta_1, i \Lambda Q\ra =0$ and
$\la \vec m^a_1\cdot \vec \md Q, i |y|^2 Q\ra = - (b+\lsl) \la \Lambda Q, |y|^2 Q\ra = c_1 (b+\lsl)$, where $c_1\neq 0$, we obtain
\be\label{md1}
\Big| b + \lsl \Big| \lesssim s^{-2} \log^{-2}(s)+s^{-1} |\vec m^a_1|. 
\ee

\smallskip

\noindent $\underline{\la \eta_1, yQ \ra = 0}$. Let $A=y Q$ and $B=0$. 
Since $L_- (y Q) = -2 \nabla  Q$, $\la \eta_1, i \nabla Q\ra =0$ and
$\la \vec m^a_1\cdot \vec \md Q, i y Q\ra = - (\dot z -2 \beta + \lsl z) \la \nabla Q, y Q\ra = c_2 (\dot z -2 \beta + \lsl z)$, where $c_2\neq 0$, we obtain
\be\label{md2}
\Big| \dot z -2 \beta + \lsl z  \Big| \lesssim s^{-2} \log^{-2}(s)+s^{-1} |\vec m^a_1|. 
\ee

\smallskip

\noindent $\underline{\la \eta_1, i \rho \ra = 0}$. Let $A=0$ and $B=\rho$. 
Since $L_+ \rho = |y|^2  Q$, $\la \eta_1, |y|^2 Q\ra =0$ and
\[
\la \vec m^a_1\cdot \vec \md Q, |y|^2 Q \ra = c_3 (\dot \gamma - 1 + \beta^2- \lsl \beta z-\beta \dot z)
+c_4 (\dot b + b^2 - 2 b(b+\lsl) - a),   
\]
where $c_3,c_4\neq 0$, we obtain, for some $c$,
\be\label{md3}
\Big|  (\dot \gamma - 1 + \beta^2- \lsl \beta z-\beta \dot z)
+c(\dot b + b^2 - 2 b(b+\lsl) - a) \Big| \lesssim s^{-2} \log^{-2}(s)+s^{-1} |\vec m^a_1|. 
\ee

\smallskip

\noindent $\underline{\la \eta_1, i\nabla Q \ra = 0}$. Let $A=0$ and $B=\nabla Q$. 
Since $L_+ \nabla Q = 0$, and 
$\la \vec m^a_1\cdot \vec \md Q,\nabla Q\ra = c_5(\dot \beta -\lsl \beta+ \frac b2(\dot z- 2\beta + \lsl z )$, where $c_5\neq 0$, we obtain
\be\label{md4}
\Big|  \dot \beta -\lsl \beta  + \frac b2(\dot z- 2\beta + \lsl z ) \Big| \lesssim s^{-2} \log^{-2}(s)+s^{-1} |\vec m^a_1|. 
\ee

\smallskip

\noindent $\underline{\la \eta_1, i\Lambda Q \ra = 0}$. Let $A=0$ and $B=\Lambda Q$. 
Note that $L_+ (\Lambda Q) = -2Q$, and by the definition of $s^{**}$, $|\la \eta_1, Q \ra| \lesssim C^{**} s^{-2} \log^{-2}(s)$. Moreover,
$\la \vec m^a_1\cdot \vec \md Q,\nabla Q\ra = c_6(\dot b + b^2 - 2b(b+\lsl) - a)$, where $c_6\neq 0$, so that we obtain
\be\label{md5}
\Big|\dot b + b^2 - 2b(b+\lsl) - a \Big| \lesssim C^{**} s^{-2} \log^{-2}(s)+s^{-1} |\vec m^a_1|. 
\ee

\smallskip

\noindent Combining~\eqref{md1}--\eqref{md5}, we have proved, for all  $s\in[s^{**},s^{in}]$,
\begin{equation}\label{dstar}
|\vec m^a_1(s)|\lesssim C^{**} s^{-2} \log^{-2}(s).
\end{equation}

\noindent\textbf{step 4} Minimal mass property of the approximate solution. The proof of the degeneracy estimate~\eqref{b:os} relies on the following minimal mass property for the ansatz $\PP$ under the bootstrap assumptions~\eqref{BS}:
\be\label{b:mP}
\left| \|\PP(s)\|_{L^2} - \|\PP^{in}\|_{L^2} \right| \lesssim s^{-2}\log^{-2}(s).
\ee
Note that the implicit constant on the right-hand side does not depend on $C^{**}$. By the definition~\eqref{d:psik} of $\EE_\PP$, we have
\[
\frac 12\frac{d}{ds}\|\PP\|_{L^2}^2 = \la i \dot \PP,i\PP\rangle = \la \EE_\PP,i\PP\ra.
\]
In view of the formula for $\EE_\PP$~\eqref{d:psik}, and the definition of $\PP= \sum_j P_j$,~\eqref{b:mP} follows by integration of the following estimate: for all $j,k\in \{1,\ldots,K\}$,
\be\label{jk}
|\la \big[e^{i\Gamma_k} \Psi_k\big] (y-z_k),i \big[e^{i\Gamma_j} Q_a\big](y-z_j)\ra|\lesssim s^{-3} \log^{-2}(s).
\ee

\noindent\emph{Proof of~\eqref{jk}}. We start by proving~\eqref{jk} in the case $j=k=1$.
From~\eqref{d:psik}:
\[
\la \big[e^{i\Gamma_k} \Psi_k \big](y-z_k),i \big[e^{i\Gamma_k} Q_a\big](y-z_k)\ra 
= \la \Psi_1,iQ_a\ra = \la \vec m^a_1\cdot \vec\md Q_a+i \dot z a'(z) \rho+G_1+\Psi_{Q_a},iQ_a\ra.
\]
Note that $\la\vec\md Q,iQ\ra=0$. Thus, by~\eqref{dstar},~\eqref{BS}--\eqref{BS2},
\[
|\la \vec m^a_1\cdot \vec\md Q_a ,iQ_a\ra|\lesssim |a| |\vec m^a_1|\lesssim C^{**} s^{-4} \log^{-3}(s)
\lesssim s^{-3}\log^{-3}(s).
\]
Next, we claim the following estimate, which justifies   the special choice of $a(z)$ done in~\eqref{d:a}
(see also Sect.~\ref{s:2.2})
\be\label{c:a}
| \la i \dot z a'(z) \rho+G_1  ,iQ_a\ra|\lesssim s^{-3} \log^{-2}(s).
\ee
Indeed, first by~\eqref{BS}--\eqref{BS2} and~\eqref{b:iGk},
\be\label{tr}
|\la G_1,iQ_a\ra + \kappa c_a \la \rho,Q\ra b z^{\frac 32} e^{-\kappa z} | \lesssim
s^{-3} \log^{-2}(s).
\ee
Second, we note that by~\eqref{dstar} and~\eqref{BS}--\eqref{BS2},
\be\label{sm}
|\dot z - bz|\lesssim s^{-1} \log^{-1}(s),
\ee
 and that by the definition of $a(z)$ in~\eqref{d:a},
\be\label{ma}
|a'(z)-c_a \kappa z^{\frac 12} e^{-\kappa z} |\lesssim  z^{-\frac 12} e^{-\kappa z}\lesssim s^{-2}\log^{-2}(s).
\ee
Gathering~\eqref{tr}--\eqref{ma}, we obtain~\eqref{c:a}.
Finally, since $Q_a$ and $\Psi_{Q_a}$ given by~\eqref{d:PQa} are real-valued, we have the cancellation
\[
\la\Psi_{Q_a},iQ_a\ra=0.
\]
The collection of above estimates concludes the proof of~\eqref{jk} for $j=k=1$.

\noindent
We now prove~\eqref{jk} in the case $k=1$ and $j\in \{2,\ldots,K\}$.
Note that
\begin{align*}
&|\la \big[e^{i\Gamma_k} \Psi_k\big] (y-z_k),i \big[e^{i\Gamma_j} Q_a\big](y-z_j)\ra|\\
&= |\la \vec m^a_1\cdot \vec\md Q_a+i \dot z a'(z) \rho+G_1+\Psi_{Q_a} ,i e^{i(\Gamma_j(y-(z_j-z_k))-\Gamma_k(y))} Q_a(y-(z_j-z_k))\ra|
\end{align*}
First, by~\eqref{dstar}, for some $q>0$,
\begin{align*}
&|\la \vec m^a_1\cdot \vec\md Q_a,i e^{i(\Gamma_j(y-(z_j-z_k))-\Gamma_k)} Q_a(y-(z_j-z_k))\ra|
\\&\lesssim |\vec m^a_1| z^q e^{-\kappa z} \lesssim C^{**} s^{-4} \log^q (s) 
\lesssim s^{-3} \log^{-2}(s).
\end{align*}
Second, using similar arguments,  for some $q>0$,
\[
|\la  i \dot z a'(z) \rho+G_1+\Psi_{Q_a} ,i e^{i(\Gamma_j(y-(z_j-z_k))-\Gamma_k)} Q_a(y-(z_j-z_k))\ra|
\lesssim s^{-4}\log^q (s).
\]
The collection of above estimates concludes the proof of~\eqref{b:mP}.

\smallskip

\noindent{\bf step 5} Proof of~\eqref{b:os}. The conservation of mass for the solution $u$ and~\eqref{init} imply:
\[
\|u(s)\|_{L^2}   =\|u(s^{in})\|_{L^2} = \|\PP^{in}\|_{L^2}.
\]
By ~\eqref{dec},
\[
\la\e(s),\PP\ra = \frac{1}{2}\left( \|u(s)\|_{L^2}^2 - \|\PP(s)\|_{L^2}^2-\|\e(s)\|_{L^2}^2\right).
\]
Therefore, using~\eqref{BS}--\eqref{BS2} and~\eqref{b:mP}, we obtain
\[
|\la\e(s),\PP\ra|\lesssim s^{-2}\log^{-2}(s).
\]
Now, we use the symmetry $\la \e,P_k\ra = \la \e,P_j\ra=K^{-1} \la\e(s),\PP\ra$ for all $j,k\in \{1,\ldots,K\}$.
Moreover, by~\eqref{BS}--\eqref{BS2},
\[
\la\e(s),P_1\ra = \la \eta_1,Q_a\ra = \la \eta_1,Q\ra + O(|a|\|\e\|_{L^2})
=\la \eta_1,Q\ra + O(s^{-3}\log^{-\frac 52}(s)).
\]
Gathering this information, we obtain $|\la \eta_1,Q\ra |\lesssim s^{-2}\log^{-2}(s)$,
i.e. estimate~\eqref{b:os}. In particular, choosing $C^{**}$ large enough, we have $s^{**}=s^*$.

\smallskip

\noindent{\bf step 6} Conclusion. The estimate~\eqref{b:ode1} is   a direct consequence of~\eqref{b:m} and~\eqref{BS}--\eqref{BS2}. We now turn to the  proof of~\eqref{b:EE}. Using~\eqref{b:m},~\eqref{BS}--\eqref{BS2} and~\eqref{d:rh},
\[
|\dot z a'(z) \rho| 
\lesssim Q^{\frac 12} (|b| z + s^{-1} \log^{-1}(s)) s^{-2} \log^{-1}(s) 
\lesssim Q^{\frac 12} s^{-3} \log^{-1}(s).
\]
By~\eqref{b:m},
\[
|\vec m^a_1 \cdot \vec {\rm M} Q_a|\lesssim Q^{\frac 12} s^{-2}\log^{-2}(s).
\]
Next, by the definition of $G_k$ in~\eqref{d:Gk}, the decay $|\rho|\lesssim Q^{\frac 78}$ (see~\eqref{d:rh}) and $|{\rm e}_k-{\rm e}_1| \geq \kappa$ for $k\neq 1$,
\begin{align*}
|G_1|&\lesssim Q^{\frac 12}   \sum_{k=2}^K
\left(Q^{\frac 54}(y)Q(y-z(\ek-{\rm e}_1)) + |a| Q^{\frac 54}(y)Q^{\frac 78}(y-z(\ek-{\rm e}_1))\right)
\\& \lesssim Q^{\frac 12} (z^{-\frac 12} e^{-\kappa z} + s^{-2} e^{-\frac \kappa 2 z})
 \lesssim Q^{\frac 12} s^{-2} \log^{-2}(s).
\end{align*}
Finally, by the definition of $\Psi_{Q_a}$ in~\eqref{d:PQa},
\[
|\Psi_{Q_a}|\lesssim Q^{\frac 12} |a|^2 \lesssim Q^{\frac 12} s^{-4} \log^{-2}(s).
\]
The same estimates hold for $\nabla \EE$, which finishes the proof of~\eqref{b:EE}.
\end{proof}
 
 
\subsection{Energy functional}\label{s:5}


Consider the   nonlinear energy functional for $\e$ 
\[
\HH(s,\e) =
\frac 12 \int \left( |\nabla \e|^2 + |\e|^2 - \frac 12 \left( |\PP+\e|^4 - |\PP|^4 - 4 |\PP|^2 \re(\e \overline \PP)\right)\right).
\]
Pick a smooth function $\chi:[0,+\infty)\to [0,\infty)$, non increasing, with $\chi\equiv 1$ on $[0,\frac 1{10}]$, $\chi\equiv 0$ on $[\frac 18,+\infty)$. We define the localized momentum:
\[
\JJ = \sum_{k} J_k,\quad J_k (s,\e) =  b \im \int   ( z_k\cdot   \nabla \e) \bar \e \chi_k,
\quad \chi_k (s,y) = \chi\left(\log^{-1}(s) |y-z_k(s)|\right).
\]
Finally, set
\[
\FF(s,\e) = \HH(s,\e) - \JJ(s,\e).
\]
The functional $F$ is coercive in $\e$ at the main order and it is an almost conserved quantity for the problem.

\begin{proposition}[Coercivity and time control of the   energy functional]
\label{p:coe}
For all $s\in [s^{*},s^{in}]$,
\be
\label{coercvff}
\FF(s,\e(s)) \gtrsim \|\e(s)\|_{H^{1}}^{2} + O(s^{-4}\log^{-4}(s)),
\ee
and 
\be
\label{monotnoivnon}
\left|\frac{d}{ds} [\FF(s,\e(s))]\right| \lesssim s^{-2} \log^{-2}(s) \|\e(s)\|_{H^1} + s^{-1} \log^{-1}(s) \|\e(s)\|_{H^1}^2.
\ee
\end{proposition}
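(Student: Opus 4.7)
I would handle the two claims separately. For the coercivity~\eqref{coercvff}, I expand the quartic density,
\[
|\PP+\e|^4 - |\PP|^4 - 4|\PP|^2\re(\e\bar\PP) = 2|\PP|^2|\e|^2 + 4(\re(\PP\bar\e))^2 + O(|\PP||\e|^3 + |\e|^4),
\]
so that the quadratic part of $\HH$ reads $\frac12\int(|\nabla\e|^2 + |\e|^2 - |\PP|^2|\e|^2) - \int(\re(\PP\bar\e))^2$, with a cubic/quartic remainder controlled by $\|\e\|_{H^1}^3$ via the bootstrap $\|\e\|_{H^1}\ll 1$. By~\eqref{BS}--\eqref{BS2} the bubble centers satisfy $|z_k - z_j|\geq \kappa z\gtrsim\log(s)$, so cross terms $P_k\bar P_j$ with $j\neq k$ are pointwise $O(e^{-\kappa z/2})$ and contribute $O(s^{-1}\|\e\|_{H^1}^2)$ to the quadratic form. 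Changing variables $\eta_k(y) = e^{-i\Gamma_k(y)}\e(y+z_k)$ and using the rotational invariance from Lemma~\ref{le:mod} so that each $\eta_k$ is a rotate of $\eta_1$, the quadratic part reduces to
\[
\frac{K}{2}\big(\la L_+\re\eta_1,\re\eta_1\ra + \la L_-\im\eta_1,\im\eta_1\ra\big) + O\big((s^{-1} + |a|)\|\e\|_{H^1}^2\big).
\]
The coercivity estimate~\eqref{e:co} now applies: four of the five degenerate directions are killed by~\eqref{ortho}, and the fifth produces the $O(s^{-4}\log^{-4}(s))$ error via $|\la\eta_1,Q\ra|\lesssim s^{-2}\log^{-2}(s)$ from~\eqref{b:os}. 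Finally $|\JJ|\lesssim|b|\log(s)\|\e\|_{H^1}^2 \lesssim s^{-1}\|\e\|_{H^1}^2$ is absorbed, yielding~\eqref{coercvff}.

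For the time derivative~\eqref{monotnoivnon}, I split $\frac{d}{ds}\HH(s,\e(s))$ into the variation from $\dot\e$ and from the explicit $\PP(s)$-dependence. The first equals $\re\la F_\PP(\e), \overline{\dot\e}\ra$ where $F_\PP(\e) := -\Delta\e + \e - (|\PP+\e|^2(\PP+\e) - |\PP|^2\PP)$; substituting $i\dot\e = -F_\PP(\e) + i\lsl\Lambda\e - (1-\dot\gamma)\e - \EE_\PP$ from~\eqref{e:e}, the dominant self-pairing vanishes by antisymmetry, and the $\EE_\PP$ pairing gives $O(s^{-2}\log^{-2}(s)\|\e\|_{H^1})$ via~\eqref{b:EE} and integration by parts against the elliptic part of $F_\PP(\e)$. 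The $(1-\dot\gamma)\e$ term and the explicit $\PP(s)$-variation, via~\eqref{b:ode1} together with the pointwise bound $|\dot\PP|\lesssim|\dot{\vec p}|\sum_k Q^{1/2}(y-z_k)$, are within the same budget. The delicate term is $\lsl\la F_\PP(\e), \Lambda\e\ra$, whose integration by parts generates an unsigned quantity of size $b\|\e\|_{H^1}^2\sim s^{-1}\log^{-1}(s)\|\e\|_{H^1}^2$.

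This scaling term is exactly what the localized momentum $\JJ$ is designed to cancel. Computing $\dot\JJ$ using the equation for $\e$ and integrating by parts, the leading contribution is $b\sum_k\int(|\nabla\e|^2 - P_k^2|\e|^2)\chi_k$, which---via $\lsl=-b+O(s^{-2}\log^{-2}(s))$ from the first line of~\eqref{b:ode1} and the localization of $\chi_k$ near $z_k$ where $\PP\approx P_k$---exactly cancels the scaling term in $\dot\HH$. Remaining contributions from $\dot b$, $\dot z_k$ and $\dot\chi_k$ are bounded by $s^{-2}\log^{-2}(s)\|\e\|_{H^1} + s^{-1}\log^{-1}(s)\|\e\|_{H^1}^2$ via~\eqref{b:ode1} and the cutoff scale $\log(s)$. \textbf{The main obstacle} is the joint handling of multi-bubble cross terms in $F_\PP(\e)$ (since $|\PP|^2\neq\sum_k|P_k|^2$) and the correct choice of cutoff width: $\chi_k$ is supported on $|y-z_k|\leq\log(s)/8$, smaller than the inter-bubble distance $\geq \kappa z$, so cross terms $P_j\chi_k$ with $j\neq k$ decay as $e^{-\kappa z/2}$, below threshold. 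Assembling all these estimates yields~\eqref{monotnoivnon}.
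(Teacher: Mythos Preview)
Your coercivity argument is fine and matches the paper's.

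The time-derivative argument has a genuine gap: you have misidentified which term is dangerous and therefore what $\JJ$ is for. The scaling contribution $\lsl\,\la D_\e\HH,\Lambda\e\ra$ is, as you yourself compute, of size $|b|\,\|\e\|_{H^1}^2\sim s^{-1}\log^{-1}(s)\|\e\|_{H^1}^2$, which is \emph{already within} the budget of~\eqref{monotnoivnon}; no cancellation is needed (the paper notes this explicitly). The term you dismiss as harmless is the one that is not: in the explicit $s$-dependence $D_s\HH$, the translation piece $\sum_k\la\dot z_k\cdot\nabla P_k,\,2|\e|^2P_k+\e^2\overline{P_k}\ra$ has $|\dot z_k|\sim|b|z\sim s^{-1}$ by~\eqref{b:ode1}, giving a contribution of order $s^{-1}\|\e\|_{H^1}^2$, which exceeds the allowed $s^{-1}\log^{-1}(s)\|\e\|_{H^1}^2$ by a full factor of $\log(s)$. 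Your bound ``$|\dot\PP|\lesssim|\dot{\vec p}|\sum_kQ^{1/2}$'' does not save this, since $|\dot z|\sim s^{-1}$.

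Correspondingly, your computation of $\dot\JJ$ is wrong for the functional actually defined. The direction $z_k$ in $J_k=b\,\im\int(z_k\cdot\nabla\e)\bar\e\,\chi_k$ is a \emph{fixed} vector, so this is a (localized) momentum, not a virial. Pairing the Laplacian with $z_k\cdot(2\chi_k\nabla\e+\e\nabla\chi_k)$ produces only terms carrying $\nabla\chi_k$ or $\nabla\Delta\chi_k$, hence of size $s^{-1}\log^{-1}(s)\|\e\|_{H^1}^2$; there is no bulk $b\int|\nabla\e|^2\chi_k$. The leading term in $\dot J_k$ instead comes from the linear-in-$\e$ potential $2\e|P_k|^2+\bar\e P_k^2$ paired with $(z_k\cdot\nabla\e)\chi_k$: one integration by parts moves the derivative onto $P_k$ and yields $-b\,\la z_k\cdot\nabla P_k,\,2|\e|^2P_k+\e^2\overline{P_k}\ra$ up to admissible errors. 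Since $|\dot z_k-bz_k|\lesssim s^{-1}\log^{-1}(s)$, this cancels the dangerous translation term in $D_s\HH$, and~\eqref{monotnoivnon} follows.
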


\begin{proof}[Proof of Proposition~\ref{p:coe}]
\noindent{\bf step 1} Coercivity. The proof of the coercivity~\eqref{coercvff} is   a standard consequence of the coercivity property~\eqref{e:co} around one solitary wave with the orthogonality properties~\eqref{ortho},~\eqref{b:os}, and an elementary localization argument. Hence we briefly sketch the argument. First, using the coercivity property~\eqref{e:co} and the orthogonality properties~\eqref{ortho},~\eqref{b:os} and localization arguments,
we have
\be\label{c:H}
\HH(s,\e) \gtrsim \|\e\|_{H^{1}}^{2} + O(s^{-4}\log^{-4}(s)).
\ee
Note that the error term $O(s^{-4}\log^{-4}(s))$ is due to the fact that the bound~\eqref{b:os} replaces 
a true orthogonality $\la \eta_1(s), Q\ra=0$. We refer to the proof of Lemma~4.1 in Appendix B of \cite{MMT2} for a similar proof. Second, we note that by~\eqref{BS}--\eqref{BS2}, $|\JJ(s,\e)|\lesssim |b| z \|\e\|_{H^1}^2 \lesssim s^{-1} \|\e\|_{H^1}^2 $, and~\eqref{coercvff} follows.

\smallskip

\noindent{\bf step 2} Variation of the energy. We   estimate the time variation of the functional  $\HH$  and claim: for all $s\in [s^{*},s^{in}]$,
\begin{multline}\label{estderh}
 \left| \frac{d}{ds}[\HH(s,\e(s))] - \sum_k \la \dot z_k \cdot \nabla P_k,  2 |\e|^2P_k + \e^2 \overline P_k\ra \right| 
\\
 \lesssim 
 s^{-2} \log^{-2}(s) \|\e(s)\|_{H^1} + s^{-1} \log^{-1}(s) \|\e(s)\|_{H^1}^2.
\end{multline}
The time derivative of $s\mapsto H(s,\e(s))$ splits into two parts
\[
\frac{d}{ds}[\HH(s,\e(s))]  = D_s \HH(s,\e(s)) + \la D_\e \HH(s,\e(s)),\dot \e_s\ra,
\]
where $D_s$ denotes differentiation of $\HH$ with respect to $s$ 
and $D_\e$ denotes differentiation of $\HH$ with respect to $\e$. First compute:
\[
D_s \HH = - \la \dot \PP,|\PP+\e|^2(\PP+\e)-|\PP^2|\PP - (2\e |\PP|^2 + \overline \e \PP^2) \ra.
\]
Observe that by the definition of $P_k$ in~\eqref{d:Pk},
\[
\dot P_k = - \dot z_k \cdot \nabla P_k + i (\dot \beta_k\cdot  (y-z_k) - \tfrac{\dot b}4 |y-z_k|^2  ) P_k + \dot z a'(z) \rho_k
\quad \hbox{where}\quad \rho_k=\big[e^{i\Gamma_k}\rho\big](y-z_k).\]
By~\eqref{b:ode1},~\eqref{BS}--\eqref{BS2} and~\eqref{d:a},
\[
|\dot \beta_k|+| {\dot b}| +| \dot z a'(z)|\lesssim s^{-2} \log^{-2}(s).
\]
Since
\[
\int \left| |\PP+\e|^2(\PP+\e)-|\PP^2|\PP - (2\e |\PP|^2 + \overline \e \PP^2)\right| \lesssim \|\e\|_{H^1}^2,
\]
we obtain
\begin{align*}
&\left| \left\la  i(\dot \beta_k\cdot  (y-z_k) - \tfrac{\dot b}4 |y-z_k|^2  ) P_k + \dot z a'(z) \rho_k,
|\PP+\e|^2(\PP+\e)-|\PP^2|\PP - (2\e |\PP|^2 + \overline \e \PP^2)\right\ra\right| \\ &\lesssim s^{-2} \log^{-2}(s) \|\e\|_{H^1}^2.
\end{align*}
Next, note that 
\[
| \PP+\e|^2(\PP+\e)-|\PP^2|\PP - (2\e |\PP|^2 + \overline \e \PP^2) = 2 |\e|^2 \PP +   \e^2 \overline \PP + |\e|^2\e .
\]
By ~\eqref{b:ode1} and~\eqref{BS}--\eqref{BS2}, $|\dot z|\lesssim s^{-1}$ and thus by~\eqref{BS}--\eqref{BS2},
\[
\left|  \la \dot z_k \cdot \nabla P_k ,  |\e|^3 \ra   \right| 
\lesssim s^{-1}\|\e\|_{H^1}^3 \lesssim s^{-2} \log^{-\frac 32}(s) \|\e\|_{H^1}^2.
\]
For $j\neq k$, since $e^{-\kappa z} \lesssim s^{-2}$ by~\eqref{BS}--\eqref{BS2} and the decay properties of
$P_k$, $P_j$,
\[
\left|\la \dot z_k \cdot \nabla P_k,2 |\e|^2 P_j +   \e^2 \overline P_j \ra \right| \lesssim 
|s|^{-3} \|\e\|_{H^1}^2.
\]
Gathering these computations, we have obtained
\begin{equation}\label{v1}
D_s \HH(s,\e) = 
\sum_k \la \dot z_k \cdot \nabla P_k,  2 |\e|^2P_k + \e^2 \overline P_k\ra + O(s^{-2} \|\e\|_{H^1}^2).
\end{equation}
Second,
\[
D_\e \HH(s,\e) = - \Delta \e + \e - \left( |\PP+\e|^2(\PP+\e) - |\PP|^2 \PP\right),
\]
so that the equation~\eqref{e:e} of $\e$ rewrites
\[
i\dot \e  -D_\e \HH(s,\e) -i\lsl \Lambda \e + (1-\dot \gamma)\e +\EE_{\PP}=0.
\]
In particular,
\begin{align*}
 \la D_\e \HH(s,\e),\dot \e\rangle
 &=  \la i D_\e \HH(s,\e),i\dot \e\ra
 \\ & =  \lsl \la D_\e \HH(s,\e), \Lambda \e \ra 
 - (1-\dot\gamma) \la i D_\e \HH(s,\e), \e \ra - \la i D_\e \HH(s,\e),\EE_{\PP}\ra.
\end{align*}
We recall that 
\[
\la -\Delta \e,\Lambda \e \ra = \|\nabla \e\|^2,\quad \la \e,\Lambda \e \ra = 0,\quad 
\la |\e|^2 \e,\Lambda \e\ra = \frac 12 \int |\e|^4,
\]
and thus, using also~\eqref{BS}--\eqref{BS2},~\eqref{GN}, and the decay properties of $Q$,
\[
\left| \la D_\e \HH(s,\e), \Lambda \e \ra\right| \lesssim \|\e\|_{H^1}^2 + \|\e\|_{H^1}^4\lesssim \|\e\|_{H^1}^2.
\]
In particular, from~\eqref{b:m} and~\eqref{BS}--\eqref{BS2}, we deduce
\[
\left|\lsl \la D_\e \HH(s,\e), \Lambda \e \ra  \right|  \lesssim s^{-1} \log^{-1}(s) \|\e\|_{H^1}^2.
\]
Note that the estimate on $b$ in~\eqref{BS}--\eqref{BS2} implies $|b|\lesssim s^{-1}\log^{-1}(s)\ll s^{-1}$ which avoids the use of virial localized identities (as in \cite{RaSz11, KLR}) to control   the above term.
By~\eqref{b:m} and~\eqref{BS}--\eqref{BS2}, we estimate
\[
| (1-\dot\gamma) \la i D_\e \HH(s,\e), \e \ra | \lesssim s^{-2}  \|\e\|_{H^1}^2.
\]
Finally, integrating by parts, using~\eqref{b:EE} and~\eqref{BS}--\eqref{BS2}, we have
\[
| \la i D_\e \HH(s,\e),\EE_{\PP}\ra|
\lesssim \la |\nabla \e|,|\nabla \EE_{\PP}|\ra+\la |\e|+|\e|^3,|\EE_{\PP}|\ra
\lesssim
s^{-2}\log^{-2}(s) \|\e\|_{H^1}.
\]
The collection of above estimates finishes the proof of~\eqref{estderh}.

\smallskip

\noindent{\bf step 3} Variation of the localized momentum. We now claim: for all $s\in [s^{*},s^{in}]$,
\begin{multline}\label{estmomentum}
\left| \frac{d}{ds}[\JJ(s,\e(s))] - b \sum_k \la   z_k \cdot \nabla P_k,  2 |\e|^2P_k + \e^2 \overline P_k\ra \right| 
\\
\lesssim  
 s^{-2} \log^{-2}(s) \|\e(s)\|_{H^1} + s^{-1} \log^{-1}(s) \|\e(s)\|_{H^1}^2.
\end{multline}
Indeed, we compute, for any $k$,
\begin{align*}
\frac{d}{ds}[J_k(s,\e(s))] 
& = \dot b  \im \int   ( z_k\cdot   \nabla \e) \bar \e \chi_k + b  \im \int   ( \dot z_k\cdot   \nabla \e) \bar \e \chi_k
+ b \im \int   ( z_k\cdot   \nabla \e) \bar \e \dot \chi_k \\
& + b \la i\dot \e, z_k\cdot (2 \chi_k \nabla \e + \e \nabla \chi_k)\ra.
\end{align*}
By~\eqref{b:m} and~\eqref{BS}--\eqref{BS2}, we have
\[
\left|  \dot b  \im \int   ( z_k\cdot   \nabla \e) \bar \e \chi_k \right|
+\left| b  \im \int   ( \dot z_k\cdot   \nabla \e) \bar \e \chi_k \right|
\lesssim s^{-2} \|\e\|_{H^1}^2.
\]
Note that by direct computations,~\eqref{b:m} and~\eqref{BS}--\eqref{BS2},
\begin{align*}
|\dot \chi_k | &\lesssim (s^{-1}\log^{-1}(s)|y-z_k| + |\dot z_k|) \log^{-1}(s)|\chi'(\log^{-1}(s)(y-z_k(s)))|
 \lesssim s^{-1} \log^{-1}(s)
\end{align*}
and so, by~\eqref{BS}--\eqref{BS2},
\[
\left|b \im \int   ( z_k\cdot   \nabla \e) \bar \e \dot \chi_k  \right| \lesssim 
s^{-2}\log^{-2}(s) \|\e\|_{H^{1}}^{2}.
\]
Now, we use the equation~\eqref{e:e} of $\e$ to estimate $b \la i\dot \e, z_k\cdot (2 \chi_k \nabla \e + \e \nabla \chi_k)\ra$.
By integration by parts, we check the following
\[
\la \Delta \e, 2 (z_k\cdot \nabla \e) \chi_k\ra = 
\int |\nabla \e|^{2} (z_k\cdot \nabla \chi_k) - 2 \la (\nabla \e \cdot \nabla \chi_k),(z_k\cdot \nabla \e)\ra,
\]
\[
\la \Delta \e,\e(z_k\cdot \nabla \chi_k)\rangle= 
-\int |\nabla \e|^{2} (z_k\cdot \nabla \chi_k) +\frac 12 \int |\e|^{2} (z_k\cdot \nabla (\Delta \chi_k)).
\]
Thus,
\[
\la \Delta \e, z_k\cdot (2 \chi_k \nabla \e + \e \nabla \chi_k)\ra = 
- 2 \la (\nabla \e \cdot \nabla \chi_k),(z_k\cdot \nabla \e)\rangle
+\frac 12 \int |\e|^{2} (z_k\cdot \nabla (\Delta \chi_k)).
\]
By~\eqref{BS}--\eqref{BS2}, $|b|\lesssim  s^{-1} \log^{-1}(s)$ and $|z_k|\lesssim \log(s)$. Moreover, $|\nabla \chi_k|\lesssim \log^{-1}(s)$. Therefore,
\[
\left| b \la (\nabla \e \cdot \nabla \chi_k),(z_k\cdot \nabla \e)\rangle\right|
\lesssim s^{-1}\log^{-1}(s)\|\e\|_{H^1}^{2}.
\]
Similarly, by $|\nabla (\Delta \chi_k)|\lesssim \log^{-3}(s)$, we obtain
\[
\left| b \int |\e|^{2} (z_k\cdot \nabla (\Delta \chi_k))\right|
\lesssim s^{-1}\log^{-3}(s)\|\e\|_{H^1}^{2}.
\]
In conclusion for term $\Delta \e$ in the equation of $\e$, we obtain
\[
|b \la \Delta \e, z_k\cdot (2 \chi_k \nabla \e + \e \nabla \chi_k)\ra| 
\lesssim s^{-1}\log^{-1}(s)\|\e\|_{H^1}^{2}.
\]
For the mass term in the equation of $\e$, we simply check by integration by parts that
\[
\la   \e, z_k\cdot (2 \chi_k \nabla \e + \e \nabla \chi_k)\ra =0.
\]
We also check that
\[
\la  i\Lambda \e, z_k\cdot (2 \chi_k \nabla \e + \e \nabla \chi_k)\ra =
2 \la i \e,(z_k\cdot \nabla \e)\chi_k\ra + \la i (y\cdot \nabla \e), \e (z_k\cdot \nabla \chi_k)\ra,
\]
and thus, by~\eqref{BS}--\eqref{BS2},
\begin{align*}
|b| \left| \lsl\right| \left|\la  i\Lambda \e, z_k\cdot (2 \chi_k \nabla \e + \e \nabla \chi_k)\ra \right| \lesssim 
s^{-2} \log^{-1}(s)  \|\e\|_{H^1}^2 .
\end{align*}
Next, from~\eqref{b:EE}, 
\[
| b   \la\EE_\PP, z_k\cdot (2 \chi_k \nabla \e + \e \nabla \chi_k) \ra |\lesssim s^{-3} \log^{-2}(s) \|\e\|_{H^1}.
\]
Now, we only have to deal with the term
\[
b \la   |\PP+\e|^2(\PP+\e) - |\PP|^2 \PP ,  z_k\cdot (2 \chi_k \nabla \e + \e \nabla \chi_k)  \ra .
\]
Recall that
$
| \PP+\e|^2(\PP+\e)-|\PP^2|\PP = (2\e |\PP|^2 + \overline \e \PP^2) + 2 |\e|^2 \PP +   \e^2 \overline \PP + |\e|^2 \e.
$
First, by~\eqref{BS}--\eqref{BS2}, it is clear that
\[
\left| b \la  2 |\e|^2 \PP +   \e^2 \overline \PP + |\e|^2\e   ,  z_k\cdot (2 \chi_k \nabla \e + \e \nabla \chi_k)  \ra \right|
\lesssim s^{-1} \|\e\|_{H^1}^3 \lesssim s^{-2}\log^{-\frac 32}(s) \|\e\|_{H^1}^2.
\]
Second, since $|b|\lesssim  s^{-1} \log^{-1}(s)$, $|z_k|\lesssim \log(s)$ and  $|\nabla \chi_k|\lesssim \log^{-1}(s)$, 
\[
\left| b \la 2\e |\PP|^2 + \overline \e \PP^2,\e (z_k\cdot \nabla \chi_k)  \ra \right|
\lesssim s^{-1} \log^{-1}(s) \|\e\|_{H^1}^2.
\]
Third, by the decay property of $Q$ and the definition of $\chi_k$,
\[
\Big|b\Big\la  2\e \Big(|\PP|^2-\sum_j |P_j|^2 \Big)  + \overline \e  \Big(\PP^2- \sum_j P_j^2 \Big),
 (z_k\cdot  \nabla \e) \chi_k\Big\ra \Big|
\lesssim s^{-2} \|\e\|_{H^1}^2,
\]
and, for $j\neq k$,
\[
\Big|b\Big\la  2\e |P_j|^2   + \overline \e   P_j^2 ,
 (z_k\cdot  \nabla \e) \chi_k\Big\ra \Big|
\lesssim s^{-2} \|\e\|_{H^1}^2.
\]
Finally, we compute by integration by parts,
\begin{align*}
 \la 2 \e |P_k|^2 + \overline \e P_k^2 , (z_k\cdot \nabla \e) \chi_k\ra
& =  - \la  z_k \cdot \nabla P_k,  2 |\e|^2P_k + \e^2 \overline P_k\ra\\
& -\frac 12 \re\left( \int \left(2|\e|^2 P_k^2 + \e^2 |P_k|^2 \right) (z_k\cdot \nabla \chi_k)\right).
\end{align*}
As before,
\[
\left| b
\re\left( \int \left(2|\e|^2 P_k^2 + \e^2 |P_k|^2 \right) (z_k\cdot \nabla \chi_k)\right)
\right| \lesssim s^{-1} \log^{-1}(s) \|\e\|_{H^1}^2.
\]
The collection of above bounds concludes the proof of~\eqref{estmomentum}.

\smallskip

\noindent{\bf step 3} Conclusion. Recall that, by~\eqref{b:ode1},
$
|\dot z_k - b z_k|\lesssim s^{-1}\log^{-1}(s)$, and so
\[
\left| \la   (\dot z_k - b z_k ) \cdot \nabla P_k,  2 |\e|^2P_k + \e^2 \overline P_k\ra \right|
\lesssim s^{-1}\log^{-1}(s) \|\e\|_{H^1}^2,
\]
and~\eqref{monotnoivnon} now follows from~\eqref{estderh},~\eqref{estmomentum}. This concludes the proof of Proposition~\ref{p:coe}.
\end{proof}


\subsection{End of the proof of Proposition~\ref{p:st}}\label{:s:4.4}

  We   close the bootstrap estimates~\eqref{BS} and   prove~\eqref{b:su}.

\smallskip

\noindent{\bf step 1} Closing the estimates in $\e$. By~\eqref{monotnoivnon} in Proposition~\ref{p:coe} and then~\eqref{BS}--\eqref{BS2}, we have
\[
\left|\frac{d}{ds}[ \FF(s,\e(s))]\right| \lesssim s^{-2}\log^{-2}(s) \|\e\|_{H^1} +s^{-1} \log^{-1}(s) \|\e\|_{H^1}^2 \lesssim s^{-3} \log^{-\frac 72}(s).
\]
Thus, by integration on $[s,s^{in}]$ for any $s\in [s^*,s^{in}]$, using $\e(s^{in})=0$ (see~\eqref{c:in2}), we obtain
\[
|\FF(s,\e(s))| \lesssim  s^{-2} \log^{-\frac 72}(s).
\]
By~\eqref{coercvff} in Proposition~\ref{p:coe}, we obtain
\[
\|\e(s)\|_{H^1}^2 \lesssim  s^{-2} \log^{-\frac 72}(s) .
\]
Therefore, for $s_0$ large enough, for all $s\in [s^*,s^{in}]$,
\[
\|\e(s)\|_{H^1}^2 \leq \frac 12 s^{-2} \log^{-3} (s) ,
\]
which strictly improves the estimate on $\|\e\|_{H^1}^2$ in~\eqref{BS}.

\smallskip

\noindent{\bf step 2} Closing the parameter estimates. First, note that from~\eqref{b:ode1}, 
$
|\dot \beta|\lesssim s^{-2} \log^{-2} (s).
$
Together with the choice $\beta(s^{in})= \beta^{in}= 0$ (see~\eqref{c:in}), direct integration in time gives, for all $s\in [s^*,s^{in}]$,
$
|\beta(s)|\lesssim s^{-1} \log^{-2}(s).
$
For $s_0$ large enough, we obtain, for all $s\in [s^*,s^{in}]$,
\[
|\beta(s)|< \frac 12 s^{-1} \log^{-\frac 32}(s), \]
which strictly improves the estimate on $\beta(s)$ in~\eqref{BS}.

\smallskip

Second, recall from~\eqref{b:ode1},~\eqref{BS2} and the definition of $a(z)$ in~\eqref{d:a},
for all $s\in [s^*,s^{in}]$,
\[
\left| \dot b + c_a z^{\frac 12} e^{-\kappa z} \right| \lesssim s^{-2} \log^{-2}(s),\quad
\left| \dot z z^{-1} - b \right| \lesssim s^{-1} \log^{-2}(s).
\]
Since $|\dot b|\lesssim s^{-2} \log^{-1}(s)$ and $| \dot z z^{-1}|\lesssim s^{-1}\log^{-1}(s)$, it follows that
\[
\left| \dot b b + c_a \dot z z^{-\frac 12} e^{-\kappa z} \right| \lesssim s^{-3} \log^{-3}(s).
\]
Integrating on $[s,s^{in}]$ for any $s\in [s^*,s^{in})$, using the special relation between $b^{in}$ and $z^{in}$ fixed in~\eqref{c:in}
\[
b^2(s^{in}) = \frac {2 c_a}\kappa z^{-\frac 12}(s^{in}) e^{-\kappa z(s^{in})},\quad b(s^{in})>0,
\]
we obtain
\be\label{sam}
\left| b^2  - \frac {2 c_a}\kappa z^{-\frac 12}  e^{-\kappa z}\right|
\lesssim s^{-2} \log^{-3}(s) + \int_s^{s^{in}} \left| \dot z z^{-\frac 32} e^{-\kappa z}\right| ds'
\lesssim s^{-2} \log^{-3}(s),\quad b(s)>0.
\ee
From~\eqref{BS}--\eqref{BS2} and~\eqref{BS2}, we have
\[
\left|  \frac{2c_a}{\kappa} z^{-\frac 12} e^{- {\kappa} z} -  s^{-2}\log^{-2}(s)\right| 
\lesssim s^{-2}\log^{-\frac 52} s.
\]
Therefore, the following estimate on $b(s)$  follows from~\eqref{sam} 
\[
\left| b^2  -   s^{-2}\log^{-2}(s)  \right|
\lesssim s^{-2} \log^{-\frac 52}(s) . 
\]
This implies, for all $s\in [s^*,s^{in}]$,
\be\label{sam3}
\left| b  -   s^{-1}\log^{-1}(s)  \right|
\lesssim s^{-1} \log^{-\frac 32}(s) ,
\ee
which   strictly improves the estimate on $b(s)$ in~\eqref{BS}.

\smallskip

Finally, we address the estimate on $z(s)$.
From~\eqref{sam},~\eqref{BS}--\eqref{BS2} and~\eqref{b:ode1}, we have
\be\label{sam2}
\left| b  - \left(\frac{2 c_a}{\kappa}\right)^{\frac 12} z^{-\frac 14} e^{-\frac \kappa 2 z}\right|
+ \left| \dot z z^{-1}  - \left(\frac{2 c_a}{\kappa}\right)^{\frac 12} z^{-\frac 14} e^{-\frac \kappa 2 z}\right|
\lesssim s^{-1} \log^{-2}(s).
\ee
Using $z\lesssim \log^{-1}(s)$, we obtain
\be\label{dd}
\left| \frac{d}{ds}\left( z^{-\frac 34} e^{\frac \kappa 2 z}\right)  - \left(\frac{\kappa c_a}{2}\right)^{\frac 12} \right|
\lesssim \log^{-1}(s)  + \left|\dot z z^{-\frac 74} e^{\frac \kappa 2 z}\right|
\lesssim \log^{-1}(s).
\ee
We need to adjust the initial choice of $z(s^{in})=z^{in}$ through a topological argument (see \cite{CMM} for a similar argument).
We define $\zeta $ and $\xi$ the following two functions on $[s^*,s^{in}]$
\be\label{d:zx}
\zeta(s) = \left(\frac2{\kappa c_a}\right)^{\frac 12} z^{-\frac 34} e^{\frac \kappa 2 z},\quad
\xi(s) = (\zeta(s)-s)^2 s^{-2} \log(s).
\ee
Then,~\eqref{dd} writes
\be\label{ddd}
| \dot \zeta(s) - 1|\lesssim \log^{-1}(s).
\ee
According to~\eqref{BS}, our objective is to prove that there exists a suitable choice of \[\zeta(s^{in})=\zeta^{in}\in [s^{in}-s^{in}\log^{-\frac{1}{2}}(s^{in}),s^{in}+s^{in}\log^{-\frac{1}{2}}(s^{in})],\]
so that $s^*=s_0$.
Assume for the sake of contradiction that for all $\zeta^{\sharp}\in [-1,1]$, the choice
\[
\zeta^{in} = s^{in} + \zeta^{\sharp} s^{in} \log^{-\frac{1}{2}}(s^{in})
\]
 leads to $s^*=s^*(\zeta^{\sharp})\in (s_0,s^{in})$.
Since all estimates in~\eqref{BS} except the one on $z(s)$ have been strictly improved on $[s^*,s^{in}]$, it follows from  $s^*(\zeta^{\sharp})\in (s_0,s^{in})$ and continuity that
\[
|\zeta(s^*(\zeta^{\sharp})) - s^*| = s^{*} \log^{-\frac 12} s^*\quad \hbox{i.e.} \quad
\zeta(s^*(\zeta^{\sharp})) = s^* \pm s^{*} \log^{-\frac 12} s^*.
\]
We need a transversality condition to reach a contradiction.
We compute:
\[
\dot \xi(s) = 2 (\zeta(s)-s)(\dot \zeta(s) - 1) s^{-2}\log(s)
- (\zeta(s)-s)^{2} (2 s^{-3} \log(s) -s^{-3}).
\]
At $s=s^{*}$, this gives
\[
| \dot \xi(s^{*}) + 2 (s^*)^{-1} |\lesssim (s^*)^{-1} \log^{-\frac 12}(s^*).
\]
Thus, for $s_0$ large enough,
\be\label{ng}
\dot \xi(s^{*}) < - (s^*)^{-1}.
\ee
Define the function $\Phi$ by 
\[
\Phi \, : \, \zeta^{\sharp}\in [-1,1] \mapsto 
(\zeta(s^{*})-s^{*})(s^{*})^{-1}\log^{\frac 12}(s^{*})\in \{-1,1\}.
\]
A standard consequence of the transversality property~\eqref{ng} is the continuity of the function $\zeta^{\sharp}\in[-1,1]\mapsto s^*(\zeta^{\sharp})$. In particular,   the function $\Phi$ is also continuous from $[-1,1]$ to $\{1,-1\}$. Moreover, for $\zeta^{\sharp}=-1$ and $\zeta^{\sharp}=1$, $\xi(s^{*})=1$ and $\dot \xi(s^{*})<0$ by~\eqref{ng} and so in these cases $s^{*} =s^{in}$. Thus, $\Phi(-1)=-1$ and $\Phi(1)=1$, but this is in contradiction with the continuity.

\smallskip

In conclusion, there exists at least a choice of 
 \[\zeta(s^{in})=\zeta^{in}\in (s^{in}-s^{in}\log^{-\frac{1}{2}}(s^{in}),s^{in}+s^{in}\log^{-\frac{1}{2}}(s^{in}))\]
such that $s^*=s_0$.

\smallskip

\noindent{\bf step 3} Conclusion. 
To finish proving~\eqref{b:su}, we only have to prove the estimate on $\lambda(s)$.
From~\eqref{b:m} and~\eqref{sam3}, we obtain
\[
\left|\lsl +    s^{-1}\log^{-1}(s) \right| \lesssim s^{-1} \log^{-\frac 32}(s).
\]
By integration on $[s,s^{in}]$, for any $s\in [s_0,s^{in})$, using the value $\lambda(s^{in})=\lambda^{in}=\log^{-1}(s^{in})$ (see~\eqref{c:in}),
we have
\[
\left| \log(\lambda(s)) +   \log(\log(s)) \right| \lesssim \log^{-\frac 12} (s),
\]
and thus
\be\label{sam4}
\left| \lambda(s) - \log^{-1}(s)\right| \lesssim \log^{-\frac 32} (s).
\ee
This concludes the proof of Proposition~\ref{p:st}.


\section{Compactness arguments}\label{s:4}


The objective of this section is to finish the construction of  Theorem~\ref{thmmain}
by passing to the limit on a sequence of solutions given by  Proposition~\ref{p:st}. 

\subsection{Construction of a sequence of backwards solutions}
We claim the following consequence of  Proposition~\ref{p:st}.

\begin{lemma}\label{le:csq}
There exist $t_0>1$  and
a sequence of solutions $u_n\in \mathcal C([t_0-T_n,0],\Sigma)$ of~\eqref{nls},
 where
\begin{equation}\label{Tn}
T_n \to +\infty \quad \hbox{as $n\to +\infty$},
\end{equation}
satisfying the following estimates, for all $t\in [t_0-T_n,0]$,
\be\label{b:tu}
\begin{aligned} 
&\left|z_n(t)- \frac{2}{\kappa}\log(t+T_n)\right|\lesssim \log(\log(t+T_n)),\quad
 \left| \lambda_n(t) - \log^{-1}(t+T_n)\right| \lesssim \log^{-\frac 32} (t+T_n),\\
& \left|b_n(t) -   (t+T_n)^{-1} \log^{-3}(t+T_n)\right| +|\beta_n(t)|+\|\e_n(t)\|_{H^1}\lesssim (t+T_n)^{-1} \log^{-\frac 72}(t+T_n), \\
& \left|a_n(t)\right|\lesssim (t+T_n)^{-2} \log^{-1}(t+T_n),\quad \e_n(0)\equiv 0,
\end{aligned}
\ee
where $(\lambda_n,z_n,\gamma_n,\beta_n,b_n)$ are  the parameters  of the decomposition of $u_n$ given by Lemma~\ref{le:mod}, i.e.
\begin{equation}\label{modn}
u_n(t,x) = \frac{e^{i\gamma_n(t)}}{\lambda_n(t)}\left(
\sum_k \big[e^{i\Gamma_{k,n}} Q_{a_n} \big]\left( \frac{x}{\lambda_n(t)}-z_n(t)\ek\right) 
+ \e_n\left(t, \frac{x}{\lambda_n(t)} \right)\right),
\end{equation}
with $\Gamma_{k,n}(t,y) = \beta_n(t) (\ek\cdot y) - \frac{b_n(t)}4 |y|^2$ and $Q_{a_n}=Q+a_n \rho$.
Moreover,  for all $t\in [t_0-T_n,0]$, 
\be\label{Sig}
\int |u_n(t,x)|^2 |x|^2 dx \lesssim 1.
\ee
\end{lemma}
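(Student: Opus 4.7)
The plan is to apply Proposition~\ref{p:st} along a sequence $s_n^{in}\to+\infty$, reparametrize from rescaled time to physical time, and upgrade the control to $\Sigma$ via the virial identity. Fix $t_0:=2$ (any constant $>1$ works). For each sufficiently large $n$, Proposition~\ref{p:st} produces a choice of $(\lambda_n^{in}, z_n^{in}, b_n^{in})$ satisfying \eqref{c:in} and a solution $u^{(n)}$ of \eqref{nls} with initial data \eqref{init} whose decomposition from Lemma~\ref{le:mod} satisfies \eqref{b:su} on the rescaled interval $[s_0, s_n^{in}]$. The corresponding physical time interval is $[\tau_n^{mod}, 0]$ with $\tau_n^{mod} = -\int_{s_0}^{s_n^{in}} \lambda_n^2(s')\,ds'$. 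I set $T_n := t_0-\tau_n^{mod}$ and $u_n := u^{(n)}$, which yields a solution on $[t_0-T_n,0]$. Using $\lambda_n^2(s') = \log^{-2}(s')(1+o(1))$ from \eqref{b:su} and integration by parts,
\[
T_n - t_0 \,=\, \int_{s_0}^{s_n^{in}} \lambda_n^2(s')\,ds' \,=\, \frac{s_n^{in}}{\log^2(s_n^{in})}(1+o(1)),
\]
so $T_n\to +\infty$.

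To derive \eqref{b:tu} from \eqref{b:su}, I apply the same computation to $\tau := t+T_n$ for general $t\in [t_0-T_n,0]$:
\[
\tau \,=\, t_0 + \int_{s_0}^{s(t)} \lambda_n^2(s')\,ds' \,=\, \frac{s(t)}{\log^2(s(t))}(1+o(1)).
\]
At the logarithmic level this gives $\log\tau = \log s(t)(1+o(1))$, and hence $s(t)^{-1}\sim \tau^{-1}\log^{-2}\tau$. Substituting into \eqref{b:su} produces exactly the bounds of \eqref{b:tu}: $\|\e_n(t)\|_{H^1}+|\beta_n(t)|\lesssim s^{-1}\log^{-3/2}(s)\sim \tau^{-1}\log^{-7/2}\tau$; $|b_n(t) - \tau^{-1}\log^{-3}\tau|\lesssim \tau^{-1}\log^{-7/2}\tau$; $|\lambda_n(t)-\log^{-1}\tau|\lesssim \log^{-3/2}\tau$; $|z_n(t)-\frac{2}{\kappa}\log\tau|\lesssim \log\log\tau$; and $|a_n(t)|\lesssim \tau^{-2}\log^{-5}\tau \leq \tau^{-2}\log^{-1}\tau$. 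The identity $\e_n(0)\equiv 0$ follows directly from \eqref{c:in2}.

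The main task is the uniform $\Sigma$ bound \eqref{Sig}. Since $\PP_n^{in}$ has Gaussian-type decay, $u_n(0)\in \Sigma$, and the solution stays in $\Sigma$ throughout its $H^1$-existence interval (which contains $[t_0-T_n,0]$ by Proposition~\ref{p:st}); the virial identity \eqref{globalvirial} then integrates to
\[
\int|x|^2|u_n(t,x)|^2\, dx \,=\, M_n + tV_n + 8E(u_n(0))\,t^2,\quad t\in[t_0-T_n,0],
\]
with $M_n := \int|x|^2|u_n(0)|^2\, dx$ and $V_n := 4\im\int \bar u_n(0)(x\cdot\nabla u_n(0))\,dx$, and I would check uniform control of each of the three terms. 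By scaling $M_n=(\lambda_n^{in})^2\int|y|^2|\PP_n^{in}|^2\, dy$, so expanding $|\PP_n^{in}|^2$ as a sum over well-separated bubbles and using $|z_n^{in}|^2\sim \log^2(s_n^{in})$, $(\lambda_n^{in})^2=\log^{-2}(s_n^{in})$ gives $M_n = K\|Q\|_{L^2}^2(1+o(1))$. Likewise a direct bubble-by-bubble computation using the explicit phase $\Gamma_k^{in}(y-z_k) = -\frac{b_n^{in}}{4}|y-z_k|^2$ shows $V_n = -2Kb_n^{in}\int|y|^2 Q^2\,dy + O(|a_n^{in}|+e^{-\kappa z_n^{in}})$, hence $|tV_n|\leq T_n|V_n|\lesssim b_n^{in} T_n \sim \log^{-3}(s_n^{in})\to 0$. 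The hard part is the energy: writing $E(\PP_n^{in}) = KE(P_{1,n}^{in}) + O(e^{-\kappa z_n^{in}})$ and, using $\beta_n^{in}=0$,
\[
E(P_{1,n}^{in}) = E(Q_{a_n^{in}}) + \frac{(b_n^{in})^2}{8}\int|y|^2 Q_{a_n^{in}}^2\,dy;
\]
the fact that $E(Q)=0$ (Pohozaev) gives $E(Q_{a_n^{in}}) = O(|a_n^{in}|)$, and combined with $(b_n^{in})^2\lesssim |a_n^{in}|/z_n^{in}$ read from \eqref{c:in} this yields the crucial bound $|E(\PP_n^{in})|\lesssim |a_n^{in}| \lesssim (s_n^{in})^{-2}\log^{-1}(s_n^{in})$. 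Therefore
\[
|E(u_n(0))|\,T_n^2 = (\lambda_n^{in})^{-2}|E(\PP_n^{in})|\,T_n^2 \lesssim \frac{\log(s_n^{in})}{(s_n^{in})^{2}}\cdot\frac{(s_n^{in})^{2}}{\log^4(s_n^{in})} = \log^{-3}(s_n^{in}) \to 0,
\]
and combining yields $\int|x|^2|u_n(t,x)|^2\,dx\lesssim 1$ uniformly in $n$ and $t\in[t_0-T_n,0]$, which is \eqref{Sig}. This energy computation is the main new ingredient beyond Proposition~\ref{p:st}; it uses up the full smallness of $(b_n^{in})^2$ and $|a_n^{in}|$ enforced by \eqref{c:in}.
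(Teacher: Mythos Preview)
Your argument is essentially correct, with one cosmetic caveat: the choice $t_0=2$ is too casual. The passage from \eqref{b:su} to \eqref{b:tu} rests on the asymptotic $\tau=t+T_n=s\log^{-2}(s)(1+o(1))$, which is only valid once $s$ (hence $\tau$) exceeds some fixed threshold; the paper makes this explicit by introducing $\bar s_0>s_0$ and setting $t_0=\tfrac32\bar s_0\log^{-2}(\bar s_0)$. You should say ``take $t_0$ large enough'' rather than fix $t_0=2$.

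The interesting point is your treatment of \eqref{Sig}, which differs genuinely from the paper. The paper does \emph{not} use the exact virial identity; instead it introduces a cut--off $\varphi(x)=(|x|-A)^2\mathbf 1_{|x|>A}$, proves the pointwise smallness $\|\nabla u_n(t)\|_{L^2(|x|>A)}\lesssim (t+T_n)^{-1}\log^{-5/2}(t+T_n)$ from the decomposition \eqref{modn}, and integrates the resulting differential inequality for $\int |u_n|^2\varphi$. This is robust: it only uses that the bubbles stay in a fixed ball and that the modulation parameters decay mildly. Your route via \eqref{globalvirial} is cleaner and more conceptual, but it is also sharper: it requires the cancellation $E(Q)=0$ and the precise relation $(b_n^{in})^2\sim |a_n^{in}|/z_n^{in}$ encoded in \eqref{c:in} to get $|E(u_n(0))|T_n^2\to 0$. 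In effect you are exploiting that the approximate solution has almost zero energy, which is exactly the rigidity $E(u)=0$ observed at the end of the paper; the paper's localized argument would survive a small nonzero energy, yours would not.
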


\begin{proof}
Applying Proposition~\ref{p:st} with $s^{in}=n$ for any large $n$,  there exists a solution $u_n(t)$ of~\eqref{nls}
defined on the time interval $[-T_n,0]$ where
\[
T_n = \int_{s_0}^{n} \lambda_n^2(s) ds,
\]
and whose decomposition satisfies the uniform estimates~\eqref{b:su} on $[-T_n,0]$.
First, we see that~\eqref{Tn} follows directly from the estimate on $\lambda_n(s)$ in~\eqref{b:su}.

\smallskip

\noindent\emph{Proof of~\eqref{b:tu}.}
From~\eqref{b:su} and  the definition of the rescaled time $s$ (see~\eqref{d:s}),  for any $s\in [s_0,n]$,
we have
\begin{align*}
t(s)+T_n & = \int_{s_0}^{s} \lambda_n^2(s') ds' \quad\hbox{where}\quad
|\lambda_n^2(s) - \log^{-2}(s) |\lesssim  \log^{-\frac 52}(s).
\end{align*}
Fix $\bar s_0>s_0$   large enough independent of $n$ so that, for all $\bar s_0<s<n$,
\[
 \frac12   s \log^{-2}(s)\leq \int_{s_0}^{s} \lambda_n^2(s') ds'  =   s \log^{-2}(s) + O(s \log^{-\frac 52}(s))
 \leq \frac32   s \log^{-2}(s).
\]
Fix $t_0 =  \frac32   \bar s_0 \log^{-2}(\bar s_0)$. Then, for all $t\in [t_0-T_n,0]$,
\[
t+T_n = s\log^{-2}(s)\left(1+   O ( \log^{-\frac 12}(s) )\right)\geq \frac12   s \log^{-2}(s),
\]
and 
\[s = (t+T_n) \log^{2}(t+T_n) \left(1+O(\log^{-\frac 12}(t+T_n))\right).\]
Thus, estimates~\eqref{b:tu} are direct consequences of  ~\eqref{b:su}.

\smallskip

\noindent\emph{Proof of~\eqref{Sig}.}
 From~\eqref{modn} and   $\e_n(0)\equiv0$,  we have $u_n(0)\in \Sigma$.
It is then standard (see e.g.  \cite{Ca03}, Proposition~6.5.1) that $u_n\in \mathcal C([t_0-T_n,0],\Sigma)$.
We claim the following preliminary estimates. Fix $A=\frac {16} \kappa\geq 8 $.
For any $k\in \{1,\ldots,K\}$, for all $t\in [t_0-T_n,0]$,
\begin{equation}\label{ff1}
\frac 1{\lambda_n^2(t)} \int \left|Q_{a_n} \left(\frac x{\lambda_n(t)} - z_n(t) \ek \right)\right|^2 |x|^2 dx \lesssim 1,
\end{equation}
\begin{equation}\label{ff2}
\frac 1{\lambda_n^2(t)} \int_{|x|>A}\left|\nabla_x \left(\big[e^{i\Gamma_{k,n}} Q_{a_n}\big] \left(\frac x{\lambda_n(t)} - z_n(t) \ek \right)\right)\right|^2  dx \lesssim (t+T_n)^{-4}.
\end{equation}
Indeed,~\eqref{ff1} follows from a change of variable and the decay properties of $Q$ and $\rho$, 
\begin{multline*}
\frac 1{\lambda_n^2(t)} \int \left|Q_{a_n} \left(\frac x{\lambda_n(t)} - z_n(t) \ek \right)\right|^2 |x|^2 dx
= \int \left|Q_{a_n}(y)\right|^2 |\lambda_n(t) y + \lambda_n(t) z_n(t) \ek|^2 dy
  \lesssim 1.
\end{multline*}
where we have used from~\eqref{b:tu},
\begin{equation}\label{zl}
\lambda_n(t) z_n(t) \lesssim 1.
\end{equation}
To show~\eqref{ff2}, we see first that by~\eqref{b:tu},
\[
  \left| \nabla  \big[e^{i\Gamma_{k,n}} Q_{a_n}\big]  (y)  \right|^2 
 \lesssim   \left| \nabla  Q_{a_n} (y) \right|^2
 + \left(|\beta_n|^2  + b_n^2 |y|^2\right)   Q_{a_n}^2 (y) \lesssim e^{-\frac 32|y|}.
\]
Thus, by change of variable (using $A\geq 8$),
\begin{multline*}
\frac 1{\lambda_n^2(t)} \int_{|x|>A}\left|\nabla_x \left(\big[e^{i\Gamma_{k,n}} Q_{a_n}\big] \left(\frac x{\lambda_n(t)} - z_n(t) \ek \right)\right)\right|^2  dx \\ 
= \frac 1{\lambda_n^2(t)} \int_{|y+z_n(t) \ek|> A/\lambda_n(t)} \left| \nabla  \big[e^{i\Gamma_{k,n}} Q_{a_n}\big]  (y)  \right|^2 dy
\\\lesssim \log^2(t+T_n) \int_{|y|>\frac A2 \log(t+T_n)} e^{-\frac 32|y|} dy \lesssim (t+T_n)^{-\frac A2}
=(t+T_n)^{-4},
\end{multline*}
where we have used from~\eqref{b:tu} (possibly taking a larger $t_0$),
\[
|y+z_n(t) \ek|> \frac A{\lambda_n(t)} \quad \Rightarrow \quad
|y| > \frac A{\lambda_n(t)} - |z_n(t)| 
> \left( \frac {3A}4 - \frac 4\kappa\right) \log(t+T_n)\geq\frac A2 \log(t+T_n).
\]
Thus~\eqref{ff2} is proved. Observe that~\eqref{ff1}--\eqref{ff2} and~\eqref{b:tu} imply
\begin{equation}\label{ff3}
\|x u_n(0)\|_{L^2} \lesssim 1,\quad \|\nabla u_n(t)\|_{L^2(|x|>A)} \lesssim (t+T_n)^{-1} \log^{-\frac 52}(t+T_n).
\end{equation}

\smallskip

Define $\varphi:\RR^2 \to [0,1] $ by $\varphi(x) = (|x|-A)^2$ for $|x|>A$ and  $\varphi(x)=0$ otherwise.
By elementary computations, 
\[
\frac{d}{dt} \int |u_n|^2 \varphi  = 2 \im \int (\nabla \varphi  \cdot \nabla u_n) \overline u_n
 = 4 \int_{|x|>A} \left(\frac{x}{|x|}\cdot \nabla u_n\right) \overline u_n \varphi^{\frac 12} .
\]
Thus, by~\eqref{ff3},
\[
\left| \frac{d}{dt} \int |u_n|^2 \varphi\right| 
\lesssim  \left(\int |u_n|^2 \varphi \right)^{\frac 12} \left( \int_{|x|>A} |\nabla u_n(t)|^2\right)^{\frac 12}
\lesssim  (t+T_n)^{-1}\log^{-\frac 52} (t+T_n)\left(\int |u_n|^2 \varphi \right)^{\frac 12} .
\]
By integration and~\eqref{ff3}, the following uniform bound holds on $[t_0-T_n,0]$,
\[
\int |u_n(t,x)|^2 \varphi(x) dx\lesssim 1 \hbox{ and thus } \int |u_n(t,x)|^2 x^2 dx\lesssim 1,
\]
which finishes the proof of~\eqref{Sig}.
\end{proof}

\subsection{Compactness argument}  
By~\eqref{b:tu}--\eqref{Sig},   the sequence $(u_n(t_0-T_n))$ is bounded in $\Sigma$. Therefore, there exists a subsequence of $(u_n)$ (still denoted by $(u_n)$)
and $u_0\in \Sigma$ such that 
\[\begin{aligned}
&u_n(t_0-T_n) \rightharpoonup u_0 \quad \hbox{weakly in $H^1(\RR^2)$,}\\
&u_n(t_0-T_n) \rightarrow u_0 \quad \hbox{ in $H^\sigma(\RR^2)$ for $0\leq \sigma <1$, as $n\to +\infty$.}
\end{aligned}\]
Let $u $ be the solution of~\eqref{nls} corresponding to $u(t_0)=u_0$. By the local Cauchy theory for~\eqref{nls} (see \cite{Ca03} and \cite{CaWe90})
 and the properties of the sequence $u_n(t)$
(recall that $T_n\to \infty$), it follows that $u\in \mathcal C([t_0,+\infty),\Sigma)$.
Moreover, for all $0\leq \sigma <1$, for all $t\in [t_0,+\infty)$,
\[
u_n(t-T_n) \to u(t) \quad \hbox{in $H^\sigma$.}
\]
By weak convergence in $H^1$, $u(t)$ satisfies~\eqref{close} for all $t\geq t_0$. Moreover,  the decomposition $(\vec p,\e)$ of $u$ satisfies, for all $t\geq t_0$,
\be\label{co3}
\vec p_n(t-T_n) \to \vec p(t),\quad \e_n(t-T_n)\to \e(t) \hbox{\ in $H^\sigma$,}
\quad \e_n(t-T_n) \rightharpoonup \e(t) \hbox{\ in $H^1$}
\ee
(see e.g. \cite{MeRa04}, Claim p.598).
In particular, for all  $t\in [t_0,+\infty)$, $u(t)$ decomposes as
\begin{equation}\label{modu}
u(t,x) = \frac{e^{i\gamma(t)}}{\lambda(t)}\left(
\sum_k \big[e^{i\Gamma_{k}} Q_{a} \big]\left( \frac{x- \lambda(t)z(t)\ek }{\lambda(t)}\right) 
+ \e\left(t, \frac{x}{\lambda(t)} \right)\right),
\end{equation}
where $\Gamma_{k}(t,y) = \beta(t) (\ek\cdot y) - \frac{b(t)}4 |y|^2$ and
\be\label{b:tuO}
\begin{aligned} 
&\left|z(t)- \frac{2}{\kappa}\log(t)\right|\lesssim \log(\log(t)),\quad
 \left| \lambda(t) - \log^{-1}(t)\right| \lesssim \log^{-\frac 32} (t),\\
& \left|b(t) -   t^{-1} \log^{-3}(t)\right| + |\beta(t)|+\|\e(t)\|_{H^1} \lesssim t^{-1} \log^{-\frac 72}(t),
\quad |a(t)|\lesssim t^{-2} \log^{-1}(t), \\
& \int |u(t,x)|^2 |x|^2 dx \lesssim 1.
\end{aligned}
\ee 
Note that by~\eqref{b:tuO}, we have for all $k\in \{1,\ldots,K\}$,
\[
 x_k(t) = \lambda(t) z(t) \ek \to \frac 2\kappa \ek,\quad 
 \hbox{with}\quad
 \left| x_k(t)-\frac 2\kappa \ek\right|\lesssim \frac {\log(\log(t))}{\log(t)}.
\]
Since $\lambda^{-1}(t)\|\e(t)\|_{H^1}\lesssim t^{-1}\log^{-\frac 52}(t)$
and, by~\eqref{modu} and~\eqref{b:tuO}, 
\begin{equation}\label{ff6}
\lambda^{-1}(t) \left\| e^{i\Gamma_{k}} Q_{a} 
-Q\right\|_{H^1}\lesssim \lambda^{-1}(t) (|\beta(t)|+|b(t)|+|a(t)|)\lesssim t^{-1} \log^{-2}(t),
\end{equation}
 we obtain the following  stronger form of ~\eqref{strognconv}
\be
\label{conv2}
\left\|u(t)- e^{i\gamma(t)}\sum_k\frac{1}{\lam(t)}Q\left(\frac{.-x_k(t)}{\lam(t)}\right)\right\|_{H^1}\lesssim t^{-1}
\log^{-2}(t).
\ee 
Next, since for $j\neq k$, for some $q$,
\[
\lambda^{-2}(t)\int \left| \nabla Q\left(y- z(t) \ek\right)\cdot\nabla Q\left(y- z(t){\rm e}_j\right)\right|dy
\lesssim |z|^q e^{-  \kappa z} \lesssim t^{-1},
\]
we also obtain~\eqref{thm2}.
As a final remark, note that by global existence and  uniform bound in $\Sigma$,  the virial identity~\eqref{globalvirial} implies  the rigidity $E(u)=0$.
This concludes the proof of Theorem~\ref{thmmain}.

\subsection{Proof of Corollary~\ref{maincor}}
For $-t_0^{-1}<t<0$, we set
\begin{align*}
&\tilde z(t) = z(|t|^{-1}),\quad 
\tilde \lambda(t) = |t| \lambda(|t|^{-1}),\quad 
\tilde a(t) = a(|t|^{-1}),\quad 
\tilde b(t) = b(|t|^{-1}),\quad \\
&\tilde \gamma(t) = \gamma(|t|^{-1}),\quad 
\tilde \beta(t) = \beta(|t|^{-1}),\quad
\tilde \e(t) = \e(|t|^{-1}),\quad \tilde\Gamma_{k}(t,y) = \tilde\beta(t) (\ek\cdot y) - \frac{\tilde b(t)}4 |y|^2,
\end{align*}
so that from~\eqref{b:tuO},
\be\label{b:tuOV}
\begin{aligned} 
&\left|\tilde z(t)- \frac{2}{\kappa}|\log|t||\right|\lesssim \log|\log|t||,\quad
 \left| \tilde \lambda(t) - |t||\log|t||^{-1}\right| \lesssim |\log|t||^{-\frac 32} ,\\
& \left|\tilde b(t) -   |t| |\log|t||^{-3}\right| + |\tilde\beta(t)|
+\|\tilde \e(t)\|_{H^1} \lesssim |t| |\log|t||^{-\frac 72},
\quad |\tilde a(t)|\lesssim |t| |\log|t||^{-1}. \\
\end{aligned}
\ee 
We see from~\eqref{modu} that the pseudo-conformal transform $v(t)$ of $u(t)$ as defined
in~\eqref{pseduocno} satisfies
\[
v(t,x) = e^{-i\frac {|x|^2}{4|t|}}w(t,x),\quad
w(t,x)=\frac{e^{i\tilde\gamma(t)}}{\tilde\lambda(t)}\left(
\sum_k \big[e^{i\tilde\Gamma_{k}} Q_{\tilde a} \big]\left( \frac{x }{\tilde\lambda(t)}-  \tilde z(t)\ek\right) 
+ \tilde\e\left(t, \frac{x}{\tilde\lambda(t)} \right)\right).
\]
Note in particular that $\tilde \lambda(t) \tilde z(t) \sim \frac{2}{\kappa} |t|$ as $t\uparrow 0$.
From this, it follows   that 
\[
|v(t,x)|^2 \rightharpoonup K\|Q\|_{L^2}^2 \delta_0 \quad \hbox{as $t\uparrow 0$.}
\]
Finally, since $\nabla v(t,x) = e^{-i\frac {|x|^2}{4|t|}}\left(\nabla w -  i \frac x{2|t|} w \right)(t,x)$,
and as $t\uparrow 0$,
\begin{align*}
& \frac {1}{|t|^2}\int  {|x|^2}  |w(t,x)|^2 dx 
\lesssim \left|\frac{\tilde \lambda(t)}{t}\right|^2
 \int  \Big|\sum_k \big[e^{i\tilde\Gamma_{k}} Q_{\tilde a} \big] \left(y-  \tilde z(t)\ek\right) 
+ \tilde\e\left(t, y \right)  \Big|^2 |y|^2 dy \lesssim 1,
\\ & 
\int |\nabla w(t,x)|^2 dx\sim K \|\nabla Q\|_{L^2}^2 |t|^{-2} |\log|t||^2,
\end{align*}
we obtain~\eqref{logmodifiy}. Note that $\int |x|^2 |v(t,x)|^2 \lesssim t^2$ implies by \eqref{globalvirial} that 
$\int |x|^2 |v(t,x)|^2 = t^2 E(v)$. Thus, $E(v)>0$.


\begin{thebibliography}{10}
\bibitem{An}
C. Antonini.
Lower bounds for the $L^2$ minimal periodic blow-up solutions of critical nonlinear Schr\"odinger equation.
{\em Differential Integral Equations} 15 (2002), no. 6, 749--768.

\bibitem{Ba}
V. Banica. 
Remarks on the blow-up for the Schr\"odinger equation with critical mass on a plane domain. 
{\em Ann. Sc. Norm. Super. Pisa Cl. Sci.} (5) 3 (2004), no. 1, 139--170.

\bibitem{BCD}
V. Banica, R. Carles and T. Duyckaerts. 
Minimal blow-up solutions to the mass-critical inhomogeneous NLS equation.
 {\em Comm. Partial Differential Equations} 36 (2011), no. 3, 487--531.

\bibitem{BL83} 
H. Berestycki and P.-L. Lions,
Nonlinear scalar field equations. 
 I. Existence of a ground state.
 {\em Arch. Rational Mech. Anal.} 82: 313--345, 1983.

\bibitem{Boulenger}
T.~Boulenger.
 Minimal mass blow up for NLS on a manifold.
{\em PhD dissertation}, Orsay 2012.

\bibitem{BW} 
J.~Bourgain and W.~Wang,
 Construction of blowup solutions for the nonlinear Schr\"odinger equation with critical nonlinearity.
{\em Ann. Sc. Norm. Super. Pisa Cl. Sci.} (4)  {25} (1997), no. 1-2, 197--215.

\bibitem{Ca03}
T.~Cazenave.
 {\em Semilinear {S}chr\"odinger equations}.
 New York University -- Courant Institute, New York, 2003.

\bibitem{CaWe90}
T.~Cazenave and F.~B. Weissler.
 The {C}auchy problem for the critical nonlinear {S}chr\"odinger
  equation in {$H^s$}.
 {\em Nonlinear Anal.}, 14(10):807--836, 1990.

\bibitem{ChGuNaTs07}
S.-M. Chang, S.~Gustafson, K.~Nakanishi and T.-P. Tsai.
 Spectra of linearized operators for {NLS} solitary waves.
 {\em SIAM J. Math. Anal.}, 39(4):1070--1111, 2007/08.

\bibitem{Collot}
C.~Collot.
 Type II blow up manifold for the energy super critical wave equation.
{\em arXiv:1407.4525}

\bibitem{CMM}
R.~C\^ote, Y.~ Martel and F.~Merle
 Construction of multi-soliton solutions for the $L^2$-supercritical gKdV and NLS equations.
{\em Rev. Mat. Iberoam.} 27 (2011), no. 1, 273--302. 

\bibitem{Do}
B.~Dodson.
  Global well-posedness and scattering for the mass critical nonlinear Schr\"odinger equation with mass below the mass of the ground state.
 arXiv:1104.1114.
 
\bibitem{DHKS}
R. Donninger, M. Huang, J. Krieger and W. Schlag.
Exotic blowup solutions for the $u^5$ focusing wave equation in $\RR^3$.
{\em Michigan Math. J.} 63 (2014), no. 3, 451--501. 

\bibitem{DKM}
 T.~Duyckaerts, C.E.~Kenig and F.~Merle.
  Profiles for bounded solutions of dispersive equations, with applications to energy-critical wave and Schr\"odinger equations. 
 {\em Commun. Pure Appl. Anal.} 14 (2015), no. 4, 1275--1326. 
 
\bibitem{Mdu}
T.~ Duyckaerts and F.~Merle.
 Dynamic of threshold solutions for energy-critical NLS.
{\em Geom. Funct. Anal.} 18 (2009), no. 6, 1787--1840. 

\bibitem{Fan} 
C.~Fan.
 Propagation of regularity and the localization of log-log blow up.
 {\em 	arXiv:1510.00961}

\bibitem{HV}
 M. A. Herrero and J. J. L. Vel\'azquez.
 Flat blow-up in one-dimensional semilinear heat equations. 
 {\em Differential Integral Equations} 5 (1992),  973--997. 

\bibitem{JJ}
J. Jendrej.
 Construction of type II blow-up solutions for the energy-critical wave equation in dimension~5.
 {\em  arXiv:1503.05024}

\bibitem{KTV}
R.~Killip and T.~Tao and M.~Visan.
 The cubic nonlinear Schr\"odinger equation in two dimensions with radial data.
{\em  J. Eur. Math. Soc.} 11 (2009), no. 6, 1203--1258.

\bibitem{KLR}
J.~Krieger, E.~Lenzmann and P.~Rapha\"el.
 Nondispersive solutions to the $L^2$-critical half-wave equation.
{\em Arch. Ration. Mech. Anal.} 209 (2013), no. 1, 61--129.

\bibitem{KMR}
J.~Krieger, Y.~Martel and P.~Rapha\"el.
 Two-soliton solutions to the three-dimensional gravitational Hartree equation.
{\em Comm. Pure Appl. Math.} 62 (2009), no. 11, 1501--1550. 

\bibitem{KS}
J.~ Krieger and W.~ Schlag.
 Non-generic blow-up solutions for the critical focusing NLS in 1-D.
 {\em J. Eur. Math. Soc.} 11 (2009), 1--125. 

\bibitem{KS2}
J.~ Krieger and W.~ Schlag.
Full range of blow up exponents for the quintic wave equation in three dimensions. 
{\em J. Math. Pures Appl.} (9) 101 (2014), no. 6, 873--900.

\bibitem{KST}
J.~ Krieger, W.~ Schlag and D.~Tataru.
 Renormalization and blow up for charge one equivariant critical wave maps.
{\em Invent. Math.} 171 (2008), no. 3, 543--615.

\bibitem{Kw89}
M.~K. Kwong.
 Uniqueness of positive solutions of {$\Delta u-u+u^p=0$} in {${\bf
  R}^n$}.
 {\em Arch. Rational Mech. Anal.}, 105(3):243--266, 1989.

\bibitem{LeCMR}
S. Le Coz, Y. Martel, P. Rapha\"el.
 Minimal mass blow up solutions for a double power nonlinear Schr\"odinger equations
 To appear in {\em Revista Matem\'atica Iberoamericana}.

\bibitem{Martelkdv}
Y.~Martel.
 Asymptotic $N$-soliton-like solutions of the subcritical and critical generalized Korteweg de Vries equations. 
{\em Amer. J. Math.} 127 (2005), no. 5, 1103--1140.

\bibitem{MMmin}
Y.~Martel and F.~Merle.
 Nonexistence of blow-up solution with minimal $L^2$-mass for the critical gKdV equation.
 {\it Duke Math. J.} 115 (2002), no. 2, 385--408.

\bibitem{MaMeinvent}
Y.~Martel and F.~Merle.
 Inelastic interaction of nearly equal solitons for the quartic gKdV equation.
{\em Invent. Math.} 183 (2011), no. 3, 563--648. 

\bibitem{MaMe02}
Y.~Martel and F.~Merle.
 Stability of blow up profile and lower bound on the blow up rate for the critical generalized KdV equation.
 {\em Ann. of Math.}, 127:235--280, 2002.

\bibitem{MMnls} 
Y.~Martel and F.~Merle.
  Multi-solitary waves for nonlinear Schr\"odinger equations.
 {\em   Annales de l'IHP (C) Non Linear Analysis}, 23 (2006), 849--864.

\bibitem{MMR1}
 Y.~Martel, F.~Merle and P.~Rapha\"el.
  Blow up for the critical generalized Korteweg de Vries equation. I: Dynamics near the soliton.
 {\em  Acta Math.} 212 (2014), no. 1, 59--140.
 
 \bibitem{MMRII}
 Y.~Martel, F.~Merle and P.~Rapha\"el.
  Blow up for the critical generalized Korteweg de Vries equation. II: Minimal mass dynamics.
   {\em J. of  Math. Eur. Soc.} 17, 1855--1925 (2015).
 
  \bibitem{MMRIII}
 Y.~Martel, F.~Merle and P.~Rapha\"el.
  Blow up for the critical generalized Korteweg de Vries equation. III: Exotic regimes.
  Annali della Scuola Normale Superiore de Pisa XIV, 575--631 (2015).
 
 \bibitem{MMT2} 
Y. Martel, F. Merle and T.-P. Tsai.
 Stability in $H^1$ of the sum of $K$ solitary waves for some nonlinear Schr\"odinger equations.
{\em Duke Math. J.}  {133} (2006), 405--466.

\bibitem{Mmulti}
F.~ Merle.
 Construction of solutions with exactly $k$ blow-up points for the Schr\"odinger equation with critical nonlinearity. {\em Comm. Math. Phys.} 129 (1990), no. 2, 223--240.
  
\bibitem{Me93} 
F.~Merle. 
 Determination of blow-up solutions with minimal mass for nonlinear Schr\"odinger equations with critical power.
 {\em Duke Math. J.} 9:427--454, 1993.

\bibitem{MeRa04}
F.~Merle and P.~Rapha\"el.
 On universality of blow-up profile for {$L^2$} critical nonlinear {S}chr\"odinger equation.
 {\em Invent. Math.}, 156(3):565--672, 2004.

\bibitem{MeRa05}
F.~Merle and P.~Rapha\"el.
 The blow-up dynamic and upper bound on the blow-up rate for critical
  nonlinear {S}chr\"odinger equation.
 {\em Ann. of Math. (2)}, 161(1):157--222, 2005.

\bibitem{MeRa05bis} 
F.~Merle and P.~Rapha\"el.
  Sharp upper bound on the blow up rate for critical nonlinear Schr\"odinger equation.
 {\em Geom. Funct. Anal.} 13 (2003), 591--642.

\bibitem{MeRa06}
F.~Merle and P.~Rapha\"el.
 On a sharp lower bound on the blow-up rate for the {$L^2$} critical
  nonlinear {S}chr\"odinger equation.
 {\em J. Amer. Math. Soc.}, 19(1):37--90 (electronic), 2006.

\bibitem{MeRa07} 
F.~Merle and P.~Rapha\"el.
 Profiles and quantization of the blow up mass for critical nonlinear Schr\"odinger equation.
 {\em Comm. Math. Phys.}  253  (2005),  no. 3, 675--704.

\bibitem{MRR}
F.~Merle, P.~Rapha\"el and I.~Rodnianski.
 Type II blow up for the energy supercritical NLS
 to appear in {\em Cambridge Math. Jour.}

\bibitem{MeRaSz13}
F. Merle, P.~Rapha\"el and J. Szeftel.
 The instability of Bourgain-Wang solutions for the $L^2$ critical NLS.
{\em  Amer. J. Math.} 135 (4), 967-1017, 2013.

\bibitem{MeRaSz14}
F. Merle, P.~Rapha\"el and J. Szeftel.
 On collapsing ring blow up solutions to the mass supercritical NLS.
{\em  Duke Math. J.} 163 (2), 369-431, 2014. 

\bibitem{MZ}
F.~ Merle and H.~ Zaag.
 On the stability of the notion of non-characteristic point and blow-up profile for semilinear wave equations.  {\em Comm. Math. Phys.} 333 (2015), no. 3, 1529--1562.
 
\bibitem{Miot}
E.~ Miot.
 Dynamique des points vortex dans une \'equation de Ginzburg-Landau complexe. (French)
 S\'eminaire: Equations aux D\'eriv\'ees Partielles. 2009--2010, Exp. No. XXI, 13 pp. Ecole Polytech., Palaiseau, 2012.

\bibitem{Mi03}
T.~Mizumachi.
 Weak interaction between solitary waves of the generalized KdV equations.
{\em SIAM J. Math. Anal.} 35 (2003), no. 4, 1042--1080

\bibitem{MPW}
M.~ Musso, F.~Pacard and J.~Wei.
 Finite-energy sign-changing solutions with dihedral symmetry for the stationary nonlinear Schr\"odinger equation. 
 {\em J. Eur. Math. Soc.} 14 (2012), no. 6, 1923--1953.
 
\bibitem{Nawa} H.~Nawa.
 Asymptotic and limiting profiles of blowup solutions of the nonlinear Schr\"odinger equation with critical power. 
 {\em Comm. Pure Appl. Math.} 52 (1999), no. 2, 193--270.

\bibitem{Pe1}
G. Perelman.
On the formation of singularities in solutions of the critical nonlinear Schr\"odinger equation. 
{\em Ann. Henri Poincar\'e} 2 (2001), no. 4, 605--673. 


\bibitem{Perelman}
G. Perelman.
Two soliton collision for nonlinear Schr\"odinger equations in dimension 1. 
{\em Ann. Inst. H. Poincar\'e, Anal. Non Lin\'eaire} 28 (2011) 357--384.

\bibitem{PlRa07}
F.~Planchon and P.~Rapha{\"e}l.
 Existence and stability of the log-log blow-up dynamics for the
  {$L^2$}-critical nonlinear {S}chr\"odinger equation in a domain.
 {\em Ann. Henri Poincar\'e}, 8(6):1177--1219, 2007.

\bibitem{Ra05} 
P.~Rapha\"el.
 Stability of the log-log bound for blow up solutions to the critical non linear Schr\"odinger equation. 
 {\em Math. Ann. } 331  (2005),  no. 3, 577--609.

\bibitem{RaSz11}
P.~Rapha\"el and J.~Szeftel.
 Existence and uniqueness of minimal blow-up solutions to an
  inhomogeneous mass critical {NLS}.
 {\em J. Amer. Math. Soc.}, 24(2):471--546, 2011.

\bibitem{We83}
M.~I. Weinstein.
 Nonlinear {S}chr\"odinger equations and sharp interpolation
  estimates.
 {\em Comm. Math. Phys.}, 87(4):567--576, 1982/83.

\bibitem{We85}
M.~I. Weinstein.
 Modulational stability of ground states of nonlinear Schr\"odinger equations.
 {\em SIAM J. Math. Anal.}, 16 (1985), 472--491.
\end{thebibliography}
\end{document}